\newcommand{\R}{\Bbb{R}}
\newcommand{\N}{\Bbb{N}}
\newcommand{\rt}{\rtimes}
\newcommand{\eps}{\varepsilon}
\newcommand{\iar}{\ar@{^{(}->}}
\newcommand{\Homeo}{\textup{Homeo}(\Omega)}
\newcommand{\HomN}{\textup{Homeo}(\N)}
\newcommand{\bHomeo}{\textup{Homeo}(\beta \Omega)}
\newcommand{\AutC}{\textup{Aut}^+_s(C_0(\Omega))}
\newcommand{\Aut}{\textup{Aut}^+}
\newcommand{\Auts}{\textup{Aut}^+_s}
\newcommand{\IAut}{\textup{IAut}^+}
\newcommand{\IAuts}{\textup{IAut}^+_s}
\newcommand{\ZAut}{\textup{ZAut}^+}
\newcommand{\ZAuts}{\textup{ZAut}^+_s}
\newcommand{\norm}[1]{\left\Vert #1 \right\Vert}
\newcommand{\comment}[1]{}
\newcommand{\mpos}{(\ell^\infty)^{++}}
\newcommand{\cbo}{C_b(\Omega)^{++}}
\newcommand{\coo}{C_0(\Omega)}
\newcommand{\sdp}{\ZAut(E) \rt \IAut(E)}
\newcommand{\gp}{\ZAut(E) \cdot \IAut(E)}
\newcommand{\gps}{\ZAuts(E) \cdot \IAuts(E)}
\newcommand{\dpsi}{\;d\psi}
\newcommand{\id}{\textup{id}}
\theoremstyle{plain}
\newtheorem{theorem}{Theorem}[section]
\newtheorem{lemma}[theorem]{Lemma}
\newtheorem{prop}[theorem]{Proposition}
\newtheorem{corol}[theorem]{Corollary}
\theoremstyle{definition}
\newtheorem{defn}[theorem]{Definition}
\newtheorem{assumption}[theorem]{Assumption}
\newtheorem{example}[theorem]{Example}
\theoremstyle{remark}
\numberwithin{equation}{section}
\begin{document}

\title[Compact groups of positive operators]
{{Compact groups of positive operators on Banach lattices}}
\author{Marcel de Jeu}
\email{mdejeu@math.leidenuniv.nl}
\address{Mathematical Institute,
Leiden University,
P.O. Box 9512,
2300 RA Leiden,
The Netherlands}
\author{Marten Wortel}
\email{marten.wortel@gmail.com}
\address{Mathematical Institute,
Leiden University,
P.O. Box 9512,
2300 RA Leiden,
The Netherlands}
\subjclass[2010]{Primary 22D12; Secondary 22C05, 46B42}
\keywords{Compact group, positive representation, lattice automorphism, isometric representation, Banach lattice}

\begin{abstract}

In this paper we study groups of positive operators on Banach lattices. If a certain factorization property holds for the elements of such a group, the group has a homomorphic image in the isometric positive operators which has the same invariant ideals as the original group. If the group is compact in the strong operator topology, it equals a group of isometric positive operators conjugated by a single central lattice automorphism, provided an additional technical assumption is satisfied, for which we have only examples. We obtain a characterization of positive representations of a group with compact image in the strong operator topology, and use this for normalized symmetric Banach sequence spaces to prove an ordered version of the decomposition theorem for unitary representations of compact groups. Applications concerning spaces of continuous functions are also considered.

\end{abstract}

\maketitle

\section{Introduction and overview}

In this paper we continue our efforts, initiated in \cite{finitegroups}, to develop a theory of strongly continuous positive representations of locally compact groups in Banach lattices. In \cite{finitegroups} we investigated positive representations of finite groups. We showed that a principal band irreducible positive representation of a finite group in a Riesz space is finite dimensional, and that the representation space is necessarily Archimedean. Furthermore, we classified such irreducible representation and showed that each Archimedean finite dimensional positive representations is an order direct sum of irreducible positive representations.  Here we go a step further and consider strongly continuous positive representations of compact groups in Banach lattices. Given the relative ease with which unitary representations of compact groups can be treated, this is the natural step to take and one would like to achieve a better understanding of issues related to irreducibility and decomposition in this context. Since the image of such a representation is a group of positive operators, we examine groups of positive operators, and since the image is compact in the strong operator topology, we are especially interested in compact (in the strong operator topology) groups of positive operators. In fact, most of the work in this paper is aimed at a better understanding of such compact groups. Once this is achieved, the transition to strongly continuous positive representations with compact image is not complicated.

There are not too many papers on groups of positive operators. In \cite{dyephillips}, uniformly bounded groups of positive operators on $C_c(\Omega)$ and $C_0(\Omega)$ are investigated in detail, where $\Omega$ is a locally compact Hausdorff space. These groups are studied using group actions on the underlying space $\Omega$ and group cohomology methods. Amongst others, it is shown in \cite[Example~4.1]{dyephillips} that a strongly continuous positive representation of a compact group on $C_0(\Omega)$ equals an isometric strongly continuous representation conjugated by a single central lattice automorphism, a result which we obtain as a special case of a more general statement; cf.\ Theorem~\ref{t:characterization_rep_into_E} below. 

In the case where the group $G$ is compact in the strong operator topology, which is the main focus of our paper, a basic  result is \cite[Theorem~III.10.4]{schaefer}. It was published in \cite{nagelwolff}, which in turn is based on unpublished lecture notes by H.P.\ Lotz. It gives information concerning the structure of $G$ as well as the Banach lattice $G$ acts on, under the additional assumption that the action has only trivial invariant closed ideals. Amongst others, it states that the pertinent lattice can be found between $C(G/H)$ and $L^1(G/H)$, for some closed subgroup $H$ of $G$, and the group acts as the group of left quasi-rotations induced by the natural action of $G$ on $G/H$. 

By studying the spectrum of lattice homomorphisms, \cite{schaeferwolffarendt} also contains some results about groups of positive operators, in particular it is shown in \cite[Corollary~3.10]{schaeferwolffarendt} that a uniformly bounded group of positive operators on a Banach lattice is discrete in the norm topology, a result we obtain in the special case of groups which are compact in the strong operator topology on certain sequence spaces, cf.\ Corollary~\ref{c:norm_discrete_seq_space} below.

Beyond these results not much seems to be known. Naturally, there is a theory of one-parameter semigroups of positive operators, see, e.g., \cite{semigroupsposoperators}, but we are not aware of issues of irreducibility or decomposition into irreducibles being considered in detail for such semigroups.

In this paper we study groups of positive operators, or equivalently, groups of lattice automorphisms, with the property that every element can be written as a product of a central lattice automorphism and an isometric lattice automorphism. Remarkably enough, in the Banach lattices we consider in this paper, \emph{every} lattice automorphism is such a product. However, there are Banach lattices for which this fails, cf.\ Example \ref{e:automorphisms_is_not_product}. The Banach lattices for which this holds true, as shown in this paper, include the normalized symmetric Banach sequence spaces (Section~\ref{s:sequence_spaces}) and spaces of continuous functions (Section~\ref{s:cont_functions}). Moreover, in these spaces we have a concrete description of both the central lattice automorphisms and the isometric lattice automorphisms. In the general case, for all groups with the aforementioned factorization property, we show that there is a group of isometric lattice automorphisms with the same invariant ideals as the original group, cf.\ Theorem~\ref{t:group_isom_invariant_bands}. This is applied to the Banach sequence spaces mentioned above, where the isometric lattice automorphisms are easily described as permutations operators, and without too much effort one thus obtains a decomposition of a positive representation of an arbitrary group in such a Banach sequence space into band irreducibles, cf.\ Theorem~\ref{t:compsplitirreducible}. This result is reminiscent of the familiar decomposition theorem for strongly continuous unitary representations of compact groups into finite dimensional irreducible representations, but here the representation need not be strongly continuous, the group need not be compact, and the (order) irreducibles can be infinite dimensional.

Suppose the original group of automorphisms with the above factorization property is compact in the strong operator topology. As a first thought, we can equip the Banach lattice $E$ with an equivalent lattice norm $||| \cdot |||$, defined by 
\[ |||x||| := \int_G \norm{Tx} \,dT \quad \forall x \in E, \]
where $dT$ denotes the Haar measure on the compact group $G$. Under this norm, the group $G$ is now easily seen to be a group of isometric lattice automorphisms. However, by changing the norm, the isometries change as well, and any nice description of the original isometries need not survive this transformation. Hence this does not seem useful. Instead, we impose an additional technical assumption (Assumption~\ref{a:a1}) on the Banach lattice. Under this assumption, which we show to hold for normalized symmetric Banach sequence spaces with order continuous norm and spaces of continuous function, we can actually show that such a compact group is isomorphic as a topological group with the aforementioned group of isometric lattice automorphisms with the same invariant ideals. Moreover, we can characterize such groups $G$: they are precisely the groups of the form $G = mHm^{-1}$, for a unique compact group $H$ of isometric lattice automorphisms, and a (non-unique) central lattice automorphism $m$, cf.\ Theorem~\ref{t:characterization_mult_subgroups}. This is especially useful whenever we have a nice description of the central lattice automorphisms and the isometric lattice automorphisms, as in the spaces mentioned above. Along the same lines, one can show that positive representations with compact image in such spaces are precisely the conjugates of isometric representations, cf.\ Theorem~\ref{t:characterization_rep_into_E}. Moreover, in the case that we have a positive representation in a normalized symmetric Banach sequence space with order continuous norm or in $\ell^\infty$ as in Theorem~\ref{t:compsplitirreducible}, and the positive representation has compact image, the irreducible bands are finite dimensional, so that the analogy with unitary representations of compact groups is then complete. For positive representations with compact image in spaces of continuous function, one cannot in general obtain such a direct sum type decomposition as in Theorem~\ref{t:compsplitirreducible}, and further research is necessary to see whether there is still a structure theorem for such representations in terms of band irreducible ones. As a preparation, we include a number of results on the invariant closed ideals, bands and projection bands for these representations.

\comment{

\bigskip
As mentioned above, in the spaces we consider it is always true that an arbitrary lattice automorphism is the product of a central and an isometric lattice automorphism. As will become clear in Section~\ref{s:abstract_characterization}, this is equivalent to saying that the automorphism group of the lattice is the semidirect product of the group of central lattice automorphisms and the group of isometric lattice automorphisms. The technical Assumption~\ref{a:a1} also holds in all cases under consideration, and as this paper shows, the combination of these properties certainly has its consequences for positive group representations in these spaces. It is too early to speak of conjectures, but it is an intriguing and relevant question whether there are counterexamples to the statement that both these properties actually hold in all Banach lattices.
}

\bigskip

The structure of this paper is as follows.

In Section~\ref{s:preliminaries} we introduce the basic notation and terminology. After establishing a few facts on groups of invertible operators and representations, we give a new proof of the fact that the center of a Banach lattice is isometrically algebra and lattice isomorphic to $C(K)$, for some compact Hausdorff space $K$. We also obtain some results on integrating strongly continuous center valued functions. In Section~\ref{s:abstract_characterization} we consider groups of lattice automorphisms for which every element is the product of a central lattice automorphism and an isometric lattice automorphism. We immediately obtain that there exists a group of isometric lattice automorphisms having the same invariant ideals as the original group. Then we state the technical Assumption~\ref{a:a1}, and under this assumption we are able to show one of our main results, Theorem~\ref{t:characterization_mult_subgroups}, which states that every group of lattice automorphisms with this factorization property, and which is compact in the strong operator topology, equals a group of isometric lattice automorphisms conjugated by a central lattice automorphism. Using similar ideas, it is shown in Section~\ref{s:abstract_representations} that positive representations with compact (in the strong operator topology) image are isometric positive representations conjugated with a central lattice automorphism. We then show that two positive representations with compact image are order equivalent if and only if their isometric parts are (isometrically) order equivalent. In Section~\ref{s:sequence_spaces} we define and examine normalized symmetric Banach sequence spaces. We show that all lattice automorphisms on such spaces are a product of a central lattice automorphism and an isometric lattice automorphism, and that, if the space has order continuous norm, the technical Assumption~\ref{a:a1} holds. Then we apply the results from Section~\ref{s:abstract_characterization} and Section~\ref{s:abstract_representations} to characterize compact groups of lattice automorphisms and positive representations with compact image. We also obtain Theorem~\ref{t:compsplitirreducible}, the aforementioned ordered version of the decomposition theorem for unitary representations of compact groups. Finally, in Section~\ref{s:cont_functions}, we examine the Banach lattice $C_0(\Omega)$ for locally compact Hausdorff spaces $\Omega$. Again we show that all lattice automorphisms are a product of a central lattice automorphism and an isometric lattice automorphism, and that Assumption~\ref{a:a1} holds, and we apply the results from Section~\ref{s:abstract_characterization} and Section~\ref{s:abstract_representations} to characterize compact groups of lattice automorphisms and positive representations with compact image. We finish with Proposition~\ref{p:invariance_coo}, which characterizes invariant closed ideals, bands and projection bands of positive representations with compact image.

\section{Preliminaries}\label{s:preliminaries}

In this section we discuss various facts concerning the strong operator topology, groups of invertible operators, positive representations, the center of a Banach lattice, and integration of strongly continuous center valued functions.

If $X$ is a Banach space, then $\mathcal{L}(X)$ denotes the bounded operators on $X$, and this space equipped with the strong operator topology will be denoted by $\mathcal{L}_s(X)$. Subsets of $\mathcal{L}_s(X)$ are always assumed to be equipped with the strong operator topology. It follows from the principle of uniform boundedness that compact subsets of $\mathcal{L}_s(X)$ are uniformly bounded. In this topology multiplication is separately continuous, and the multiplication is simultaneously continuous when the first variable is restricted to uniformly bounded subsets. The next lemma is concerned with the continuity of the inverse.

\begin{lemma}\label{l:group_is_top_group}
 Let $X$ be a Banach space and $H \subset \mathcal{L}_s(X)$ be a set of invertible operators such that $H^{-1}$ is uniformly bounded. Then taking the inverse in $H$ is continuous.
\end{lemma}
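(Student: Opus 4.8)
The only hypothesis with any content is that $H^{-1}$ is uniformly bounded, so fix $M>0$ with $\norm{h^{-1}}\le M$ for all $h\in H$. Since $\mathcal{L}_s(X)$ is not metrizable in general, I would phrase the continuity of $h\mapsto h^{-1}$ in terms of nets: let $(h_\alpha)$ be a net in $H$ converging to some $h\in H$ in the strong operator topology, and show that $h_\alpha^{-1}\to h^{-1}$ in the strong operator topology, i.e.\ that $h_\alpha^{-1}x\to h^{-1}x$ for every $x\in X$.

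The key algebraic step is the resolvent-type identity
\[
 h_\alpha^{-1}-h^{-1}=h_\alpha^{-1}(h-h_\alpha)h^{-1},
\]
which holds for any two invertible operators. Fixing $x\in X$ and setting $y:=h^{-1}x$, this gives
\[
 h_\alpha^{-1}x-h^{-1}x=h_\alpha^{-1}\bigl(hy-h_\alpha y\bigr).
\]
Now $h_\alpha\to h$ in the strong operator topology means exactly that $h_\alpha y\to hy$, so $\norm{hy-h_\alpha y}\to 0$; and since $h_\alpha\in H$ we have $h_\alpha^{-1}\in H^{-1}$, hence $\norm{h_\alpha^{-1}}\le M$. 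Therefore
\[
 \norm{h_\alpha^{-1}x-h^{-1}x}\le M\,\norm{hy-h_\alpha y}\longrightarrow 0,
\]
which is what we wanted. Running this for each $x\in X$ shows $h_\alpha^{-1}\to h^{-1}$ in $\mathcal{L}_s(X)$, so inversion on $H$ is continuous.

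There is essentially no serious obstacle here; the whole argument rests on combining the identity above with the \emph{uniform} bound on $H^{-1}$, which is precisely what lets us pass from the pointwise convergence $h_\alpha y\to hy$ to the pointwise convergence of the inverses without the unbounded operators $h_\alpha^{-1}$ spoiling the estimate. (Without uniform boundedness of $H^{-1}$ the conclusion genuinely fails, so this hypothesis is the crux rather than a technicality.) If one prefers to avoid nets, the same computation works verbatim with a basic strong-operator-topology neighbourhood of $h$ of the form $\{g:\norm{(g-h)y}<\eps\}$ in place of the net, choosing $\eps$ depending on $M$ and the target accuracy.
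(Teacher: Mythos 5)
Your argument is correct and coincides with the paper's own proof: both set $y=h^{-1}x$ (the paper writes $x=Ty$) and use the identity $h_\alpha^{-1}x-h^{-1}x=h_\alpha^{-1}(hy-h_\alpha y)$ together with the uniform bound on $H^{-1}$ to conclude. No differences worth noting.
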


\begin{proof}
 Let $M > 0$ satisfy $\norm{T^{-1}} \leq M$ for all $T \in H$, and let $(T_i)$ be a net in $H$ that converges strongly to $T \in H$. Let $x \in X$, then $x = Ty$ for some $y \in X$, and by the strong convergence of $T_i$ to $T$,
\[ \norm{T_i^{-1}x - T^{-1}x} = \norm{T_i^{-1} (Ty - T_i y)} \leq M \norm{Ty - T_i y} \to 0. \]
\end{proof}

\begin{corol}\label{c:group_is_top_group}
 Let $X$ be a Banach space and let $H \subset \mathcal{L}_s(X)$ be a compact set and a group of invertible operators. Then $H$ is a compact topological group.
\end{corol}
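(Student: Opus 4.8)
The plan is to combine the preceding lemma with the basic continuity properties of multiplication in the strong operator topology recorded in the introductory paragraph of this section. First I would observe that since $H$ is compact in $\mathcal{L}_s(X)$, the principle of uniform boundedness (as noted above) gives a uniform bound $\norm{T} \leq M$ for all $T \in H$. I then need a uniform bound on $H^{-1}$ as well; the natural way is to note that $H^{-1}$ is also a compact subset of $\mathcal{L}_s(X)$, because inversion maps $H$ into $H$ bijectively and $H$ is a group, so $H^{-1} = H$ is compact, hence uniformly bounded. (One does have to be a little careful here: compactness of $H$ in the strong operator topology does not a priori imply continuity of inversion, which is exactly what we are trying to prove; but we do not need continuity of inversion to conclude that the \emph{set} $H^{-1}$ equals the \emph{set} $H$ and is therefore uniformly bounded.) With the uniform bound on $H^{-1}$ in hand, Lemma~\ref{l:group_is_top_group} applies directly and shows that inversion on $H$ is continuous.

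Next I would verify that multiplication $H \times H \to H$ is continuous, where $H \times H$ carries the product topology. Here I invoke the stated fact that multiplication on $\mathcal{L}_s(X)$ is simultaneously continuous once the first variable is restricted to a uniformly bounded set. Since $H$ is uniformly bounded, multiplication restricted to $H \times H$ is continuous: given nets $S_i \to S$ and $T_i \to T$ in $H$, write $S_i T_i x - STx = S_i(T_i x - Tx) + (S_i - S)Tx$, bound the first term by $M\norm{T_i x - Tx} \to 0$ using the uniform bound on $H$, and note the second term tends to $0$ by strong convergence of $S_i$ to $S$ applied to the fixed vector $Tx$.

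Finally, combining continuity of multiplication and continuity of inversion, $H$ is a topological group, and it is compact by hypothesis, so $H$ is a compact topological group. The only mild subtlety, and the point I would be most careful to state cleanly, is the argument in the first paragraph that $H^{-1}$ is uniformly bounded: it rests on the set-theoretic identity $H^{-1} = H$ for a group together with the uniform boundedness of the compact set $H$, and does not require the (not-yet-established) continuity of inversion. Everything else is a routine assembly of the facts already recorded in this section.
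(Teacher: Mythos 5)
Your argument is correct and is essentially the paper's proof: compactness gives uniform boundedness of $H$, the set identity $H^{-1}=H$ (no continuity of inversion needed) transfers that bound to $H^{-1}$ so that Lemma~\ref{l:group_is_top_group} applies, and the simultaneous continuity of multiplication with the first variable in a uniformly bounded set handles the product. The paper states this more tersely, but the route is the same.
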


\begin{proof}
 Compact subsets of $\mathcal{L}_s(X)$ are uniformly bounded, so the corollary follows from Lemma \ref{l:group_is_top_group} and the simultaneous continuity of multiplication on uniformly bounded subsets of $\mathcal{L}_s(X)$.
\end{proof}

As a consequence, the group $H$ in Corollary~\ref{c:group_is_top_group} has an invariant measure, a fact which will be instrumental in the proof of the key Lemma~\ref{l:vectint} below.

We continue with another lemma involving the strong operator topology, to be used in Lemma \ref{l:concon}.

\begin{lemma}\label{l:projections_cont}
 Let $X$ be a Banach space, and let $A, B \subset \mathcal{L}_s(X)$ be equipped with the strong operator topology, such that $T_1 S_1 = T_2 S_2$ if and only if $T_1 = T_2$ and $S_1 = S_2$, for all $T_1, T_2 \in A$ and $S_1, S_2 \in B$. Define $p_A \colon A \cdot B \to A$ and $p_B \colon A \cdot B \to B$ by $p_A(TS) := T$ and $p_B(TS) = S$, for $TS \in A \cdot B$. Let $C \subset A \cdot B$ be a subset such that $p_A(C)$ is uniformly bounded and all elements of $p_B(C)$ are surjective. Then, if $p_B$ restricted to $C$ is continuous, $p_A$ restricted to $C$ is continuous as well.
\end{lemma}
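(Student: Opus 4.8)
The plan is to work directly with nets and exploit the algebraic rigidity hypothesis together with the surjectivity assumption. Let $(T_i S_i)$ be a net in $C$ converging (in the strong operator topology) to $TS \in C$, with $T_i = p_A(T_i S_i)$, $S_i = p_B(T_i S_i)$, $T = p_A(TS)$, $S = p_B(TS)$. By hypothesis $p_B$ restricted to $C$ is continuous, so $S_i \to S$ strongly. We must show $T_i \to T$ strongly, i.e.\ $T_i x \to T x$ for every $x \in X$. Since $p_A(C)$ is uniformly bounded, say $\norm{T_i} \le M$ for all $i$, it suffices to prove convergence on a set whose linear span is dense — and the natural such set is $S(X)$, the range of $S$, which equals all of $X$ because $S \in p_B(C)$ is surjective. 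So in fact it suffices to show $T_i(Sy) \to T(Sy)$ for each $y \in X$.

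**The key computation.** Fix $y \in X$ and write $x = Sy$. Then
\[
T_i x - T x = T_i S y - T S y = (T_i S_i) y - (T S) y + T_i(S y - S_i y).
\]
The first two terms together give $(T_i S_i) y - (TS) y \to 0$ by the assumed strong convergence of $T_i S_i$ to $TS$. For the last term, $\norm{T_i(Sy - S_i y)} \le M \norm{Sy - S_i y} \to 0$, using the uniform bound $M$ on $p_A(C)$ and the strong convergence $S_i \to S$ supplied by continuity of $p_B|_C$. Hence $T_i x \to Tx$ for all $x$ in the range of $S$, i.e.\ for all $x \in X$, which is exactly strong convergence $T_i \to T$. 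This shows $p_A|_C$ is continuous.

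**Remarks on the structure.** The uniqueness of factorization in the hypothesis (the ``if and only if'' condition on $T_1 S_1 = T_2 S_2$) is what makes $p_A$ and $p_B$ well defined as maps on $A \cdot B$; it is not otherwise used in the continuity argument. The two substantive ingredients are (i) that $p_A(C)$ is uniformly bounded, which lets us absorb the error term $T_i(Sy - S_i y)$, and (ii) that the elements of $p_B(C)$ are surjective, which lets us reduce testing strong convergence of $(T_i)$ to vectors of the special form $Sy$; without surjectivity one would only control $T_i$ on the (possibly non-dense) range of $S$. I do not anticipate a genuine obstacle here: once the decomposition $T_i S y - T S y = \bigl((T_iS_i)y - (TS)y\bigr) + T_i(Sy - S_i y)$ is written down, both pieces are handled by the respective hypotheses, and the only mild subtlety is remembering that one must test on the range of $S$ rather than on all of $X$ directly, which is precisely where surjectivity enters.
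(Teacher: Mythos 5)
Your proof is correct and follows essentially the same route as the paper's: writing $x = Sy$ via surjectivity of $S$, splitting $T_i Sy - TSy$ into $T_i(Sy - S_iy)$ plus $(T_iS_i)y - (TS)y$, and controlling the first term by the uniform bound on $p_A(C)$ together with continuity of $p_B|_C$ and the second by the strong convergence of the net in $C$. The aside about dense linear span is unnecessary (the range of $S$ is already all of $X$) but harmless.
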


\begin{proof}
 Let $M > 0$ satisfy $\norm{T} \leq M$ for all $T \in A$ and $S \in B$ with $TS \in C$. Suppose $p_B$ restricted to $C$ is continuous, and let $(T_i S_i)$ be a net in $C$ that converges strongly to $TS \in C$, where $(T_i)$ is a net in $A$ and $T \in A$, and $(S_i)$ is a net in $B$ and $S \in B$. Let $x \in X$, then $x = Sy$ for some $y \in X$, and
\begin{align*}
 \norm{T_i x - Tx} &= \norm{T_i Sy - TSy} \\
&\leq \norm{T_i Sy - T_i S_i y} + \norm{T_i S_i y - TSy} \\
&\leq M \norm{Sy - S_i y} + \norm{T_i S_i y - TSy},
\end{align*}
which converges to zero by the continuity of $p_B$ and the strong convergence of $(T_i S_i)$ to $TS$.
\end{proof}

Let $E$ be a (real) Banach lattice. Being regular operators on a Banach lattice, lattice automorphisms of $E$ are automatically bounded, and the group of lattice automorphisms of $E$ equipped with the strong operator topology will be denoted by $\Aut(E)$. The subgroup of isometric lattice automorphisms is denoted by $\IAut(E)$. Equipped with the strong operator topology, we will denote these spaces by $\Auts(E)$ and $\IAuts(E)$, and subsets of $\Auts(E)$ and $\IAuts(E)$ are always assumed to have the strong operator topology.

\begin{defn}
 Let $G$ be a group and $E$ a Banach lattice. A \emph{positive representation} of $G$ in $E$ is a group homomorphism $\rho \colon G \to \Aut(E)$.
\end{defn}
For typographical reasons, we will write $\rho_s$ instead of $\rho(s)$, for $s \in G$.

Suppose $\rho \colon G \to \Aut(E)$ and $\theta \colon G \to \Aut(F)$ are positive representations in the Banach lattices $E$ and $F$. A positive operator $T \colon E \to F$ is called a \emph{positive intertwiner} of $\rho$ and $\theta$ if $T \rho_s = \theta_s T$ for all $s \in G$, and $\rho$ and $\theta$ are called \emph{order equivalent} if there exists a positive intertwiner of $\rho$ and $\theta$ which is a lattice automorphism. We call them isometrically order equivalent if there exists an intertwiner in $\IAut(E)$.

We call a positive representation $\rho$ of $G$ in $E$ \emph{band irreducible} if the only $\rho$-invariant bands are $\{0\}$ and $E$. Projection band irreducibility, closed ideal irreducibility, etc., are defined similarly.

In this paper we are, amongst others, concerned with subgroups of $\Auts(E)$. By the above, $\Auts(E)$ is a group with a topology such that the multiplication is separately continuous. We present a useful lemma about such groups, which can also be found in \cite[Lemma~2.4]{crossedproducts}.

\begin{lemma}\label{l:septopgrp}
 Let $G$ and $H$ be two groups with a topology such that right multiplication is continuous in both groups, or such that left multiplication is continuous in both groups. Let $\phi \colon G \to H$ be a homomorphism. Then $\phi$ is continuous if and only if it is continuous at $e$.
\end{lemma}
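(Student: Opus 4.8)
The plan is to establish the non-trivial implication: that continuity of $\phi$ at $e$ forces continuity of $\phi$ everywhere. The converse is immediate from the definition of continuity. I will argue under the hypothesis that right multiplication is continuous in both $G$ and $H$; the case of left multiplication is completely symmetric, or may be reduced to the first case by passing to the opposite groups.

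The key preliminary observation is that, under this hypothesis, for each group element $a$ the right translation $x \mapsto xa$ is a homeomorphism of the group onto itself: it is continuous by assumption, and its inverse is right translation by $a^{-1}$, which is continuous for the same reason. Now fix $g \in G$, and let $V$ be a neighborhood of $\phi(g)$ in $H$. Since right translation by $\phi(g)^{-1}$ is a homeomorphism of $H$, the set $V\phi(g)^{-1}$ is a neighborhood of $\phi(g)\phi(g)^{-1} = e = \phi(e)$ in $H$, using that $\phi$ is a homomorphism. By continuity of $\phi$ at $e$, there is a neighborhood $U$ of $e$ in $G$ with $\phi(U) \subseteq V\phi(g)^{-1}$. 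Then $Ug$ is a neighborhood of $g$, again because right translation by $g$ is a homeomorphism of $G$, and $\phi(Ug) = \phi(U)\phi(g) \subseteq V\phi(g)^{-1}\phi(g) = V$. Hence $\phi$ is continuous at $g$; as $g$ was arbitrary, $\phi$ is continuous.

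I do not expect any real obstacle here. The only point demanding a little care is that continuity of translation is assumed on one side only, so the argument must consistently use right translations (in the first case): we never invoke continuity of inversion or of left translation, and Hausdorffness is not needed. Equivalently, one may phrase the proof with nets: if $g_i \to g$ in $G$ then $g_i g^{-1} \to e$ by continuity of right multiplication, hence $\phi(g_i)\phi(g)^{-1} = \phi(g_i g^{-1}) \to \phi(e) = e$ by continuity of $\phi$ at $e$, and right multiplication by $\phi(g)$ in $H$ then yields $\phi(g_i) = \bigl(\phi(g_i)\phi(g)^{-1}\bigr)\phi(g) \to \phi(g)$.
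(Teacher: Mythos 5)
Your proof is correct and follows essentially the same route as the paper: the paper argues with nets, writing $\phi(r_i)=\phi(r_i r^{-1})\phi(r)$ and using only right translations, which is exactly the net reformulation you give at the end, and your neighborhood version is just the same idea phrased via the homeomorphism property of right translations. No gaps; nothing further needed.
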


\begin{proof}
Assume that right multiplication is continuous in both groups. Let $\phi$ be a homomorphism which is continuous at $e$ and let $(r_i)$ be a net in $G$  converging to $r \in G$. Then $r_i r^{-1} \to e$ by the continuity of right multiplication by $r^{-1}$ in $G$, and so
\[ \phi(r_i) = \phi(r_i r^{-1}) \phi(r) \to \phi(r), \]
where the continuity of right multiplication by $\phi(r)$ in $H$ is used in the last step. The case of continuous left multiplication is proved similarly, writing $\phi(r_i) = \phi(r) \phi(r^{-1}r_i)$.
\end{proof}

We continue by examining the center $Z(E)$ of a Banach lattice $E$, which, as in \cite[Definition~3.1.1]{meyernieberg}, is defined to be the set of regular operators $m$ on $E$ satisfying $-\lambda I \leq m \leq \lambda I$ for some $\lambda \geq 0$. With $Z_s(E)$ we denote $Z(E)$ with the strong operator topology. Central operators are often multiplication operators in concrete examples, e.g., if $1 \leq p \leq \infty$ and $(\Sigma, \mu)$ is a finite measure space, then each central operators $m$ on $L^p(\mu)$ is a multiplication operator by an element of $L^\infty(\mu)$, cf.\ the example following \cite[Definition~3.1.1]{meyernieberg}, and this is why we use the notation $m$ for these operators. The center of a Banach lattice is in all respects isomorphic to a space of continuous function, which is the context of the next proposition. For its proof and that of Corollary \ref{c:pos_center_determines_center}, we recall some terminology. If $E$ is a Banach lattice, then Orth$(E)$ denotes the orthomorphisms of $E$, i.e, the order bounded band preserving operators (\cite[Definition~2.41]{posoperators}). An $f$-algebra is a Riesz space $E$ equipped with a multiplication turning $E$ into an associative algebra, such that if $x, y \in E^+$, then $xy \in E^+$, and if $x \wedge y = 0$, then $xz \wedge y = zx \wedge y = 0$ for all $z \in E^+$ (\cite[Definition~2.53]{posoperators}).

\begin{prop}\label{p:center_is_cont_functions}
 Let $E$ be a Banach lattice. Then the center $Z(E)$ equipped with the operator norm is isometrically lattice and algebra isomorphic to the space $C(K)$ for some compact Hausdorff space $K$, such that the identity operator $I$ is identified with the constant function $\mathbf{1}$.
\end{prop}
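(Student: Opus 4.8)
The plan is to recognise $Z(E)$, equipped with its operator norm, as an AM-space with order unit $I$ that carries a compatible $f$-algebra structure, and then to combine Kakutani's representation theorem for such spaces with the rigidity of the $f$-algebra multiplication on a $C(K)$-space.

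First I would identify $Z(E)$ with the principal order ideal of Orth$(E)$ generated by $I$: if $m \in \mathcal{L}(E)$ satisfies $-\lambda I \le m \le \lambda I$, then for disjoint $x, y \in E^+$ we get $|mx| \wedge y \le \lambda x \wedge y = 0$, so $m$ is band preserving, and being order bounded it lies in Orth$(E)$; conversely the relation $|m| \le \lambda I$ inside the Riesz space Orth$(E)$ is just $-\lambda I \le m \le \lambda I$. Since Orth$(E)$ is a commutative, unital, Archimedean $f$-algebra under composition with unit $I$ (see \cite{posoperators}), and since the principal ideal generated by the unit of an $f$-algebra is automatically a Riesz subspace and a subalgebra (from $|m_i| \le \lambda_i I$ one obtains $|m_1 m_2| \le \lambda_1 \lambda_2 I$), it follows that $Z(E)$ is itself a commutative, unital, Archimedean $f$-algebra with $I$ as an order unit, with all lattice and algebraic operations inherited from Orth$(E)$.

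Next I would prove that the operator norm on $Z(E)$ coincides with the order-unit norm $\|m\|_I := \inf\{\lambda \ge 0 : |m| \le \lambda I\}$. The estimate $\|m\| \le \|m\|_I$ is clear, since $|m| \le \lambda I$ forces $|mx| \le \lambda|x|$ and hence $\|mx\| \le \lambda\|x\|$. For the reverse inequality, using the identity $|m(x)| = |m|(|x|)$ valid for orthomorphisms one reduces to the case $m \ge 0$; then for $\lambda < \|m\|_I$ the positive operator $\pi := (m - \lambda I)^+$ is nonzero, so $\pi x \ne 0$ for some $x \in E^+$, and since disjoint elements of an $f$-algebra multiply to zero we have $\pi \cdot (m - \lambda I) = \pi^2 \ge 0$, so together with the commutativity of $Z(E)$, $m(\pi x) = \pi(mx) = \pi^2 x + \lambda \pi x \ge \lambda \pi x \ge 0$, whence $\|m\| \ge \lambda$; letting $\lambda \uparrow \|m\|_I$ gives $\|m\| \ge \|m\|_I$. (Alternatively this norm coincidence may simply be quoted, e.g.\ from \cite{meyernieberg}.) Consequently $Z(E)$ is a Banach lattice, norm-closed in $\mathcal{L}(E)$, whose norm is the order-unit norm associated to $I$; since such a norm is automatically an M-norm, $Z(E)$ is an AM-space with unit $I$.

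Finally, Kakutani's representation theorem for AM-spaces with unit supplies a compact Hausdorff space $K$ and a surjective isometric lattice isomorphism $\Phi \colon Z(E) \to C(K)$ with $\Phi(I) = \mathbf{1}$, and it only remains to check that $\Phi$ respects multiplication. Transporting the composition product of $Z(E)$ through $\Phi$ yields an $f$-algebra multiplication $\ast$ on $C(K)$ with unit $\mathbf{1}$, and for fixed $f \in C(K)$ the map $g \mapsto f \ast g$ is band preserving (by the $f$-algebra axiom) and order bounded, hence an orthomorphism of $C(K)$; since every orthomorphism of $C(K)$ is pointwise multiplication by an element of $C(K)$ (\cite{posoperators}), this map must be pointwise multiplication by $f \ast \mathbf{1} = f$, so $\ast$ is pointwise multiplication and $\Phi$ is an algebra isomorphism. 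The step I expect to be the main obstacle is the norm identification in the third paragraph — showing that $Z(E)$ is genuinely an AM-space and not merely an order-unit space; the rest is bookkeeping inside Orth$(E)$, a direct appeal to Kakutani's theorem, and the known description of Orth$(C(K))$.
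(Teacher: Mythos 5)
Your proof is correct, and its skeleton is the same as the paper's: identify the operator norm on $Z(E)$ with the order-unit norm for the unit $I$, conclude that $Z(E)$ is an AM-space with unit, apply Kakutani's theorem to get an isometric lattice isomorphism onto some $C(K)$ with $I \mapsto \mathbf{1}$, and then upgrade this to an algebra isomorphism using the $f$-algebra structure. The differences are in how the ingredients are handled: the paper simply cites Meyer--Nieberg for the equality of the operator norm with the order-unit norm (Theorem~3.1.11) and for $Z(E)=\textup{Orth}(E)$ (Theorem~3.1.12), and settles the multiplicative step by quoting the uniqueness of the multiplication on an Archimedean $f$-algebra with prescribed positive unit (Aliprantis--Burkinshaw, Theorem~2.58); you instead prove the norm identity directly (your computation with $\pi=(m-\lambda I)^+$, $\pi(m-\lambda I)=\pi^2$ and $m(\pi x)\ge\lambda\,\pi x$ is sound, granted the standard facts $|m(x)|=|m|(|x|)$ and commutativity of orthomorphisms), work only with the principal ideal of $\textup{Orth}(E)$ generated by $I$ rather than the full equality $Z(E)=\textup{Orth}(E)$, and replace the abstract uniqueness theorem by transporting the product to $C(K)$ and invoking $\textup{Orth}(C(K))\cong C(K)$ -- which is in effect a hands-on proof of that uniqueness in the $C(K)$ case. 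Your route is more self-contained and makes visible why the multiplication is forced to be pointwise; the paper's route is shorter because it leans on the cited structure theory, and its use of $Z(E)=\textup{Orth}(E)$ is also needed elsewhere (e.g.\ in Corollary~\ref{c:pos_center_determines_center}).
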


\begin{proof}
 By \cite[Theorem~3.1.11]{meyernieberg}, the operator norm of $m \in Z(E)$ is the least $\lambda \geq 0$ such that $-\lambda I \leq m \leq \lambda I$, i.e., it equals the order unit norm corresponding to the order unit $I$, and so by \cite[Proposition~1.2.13]{meyernieberg} $Z(E)$ is an $M$-space with order unit $I$. Then the well-known Kakutani Theorem (\cite[Theorem~2.1.3]{meyernieberg}) yields an isometric lattice isomorphism of $Z(E)$ with a $C(K)$ space such that $I$ corresponds to $\mathbf{1}$. Moreover, by \cite[Theorem~3.1.12(ii)]{meyernieberg} $Z(E) = \mbox{Orth}(E)$, which is an Archimedean $f$-algebra by \cite[Theorem~2.59]{posoperators}. Clearly $C(K)$ is an Archimedean $f$-algebra with unit $\mathbf{1}$. By \cite[Theorem~2.58]{posoperators} the $f$-algebra structure on an Archimedean $f$-algebra is unique, given the positive multiplicative unit, and this implies that the correspondence between $Z(E)$ and $C(K)$ must be an algebra isomorphism.
\end{proof}

This proposition is stated in \cite[Proposition~1.4]{schaeferwolffarendt}, where a reference to \cite{hackenbroch} is given for the proof. The development of the theory since the appearance of \cite{hackenbroch} enables us to give a proof as above.

If $E$ is a Banach lattice, then $\ZAut(E)$ denotes the set of central lattice automorphisms, i.e., $\ZAut(E) = \Aut(E) \cap Z(E)$. Note that $\ZAut(E)$ does not denote the center (in the sense of groups) of $\Aut(E)$! As before, $\ZAuts(E)$ denotes $\ZAut(E)$ equipped with the strong operator topology. In the following corollary we collect a few properties of $\ZAut(E)$ as they follow from the isomorphism in Proposition \ref{p:center_is_cont_functions}. If $K$ is a compact Hausdorff space, then $C(K)^{++}$ denotes the strictly positive functions in $C(K)$, or equivalently, the positive multiplicatively invertible elements of $C(K)$.

\begin{corol}\label{c:pos_center_determines_center}
 Let $E$ be a Banach lattice. Under the isomorphism $Z(E) \cong C(K)$ from Proposition \ref{p:center_is_cont_functions}, we have $\ZAut(E) \cong C(K)^{++}$. Consequently, $\ZAut(E)$ is a group, and
\[ Z(E) = \ZAut(E) - \R^+ \cdot I = \ZAut(E) - \ZAut(E). \]
\end{corol}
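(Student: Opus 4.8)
The plan is to push the statement through the isometric lattice-and-algebra isomorphism $Z(E)\cong C(K)$ of Proposition~\ref{p:center_is_cont_functions}, under which it becomes essentially formal, with one genuine exception: that the center is inverse-closed in $\mathcal{L}(E)$. That is the step I expect to be the main obstacle.

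To prove $\ZAut(E)\cong C(K)^{++}$, first take $f\in C(K)^{++}$ and let $m\in Z(E)$ be the corresponding operator. Then $1/f\in C(K)^{++}$ corresponds to some $n\in Z(E)$, and since the isomorphism is an algebra isomorphism taking $\mathbf{1}$ to $I$ we get $mn=nm=I$ as operators on $E$, so $m$ is invertible with $m^{-1}=n$; since the isomorphism is also a lattice isomorphism, $m\ge 0$ and $n\ge 0$. A bijective positive operator with positive inverse is automatically a lattice isomorphism, because $m(x\vee y)\ge mx\vee my$ trivially, and applying first $n=m^{-1}$ and then $m$ to $mx\vee my\ge mx,\,my$ gives $mx\vee my\ge m(x\vee y)$; hence $m\in\Aut(E)\cap Z(E)=\ZAut(E)$. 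Conversely, let $m\in\ZAut(E)$ correspond to $f\in C(K)$; since $m$ is a lattice automorphism it is positive, so $f\ge 0$, and $m^{-1}$ is again a positive lattice automorphism. The key step is to show $m^{-1}\in Z(E)$. Using $Z(E)=\mathrm{Orth}(E)$ (\cite[Theorem~3.1.12(ii)]{meyernieberg}), $m$ is order bounded and band preserving, so for every $y\in E$ the element $y=m(m^{-1}y)$ lies in the band generated by $m^{-1}y$; moreover $m^{-1}$, being a lattice homomorphism, preserves disjointness, so $x\perp y$ implies $m^{-1}x\perp m^{-1}y$, hence $m^{-1}x$ is disjoint from the band generated by $m^{-1}y$ and therefore from $y$. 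Thus $m^{-1}$ is band preserving, and being positive it is also order bounded, so $m^{-1}\in\mathrm{Orth}(E)=Z(E)$. Consequently $m$ is invertible in the algebra $Z(E)$, so $f$ is multiplicatively invertible in $C(K)$; together with $f\ge 0$ this gives $f\in C(K)^{++}$. Since $C(K)^{++}$ is a group under multiplication and the isomorphism is an algebra isomorphism, it follows that $\ZAut(E)$ is a group.

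For the two identities I would argue inside $C(K)$. Given $f\in C(K)$, set $c:=\norm{f}_\infty+1$; then $f+c\mathbf{1}\ge\mathbf{1}$, so $f+c\mathbf{1}\in C(K)^{++}$, while $c\mathbf{1}\in C(K)^{++}$ as well and $c\mathbf{1}\in\R^+\cdot\mathbf{1}$. Hence the identity $f=(f+c\mathbf{1})-c\mathbf{1}$ shows $C(K)\subseteq C(K)^{++}-\R^+\cdot\mathbf{1}\subseteq C(K)^{++}-C(K)^{++}$, and the reverse inclusions are immediate because $C(K)$ is a linear subspace containing both $\R^+\cdot\mathbf{1}$ and $C(K)^{++}$. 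Transporting these equalities back through the isomorphism, which sends $I$ to $\mathbf{1}$ and hence $\R^+\cdot I$ to $\R^+\cdot\mathbf{1}$, yields $Z(E)=\ZAut(E)-\R^+\cdot I=\ZAut(E)-\ZAut(E)$.
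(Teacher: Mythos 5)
Your proposal is correct, and its overall route is the same as the paper's: transport everything through the isometric lattice-and-algebra isomorphism $Z(E)\cong C(K)$ of Proposition~\ref{p:center_is_cont_functions}, where the forward direction, the group property, and the decomposition $C(K)=C(K)^{++}-\R^+\cdot\mathbf{1}=C(K)^{++}-C(K)^{++}$ are formal. The one substantive difference is the step you correctly singled out as the obstacle: showing that for $m\in\ZAut(E)$ the inverse $m^{-1}$ again lies in $Z(E)$. The paper handles this by citing \cite[Theorem~3.1.10]{meyernieberg}, which says that any $m\in Z(E)$ that is invertible in $\mathcal{L}(E)$ has its inverse in $Z(E)$ (no positivity of the inverse assumed), whereas you give a self-contained elementary argument: since $m$ is band preserving, $y=m(m^{-1}y)$ lies in the band generated by $m^{-1}y$, and since $m^{-1}$ is a lattice homomorphism it preserves disjointness, so $x\perp y$ forces $m^{-1}x\perp y$; hence $m^{-1}$ is band preserving, and being positive it is order bounded, so $m^{-1}\in\mathrm{Orth}(E)=Z(E)$. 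Your argument is valid but exploits the extra hypothesis available here (that $m^{-1}$ is itself a positive lattice homomorphism), so it proves less than the cited theorem; in exchange it avoids the external reference and makes the corollary essentially self-contained modulo Proposition~\ref{p:center_is_cont_functions}. Your added details (that a positive bijection with positive inverse is a lattice isomorphism, and that a nonnegative multiplicatively invertible element of $C(K)$ lies in $C(K)^{++}$) are correct and match what the paper leaves implicit.
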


\begin{proof}
 Suppose $m$ corresponds to an element of $C(K)^{++}$. Then $m^{-1}$ corresponds to an element of $C(K)^{++}$ as well, and so $m$ is positive with a positive inverse and hence a lattice automorphism, i.e., $m \in \ZAut(E)$. Conversely, let $m \in \ZAut(E)$. Then $m$ corresponds to a positive function in $C(K)$. Since $Z(E) = \mbox{Orth}(E)$, \cite[Theorem~3.1.10]{meyernieberg} shows that, if $m \in Z(E)$ is invertible in $\mathcal{L}(E)$, its inverse is in $Z(E)$ as well. So $m^{-1} \in Z(E) \cong C(K)$, which is only possible if $m$ corresponds to an element of $C(K)^{++}$. The final statement now follows from $C(K) = C(K)^{++} - \R^+ \cdot \mathbf{1}$.
\end{proof}

The next lemma yields an isometric action of the group of lattice automorphisms on the center of a Banach lattice.

\begin{lemma}\label{l:conjugation_action}
 Let $E$ be a Banach lattice. Conjugation by elements of $\Aut(E)$ induces a group homomorphism from $\Aut(E)$ into the group of isometric algebra and lattice automorphisms of $Z(E)$. If $H \subset \Auts(E)$ is a uniformly bounded set such that $H^{-1}$ is also uniformly bounded and $A \subset Z_s(E)$ is uniformly bounded, then the map $H \times A \to Z_s(E)$ defined by $(T,m) \mapsto TmT^{-1}$ is continuous. Moreover, if $T \in \Aut(E)$ is fixed, then $m \mapsto TmT^{-1}$ is a continuous algebra and lattice automorphism of $Z_s(E)$.
\end{lemma}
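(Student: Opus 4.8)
The plan is to separate the statement into its three parts: the algebraic claim that $c_T \colon m \mapsto TmT^{-1}$ is an isometric algebra and lattice automorphism of $Z(E)$ depending homomorphically on $T$, and the two continuity claims, one jointly on $H \times A$ and one for fixed $T$.

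First I would handle the algebraic part. The crucial observation is that conjugation by a positive operator with positive inverse preserves the order intervals $[-\lambda I, \lambda I]$: if $-\lambda I \le m \le \lambda I$, then applying the positive operator $T$ on the left and the positive operator $T^{-1}$ on the right (after evaluating on $E^+$) gives $-\lambda I \le TmT^{-1} \le \lambda I$. Hence $c_T$ maps $Z(E)$ into $Z(E)$; it is evidently linear, fixes $I$, is multiplicative for the composition product on $Z(E)$, and $T \mapsto c_T$ is a group homomorphism with $c_{T^{-1}}$ inverting $c_T$. To upgrade this to the desired statement: since by Proposition~\ref{p:center_is_cont_functions} (or \cite[Theorem~3.1.11]{meyernieberg}) the operator norm on $Z(E)$ is the order unit norm with unit $I$, and both $c_T$ and $c_{T^{-1}}$ preserve each $[-\lambda I, \lambda I]$, one gets $\norm{c_T(m)} = \norm{m}$, so $c_T$ is isometric; since $Z(E) \cong C(K)$ is a vector lattice and $c_T$ is an order isomorphism, it is a lattice isomorphism; and since $Z(E) = \mathrm{Orth}(E)$ carries a unique $f$-algebra product with positive unit $I$ (\cite[Theorem~2.58]{posoperators}, as in the proof of Proposition~\ref{p:center_is_cont_functions}) and composition is such a product, $c_T$ is an algebra isomorphism.

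For the joint continuity I would fix $x \in E$ and a convergent net $(T_i, m_i) \to (T, m)$ in $H \times A$, let $M$ be a common uniform bound for $H$, $H^{-1}$ and $A$, and telescope $T_i m_i T_i^{-1} x - T m T^{-1} x$ through $T_i m_i T^{-1} x$ and $T_i m T^{-1} x$, obtaining the three summands $T_i m_i (T_i^{-1} - T^{-1})x$, $T_i (m_i - m) T^{-1} x$ and $(T_i - T)(m T^{-1} x)$. These are bounded by $M^2 \norm{(T_i^{-1} - T^{-1})x}$, by $M \norm{(m_i - m) T^{-1} x}$, and by $\norm{(T_i - T)(m T^{-1} x)}$ respectively; the first tends to $0$ because Lemma~\ref{l:group_is_top_group} (using that $H^{-1}$ is uniformly bounded) gives $T_i^{-1} \to T^{-1}$ strongly, the second because $m_i \to m$ strongly, and the third because $T_i \to T$ strongly and $m T^{-1} x$ is a fixed vector. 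For the last claim I would argue directly rather than as a special case: if $m_i \to m$ in $Z_s(E)$ then $m_i T^{-1} x \to m T^{-1} x$ for each $x$, and applying the bounded operator $T$ yields $T m_i T^{-1} x \to T m T^{-1} x$, so $c_T$ is continuous on $Z_s(E)$, the algebraic part having already been established.

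I do not expect a genuine obstacle here; the point requiring the most care is the joint-continuity estimate, where the uniform bounds on $H$, $H^{-1}$, $A$ and the strong continuity of inversion from Lemma~\ref{l:group_is_top_group} must be inserted into the three-term bound in exactly the right places. Everything else rests on Proposition~\ref{p:center_is_cont_functions} and elementary positivity arguments.
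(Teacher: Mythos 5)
Your proposal is correct and takes essentially the same route as the paper: the order-interval argument (conjugation preserves $[-\lambda I,\lambda I]$, applied to both $T$ and $T^{-1}$) yields the isometric algebra and lattice automorphism exactly as in the paper's proof, and your three-term telescoping estimate is just an explicit version of the paper's appeal to Lemma~\ref{l:group_is_top_group} together with the (simultaneous on uniformly bounded sets, respectively separate) continuity of multiplication in the strong operator topology. No gaps.
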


\begin{proof}
Let $T \in \Aut(E)$ and $m \in Z(E)$, and take $\lambda \geq 0$. Then
\begin{align*}
-\lambda x \leq mx \leq \lambda x \quad \forall x \in E^+ &\Leftrightarrow -\lambda T^{-1}y \leq mT^{-1}y \leq \lambda T^{-1}y \quad \forall y \in E^+ \\
&\Leftrightarrow -\lambda y \leq TmT^{-1}y \leq \lambda y \quad \forall y \in E^+,
\end{align*}
hence conjugation by elements of $\Aut(E)$ maps $Z(E)$ isometrically into itself. The conjugation action is obviously an algebra automorphism, and if $m$ is positive, then $TmT^{-1}$ is positive as well, so conjugation is positive with a positive inverse, hence a lattice automorphism. The second statement follows from Lemma \ref{l:group_is_top_group}, and the continuity of $m \mapsto TmT^{-1}$ follows from the separate continuity of multiplication in the strong operator topology.
\end{proof}

Finally, we need a proposition for weak integration of strongly continuous center valued functions. If $X$ is a Banach space, $(H, dh)$ a compact Hausdorff probability space, with which we mean a compact Hausdorff space equipped with a not necessarily regular Borel probability measure, and $g \colon H \to X$ a continuous function, then \cite[Theorem~3.27]{rudin} shows that there exists a unique element of $X$, denoted by $\int_H g(h) \;dh$, defined by duality as follows:
\begin{align}\label{e:vect_int}
 \left\langle \int_H g(h) \;dh, x^* \right\rangle = \int_H \langle g(h), x^* \rangle \;dh \quad \forall x^* \in X^*.
\end{align}
Moreover, $\int_H g(h) \;dh \in \overline{\mbox{co}}(g(H))$. By applying functionals it easily follows that bounded operators can be pulled through the integral, and that the triangle inequality holds.

The above vector valued integral will be used in the next proposition to define an operator valued integral. The Banach space part of the next proposition is a standard argument, which we repeat here for the convenience of the reader.

\begin{prop}\label{p:central_valued_integration}
 Let $(H, dh)$ be a compact Hausdorff probability space, $E$ a Banach space and $f \colon H \to \mathcal{L}_s(E)$ a continuous map. Then the operator $\int_H f(h) \;dh \colon E \to E$, defined by
\[ \left( \int_H f(h) \;dh \right) x := \int_H f(h)x \;dh \quad \forall x \in E, \]
where the second integral is defined by \eqref{e:vect_int}, defines an element of $\mathcal{L}(E)$ satisfying $\norm{\int_h f(h) \;dh} \leq \sup_{h \in H} \norm{f(h)}$. If $S,T \in \mathcal{L}(E)$, then
\begin{equation}\label{e:propintegral}
 S \left( \int_H f(h) \;dh \right) T = \int_H S f(h) T \;dh.
\end{equation}
Moreover, if $E$ is a Banach lattice and $f(H) \subset Z(E)$, then there exist $\lambda, \mu \in \R$ such that $f(H) \subset [\lambda I, \mu I]$, and for such $\lambda$ and $\mu$ we have $\int_H f(h) \;dh \in [\lambda I, \mu I] \subset Z(E)$.
\end{prop}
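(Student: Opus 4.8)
The plan is to treat the Banach space assertions and the lattice assertion in turn, leaning on the duality-defined vector integral \eqref{e:vect_int} and the facts recorded just before the proposition.

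First I would handle the operator-valued integral on a Banach space. For each $x \in E$ the map $h \mapsto f(h)x$ is continuous from $H$ into $E$ (since $f$ is continuous into $\mathcal{L}_s(E)$), so $\int_H f(h)x \dk$ makes sense via \eqref{e:vect_int}; here I write $\dk$ loosely but will just use $dh$. Linearity in $x$ is immediate from the defining duality formula, and boundedness follows from the triangle inequality for the vector integral together with $\norm{f(h)x} \le \sup_{h} \norm{f(h)} \cdot \norm{x}$; note $\sup_h \norm{f(h)} < \infty$ because $f(H)$ is a compact, hence uniformly bounded, subset of $\mathcal{L}_s(E)$. This gives $\norm{\int_H f(h)\dk} \le \sup_{h \in H} \norm{f(h)}$. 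For the identity $S(\int_H f(h)\dk)T = \int_H Sf(h)T\dk$, I would evaluate both sides at an arbitrary $x \in E$: the right side is $\int_H Sf(h)(Tx)\dk$, and since bounded operators pull through the vector integral (as recalled in the paragraph preceding the proposition), $\int_H Sf(h)(Tx)\dk = S\int_H f(h)(Tx)\dk = S(\int_H f(h)\dk)(Tx)$, which is the left side evaluated at $x$. This is the ``standard argument'' the authors allude to and should be short.

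Next, the lattice part. Assume now $E$ is a Banach lattice with $f(H) \subset Z(E)$. The existence of $\lambda, \mu$ with $f(H) \subset [\lambda I, \mu I]$ follows from uniform boundedness of $f(H)$ in $Z(E)$: by Proposition~\ref{p:center_is_cont_functions} (or directly from \cite[Theorem~3.1.11]{meyernieberg}), the operator norm on $Z(E)$ is the order-unit norm for the unit $I$, so $\norm{f(h)} \le C$ for all $h$ forces $-CI \le f(h) \le CI$; take $\lambda = -C$, $\mu = C$ (any larger interval works too). Then I must show $\int_H f(h)\dk \in [\lambda I, \mu I]$. Fix such $\lambda, \mu$. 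The inclusion $\int_H g(h)\dk \in \overline{\co}(g(H))$ from the discussion of \eqref{e:vect_int}, applied coordinatewise, is the natural tool, but to keep everything inside $Z(E)$ I would argue order-theoretically: for every $x \in E^+$ and every $h$, $\lambda x \le f(h)x \le \mu x$, i.e. $\mu x - f(h)x \in E^+$ and $f(h)x - \lambda x \in E^+$. Applying a positive functional $x^* \in (E^*)^+$ and using \eqref{e:vect_int} gives $\langle (\int_H f(h)\dk)x, x^*\rangle = \int_H \langle f(h)x, x^*\rangle \dk$, and since $\langle f(h)x, x^*\rangle$ lies in $[\langle \lambda x, x^*\rangle, \langle \mu x, x^*\rangle]$ for all $h$ while $dh$ is a probability measure, the integral lies in the same interval; thus $\langle (\int_H f(h)\dk)x, x^*\rangle \in [\langle \lambda x, x^*\rangle, \langle \mu x, x^*\rangle]$ for all $x^* \in (E^*)^+$. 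Because positive functionals separate points of $E$ and determine the order (an element $y \in E$ satisfies $y \ge 0$ iff $\langle y, x^*\rangle \ge 0$ for all $x^* \in (E^*)^+$), this yields $\lambda x \le (\int_H f(h)\dk)x \le \mu x$ for all $x \in E^+$, i.e. $-|\lambda|\,I \le \int_H f(h)\dk \le \mu I$ in the appropriate sense; more precisely $\lambda I \le \int_H f(h)\dk \le \mu I$ as regular operators. In particular $\int_H f(h)\dk$ is bounded above and below by multiples of $I$, hence lies in $Z(E)$, and lies in $[\lambda I, \mu I]$.

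The main obstacle I anticipate is purely bookkeeping rather than conceptual: making sure the order relations that a priori are tested ``against positive $x^* \in (E^*)^+$ applied to $f(h)x$ for $x \in E^+$'' correctly reassemble into the operator inequality $\lambda I \le \int_H f(h)\dk \le \mu I$, and that this operator inequality is exactly what membership in $Z(E)$ and in $[\lambda I,\mu I]$ requires. One should be a little careful that $Z(E)$ is only claimed to be closed under this integral a posteriori — so the argument must first produce the two-sided operator bound (which needs no prior knowledge that the integral is central) and only then invoke the definition of $Z(E)$ to conclude centrality. Everything else — linearity, the norm bound, pulling $S$ and $T$ through — is routine once the vector-integral facts quoted before the proposition are in hand.
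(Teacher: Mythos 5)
Your proposal is correct and follows essentially the same route as the paper: the Banach space part is the same standard computation (uniform boundedness of $f(H)$, the norm estimate via the triangle inequality, and pulling $S$ and $T$ through by applying vectors and functionals), and the lattice part likewise reduces to showing $\lambda x \leq \left( \int_H f(h) \;dh \right) x \leq \mu x$ for all $x \in E^+$. The only cosmetic difference is that you verify these inequalities by testing against positive functionals via \eqref{e:vect_int}, whereas the paper invokes the fact that the vector integral lies in the closed convex hull of the integrand's values together with the closedness of $E^+$ --- two formulations of the same duality argument.
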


\begin{proof}
Note that $f(H)$ is uniformly bounded by the principle of uniform boundedness. The computation
\[ \norm{\left( \int_H f(h) \;dh \right) x} = \norm{\int_H f(h)x \;dh} \leq \int_H \norm{f(h)x} \;dh \leq \sup_{h \in H} \norm{f(h)} \norm{x} \]
shows that the linear operator $\int_H f(h) \;dh$ is bounded and that its norm satisfies the required estimate. By applying elements of $E$ and functionals, and using the properties of the $E$-valued integral, \eqref{e:propintegral} easily follows.

Now assume $E$ is a Banach lattice and $f(H) \subset Z(E)$. By the uniform boundedness of $f(H)$, there exist $\lambda, \mu \in \R$ such that $f(H) \subset [\lambda I, \mu I]$. Suppose $\lambda$ and $\mu$ satisfy this relation, then we have to show that $(\int_H f(h) \;dh) x \in [\lambda x, \mu x]$ for all $x \in E^+$, which is equivalent with
\[ \int_H \lambda x \;dh \leq \int_H f(h)x \;dh \leq \int_H \mu x \;dh. \]
Now $f(h)x - \lambda x \in E^+$ for all $h \in H$ by assumption, and since $E^+$ is a closed convex set, the properties of the $E$-valued integral imply that $\int_H [f(h)x - \lambda x] \;dh \in E^+$ as well. The second inequality follows similarly.
\end{proof}

\comment{

If $\Omega$ is a Tychonoff space, i.e., a Hausdorff space in which points and closed sets are separated by continuous functions, then the \emph{Stone-Cech compactification} $\beta
\Omega$ of $\Omega$ is a compact Hausdorff space, in which $\Omega$ embeds as a dense subspace, that satisfies the following universal property:

For any compact Hausdorff space $K$ and continuous function $\phi \colon \Omega \to K$, there exists a unique continuous extension $\overline{\phi} \colon \beta \Omega \to K$.

By abstract nonsense, the Stone-Cech compactification is unique up to a unique homeomorphism. We denote by $\Homeo$ the set of homeomorphisms of $\Omega$. For the next lemma, recall that a topological space is called \emph{first countable} if every point has a countable neighborhood basis, and that a $G_\delta$ set is a countable intersection of open sets.

\begin{lemma}\label{l:bijection_hom_sets}
 Let $\Omega$ be a first countable Tychonoff space. Then the map $\phi \mapsto \overline{\phi}$, where $\overline{\phi}$ is the unique extension of $\phi \colon \Omega \to \beta \Omega$, is a bijection between $\Homeo$ and $\bHomeo$.
\end{lemma}

\begin{proof}
 If $\phi \in \Homeo$, then $\overline{\phi^{-1}} \circ \overline{\phi}$ extends the embedding of $\Omega$ into $\beta \Omega$. The map $\id_{\beta \Omega}$ is such an extension, and so by the uniqueness of the extension, $\overline{\phi^{-1}} \circ \overline{\phi} = \id_{\beta \Omega}$. Similarly we obtain $\overline{\phi} \circ \overline{\phi^{-1}} \id_{\beta \Omega}$, and so $\overline{\phi} \in \bHomeo$. Obviously $\overline{\phi}$ restricted to $\Omega$ equals $\phi$.

 Conversely, let $\psi \in \bHomeo$, and let $\omega \in \Omega$. Then $\omega$ is the intersection of a neighborhood basis of $\omega$, and hence $\{ \omega \}$ is a closed $G_\delta$ set. Then $\{ \psi(\omega) \}$ is a closed $G_\delta$ set as well, and by the first Theorem on page 835 in \cite{cech}, every closed $G_\delta$ set in $\beta \Omega$ which is disjoint from $\Omega$ has cardinality at least of the continuum, and so $\psi(\omega) \in \Omega$. So $\psi$ leaves $\Omega$ invariant, and its restriction to $\Omega$ is an element of $\Homeo$. The map $\psi$ itself extends this restriction, and since the extension is unique, it follows that the extension of the restriction of $\psi$ equals $\psi$ itself.
\end{proof}

}

\section{Groups of positive operators}\label{s:abstract_characterization}

In this section we will relate certain groups of lattice automorphisms to groups of isometric lattice automorphisms. The main assumption on these groups is that every element in the group can be written as a product of a central lattice automorphism and an isometric lattice automorphism. Examples of Banach lattices where this assumption is always satisfied are normalized symmetric Banach sequence spaces, such as $c_0$ and $\ell^p$ for $1 \leq p \leq \infty$, where the fact that lattice automorphisms map atoms to atoms easily implies the above property, cf.\ Section~\ref{s:sequence_spaces}, and spaces of continuous functions, where there is a well-known characterization of lattice homomorphisms in terms of a multiplication operator and an operator arising from a homeomorphism of the underlying space, cf.\ Section~\ref{s:cont_functions}. When this assumption is satisfied, we are able to show that there exists a group of isometric lattice automorphisms which has the same invariant ideals as the original group, cf.\ Theorem~\ref{t:group_isom_invariant_bands}. The main result, Theorem~\ref{t:characterization_mult_subgroups}, shows that, under the technical Assumption~\ref{a:a1}, for every compact group $G$ of lattice automorphisms in which every element can be written as a product of a central lattice automorphism and an isometric lattice automorphisms, there exist a unique compact group $H$ of isometric lattice automorphisms and a non-unique central lattice automorphism $m$ such that $G = mHm^{-1}$.

We start by showing that a certain set of lattice automorphisms is actually a group, and, in fact, a non-trivial semidirect product. Recall that the group of isometric lattice automorphisms of a Banach lattice $E$ is denoted by $\IAut(E)$, that the group of central lattice automorphisms of $E$ is denoted by $\ZAut(E)$, and that equipped with the strong operator topology these spaces are denoted by $\IAuts(E)$ and $\ZAuts(E)$. The space $\gp$ equipped with the strong operator topology is denoted by $\gps$.

Obviously, $\IAut(E)$ is a group, and, although not quite so obvious, $\ZAut(E)$ is also a group by Corollary~\ref{c:pos_center_determines_center}. Now suppose $m \phi \in \gp$, then $(m \phi)^{-1} = \phi^{-1} m^{-1} = (\phi^{-1} m^{-1} \phi) \phi^{-1}$, which is in $\gp$ by Lemma~\ref{l:conjugation_action}. In a similar vein, if $m_1 \phi_1, m_2 \phi_2 \in \gp$, then $m_1 \phi_1 m_2 \phi_2 = m_1 (\phi_1 m_2 \phi_1^{-1}) \phi_1 \phi_2 \in \gp$. Hence $\gp$ is a subgroup of $\Aut(E)$.

Moreover, the representation of an element $m \phi \in \gp$ is unique, and to show this it is sufficient to show that $\ZAut(E) \cap \IAut(E) = \{I\}$. So suppose $m \in \ZAut(E)$ is an isometry. Then $\norm{m} = \norm{m^{-1}} = 1$, and taking into account the isometric isomorphism of $Z(E)$ with a $C(K)$ space of Proposition~\ref{p:center_is_cont_functions}, the continuous function corresponding to $m$ must be unimodular. Since this function is also positive, it must be identically one, and so $m = I$.

By Lemma \ref{l:conjugation_action} the group $\IAut(E)$ acts on $\ZAut(E)$ by conjugation, and for $\phi \in \IAut(E)$ and $m \in \ZAut(E)$, this action will be denoted by $\phi(m)$, so $\phi(m) = \phi m \phi^{-1}$. We can form the semidirect product $\sdp$, with group operation
\[ (m_1, \phi_1) (m_2, \phi_2) := (m_1 \phi_1(m_2), \phi_1 \phi_2). \]
Using that $\phi(m)$ is the conjugation action of $\phi \in \IAut(E)$ on $m \in \ZAut(E)$, it easily follows that the map $\chi \colon \sdp \to \gp$ defined by $\chi(m, \phi) := m \phi$ is a group isomorphism.

All in all, it is now clear that $\gp$ is a subgroup of $\Aut(E)$, and that it is isomorphic with $\sdp$. If necessary we identify $\sdp$ and $\gp$ through $\chi$. The map $p \colon \gp \to \IAut(E)$ defined by $p(m \phi) := \phi$ is the projection onto the second factor of the semidirect product, which is a group homomorphism.

In the rest of this section we will assume that the group of lattice automorphisms under consideration is contained in $\gp$. For certain sequence spaces and spaces of continuous function, we will show that $\gp$ equals the whole group of lattice automorphisms, cf.\ Section \ref{s:sequence_spaces} and Section \ref{s:cont_functions}, but the next example, which was communicated to us by A.W.\ Wickstead, shows that there is a simple Banach lattice, not every lattice automorphism of which is a product of a central lattice automorphism and an isometric lattice automorphism.

\begin{example}\label{e:automorphisms_is_not_product}
 Consider $\R^2$ with the usual ordering, and norm $\norm{(x,y)} := \max\{ |y| , |x| + |y|/2 \}$, so that it becomes a Banach lattice with the standard unit vectors having norm one. Hence $(x,y) \mapsto (y,x)$ is the only possible nontrivial isometric lattice automorphism, but this map is not isometric since $\norm{(1,2)} = 2$ whereas $\norm{(2,1)} = 5/2$. Therefore $(x,y) \mapsto (y,x)$ cannot be a product of a central lattice automorphism and an isometric lattice automorphism, since it is not a central lattice automorphism.
\end{example}

\begin{theorem}\label{t:group_isom_invariant_bands}
 Let $E$ be a Banach lattice and $G \subset \gp$ a group. Then $p(G)$ is a group of isometric lattice automorphisms, with the same invariant ideals as $G$.
\end{theorem}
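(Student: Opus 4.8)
The plan is to separate the two assertions. That $p(G)$ is a group of isometric lattice automorphisms is immediate: $p \colon \gp \to \IAut(E)$ is a group homomorphism (the projection onto the second factor of the semidirect product $\sdp$), so $p(G)$ is a subgroup of $\IAut(E)$, and we are done with this part.

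For the statement about invariant ideals, the key point I would isolate first is that \emph{every central lattice automorphism leaves every ideal of $E$ invariant}. To prove this, let $m \in \ZAut(E) \subset Z(E)$ and choose $\lambda \geq 0$ with $-\lambda I \leq m \leq \lambda I$. For $x \in E^+$ this yields $-\lambda x \leq mx \leq \lambda x$, hence $|mx| \leq \lambda x$; for arbitrary $x \in E$, writing $x = x^+ - x^-$ and using the triangle inequality gives $|mx| \leq |mx^+| + |mx^-| \leq \lambda(x^+ + x^-) = \lambda |x|$. Consequently, if $J \subseteq E$ is an ideal and $y \in J$, then $|my| \leq \lambda |y|$, and the solidity of $J$ forces $my \in J$; thus $mJ \subseteq J$. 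Applying the same to $m^{-1} \in \ZAut(E)$ gives $mJ = J$. (The identical argument shows that central lattice automorphisms also preserve bands, closed ideals, and projection bands, which is convenient for later use.)

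Granted this lemma, the equality of invariant ideals is purely formal. First note that, since $G$ and $p(G)$ are groups, an ideal $J$ is invariant under one of them exactly when $gJ \subseteq J$ for every element $g$, and then automatically $gJ = J$ (applying the inclusion also to $g^{-1}$). Now fix an ideal $J$ and an element $g \in G$, written as $g = m\phi$ with $m \in \ZAut(E)$ and $\phi = p(g) \in p(G)$. Then $gJ = m(\phi J)$, and since $mJ = J$ by the lemma, we have $gJ = J$ if and only if $\phi J = J$. Letting $g$ range over $G$, so that $\phi = p(g)$ ranges over all of $p(G)$, this shows at once that $J$ is $G$-invariant if and only if it is $p(G)$-invariant.

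I do not expect a genuine obstacle here; the only mild subtlety is to make sure the bound $|mx| \leq \lambda |x|$ is derived for \emph{all} $x \in E$ and not just for positive $x$, and then to invoke solidity (rather than mere linearity) of $J$ to conclude $my \in J$. Everything else is bookkeeping with the semidirect product decomposition $g = m\phi$.
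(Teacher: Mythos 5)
Your proposal is correct and follows essentially the same route as the paper: the key observation in both is that central operators (and their inverses) leave every ideal invariant, so that for $g = m\phi$ the invariance of an ideal under $g$ is equivalent to its invariance under $\phi = p(g)$. Your write-up merely spells out the solidity argument and the group-invariance bookkeeping in a bit more detail than the paper does.
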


\begin{proof}
 Let $m \in Z(E)$ and $x \in E^+$. Then there exists a $\lambda \geq 0$ such that $-\lambda x \leq mx \leq \lambda x$, and so $mx$ is contained in the ideal generated by $x$. This fact extends to all $x \in E$ by writing $x = x^+ - x^-$, and so $m$ leaves all ideals in $E$ invariant.

 Now let $m \phi \in G$, with $m \in \ZAut(E)$ and $\phi \in \IAut(E)$, let $x \in E$ and let $I \subset E$ be an ideal. Since $m, m^{-1} \in Z(E)$, by the above $x \in I$ if and only if $mx \in I$, and so $I$ is invariant for $m \phi$ if and only if $I$ is invariant for $\phi = p(m \phi)$.
\end{proof}

We will now examine groups $G \subset \gps$ which are compact (in the strong operator topology). In this case we can say much more than Theorem~\ref{t:group_isom_invariant_bands}, if the following assumption on the Banach lattice is satisfied.

\begin{assumption}\label{a:a1}
 If $p \colon \gps \to \IAuts(E)$ denotes the group homomorphism $m \phi \mapsto \phi$, then $p|_G$ is continuous for any compact subgroup $G \subset \gps$.
\end{assumption}

The next proposition allows us to associate compact subgroups of $\IAuts(E)$ with compact subgroups of $\gps$.

\begin{prop}\label{p:bijection_compact_subgroups}
 Let $E$ be a Banach lattice satisfying Assumption \ref{a:a1}. Let $A$ be the set of compact subgroups $G \subset \gps$, and let $B$ be the set of pairs $(H, q)$, where $H \subset \IAuts(E)$ is a compact subgroup and $q \colon H \to \gps$ is a continuous homomorphism such that $p \circ q = \id_H$. Define $\alpha \colon A \to B$ and $\beta \colon B \to A$ by
\[ \alpha(G):= (p(G), (p|_G)^{-1}), \quad \beta(H,q):= q(H). \]
Then for each $G \in A$, $G \mapsto p(G)$ is an isomorphism of compact groups, and $\alpha$ and $\beta$ are inverses of each other.
\end{prop}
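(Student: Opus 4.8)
The plan is to isolate the single non-formal ingredient — that a compact subgroup of $\gps$ meets $\ZAut(E)$ only in $\{I\}$, so that $p$ restricted to it is injective — and then carry out a routine diagram chase.

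First I would prove this injectivity. Let $G \subset \gps$ be a compact subgroup and let $m \in G \cap \ZAut(E)$. Since $G$ is compact in $\mathcal{L}_s(E)$ it is uniformly bounded, so $\norm{m^n} \le C$ for all $n \in \Z$ and some fixed $C > 0$. Under the isometric algebra isomorphism $Z(E) \cong C(K)$ of Proposition~\ref{p:center_is_cont_functions} the element $m$ corresponds to some $f$, which lies in $C(K)^{++}$ by Corollary~\ref{c:pos_center_determines_center}, and $m^n$ corresponds to $f^n$; hence $f(k)^n \le \norm{f^n}_\infty = \norm{m^n} \le C$ for every $k \in K$ and $n \in \Z$, and as $f(k) > 0$ this is only possible if $f \equiv \mathbf{1}$, i.e.\ $m = I$. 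Thus $\ker(p|_G) = G \cap \ZAut(E) = \{I\}$, so $p|_G$ is injective; being also continuous by Assumption~\ref{a:a1} and surjective onto $p(G)$, which is a compact subgroup of $\IAuts(E)$ as the continuous image of the compact group $G$, the map $p|_G \colon G \to p(G)$ is a continuous bijective homomorphism from a compact group onto a Hausdorff group, hence an isomorphism of compact topological groups.

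Next I would check that $\alpha$ and $\beta$ are well defined and mutually inverse. For $G \in A$, setting $q := (p|_G)^{-1} \colon p(G) \to G \subset \gps$ gives a continuous homomorphism with $p \circ q = \id_{p(G)}$, so $\alpha(G) = (p(G), q) \in B$; and for $(H, q) \in B$ the image $q(H)$ is a compact subgroup of $\gps$, being the continuous homomorphic image of the compact group $H$, so $\beta(H, q) = q(H) \in A$. The identity $\beta \circ \alpha = \id_A$ is immediate, since $\beta(\alpha(G)) = (p|_G)^{-1}(p(G)) = G$. For $\alpha \circ \beta = \id_B$, fix $(H, q) \in B$ and put $G' := q(H)$; from $p \circ q = \id_H$ the map $q$ is injective, hence a bijection $H \to G'$, and for $g' = q(h) \in G'$ one has $p(g') = p(q(h)) = h$, so $p|_{G'} \colon G' \to H$ is exactly the inverse of $q \colon H \to G'$. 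In particular $p(G') = H$ and $(p|_{G'})^{-1} = q$, whence $\alpha(\beta(H, q)) = (p(G'), (p|_{G'})^{-1}) = (H, q)$.

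The hard part is the very first step: it is the only place where the concrete structure of the center (Proposition~\ref{p:center_is_cont_functions} and Corollary~\ref{c:pos_center_determines_center}) and the uniform boundedness of compact subgroups of $\mathcal{L}_s(E)$ are really used, and, together with the appeal to Assumption~\ref{a:a1} for continuity of $p|_G$, it is what makes $\alpha$ and the claimed isomorphism $G \cong p(G)$ well defined in the first place. Once that is in place, everything reduces to the elementary observation that a compact subgroup of $\gps$ is nothing but the graph of a continuous section of $p$ over a compact subgroup of $\IAuts(E)$.
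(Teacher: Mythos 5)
Your proof is correct and follows essentially the same route as the paper's: you establish that $\ker(p|_G) = G \cap \ZAut(E)$ is trivial via uniform boundedness together with the $C(K)^{++}$ description of $\ZAut(E)$ (merely spelling out the power argument the paper dismisses as obvious), obtain continuity of $(p|_G)^{-1}$ from the compact-to-Hausdorff bijection, and then verify $\alpha$ and $\beta$ are mutually inverse by the same formal computation. No gaps.
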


\begin{proof}
If $G \in A$, then by Assumption \ref{a:a1} $\ker(p|_G)$ is a compact subgroup of $\ZAuts(E)$, which is isometrically isomorphic with the group $C(K)^{++}$ of strictly positive continuous functions on some compact Hausdorff space $K$ by Corollary~\ref{c:pos_center_determines_center}. By the principle of uniform boundedness, compact subgroups of $\ZAuts(E)$ are uniformly bounded, and obviously the only uniformly bounded subgroup of $C(K)^{++}$ is trivial, hence $p|_G$ is a group isomorphism. Moreover it is a continuous bijection between a compact space and a Hausdorff space, hence $(p|_G)^{-1}$ is continuous. Clearly $p \circ (p|_G)^{-1} = \id_{p(G)}$, so $\alpha$ is well defined.

Let $G \in A$, then $\beta(\alpha(G)) = \beta(p(G), (p|_G)^{-1}) = G$. Conversely, let $(H,q) \in B$, then $\alpha(\beta(H, q)) = \alpha(q(H)) = (p(q(H)), (p|_{q(H)})^{-1})$, and since $p \circ q = \id_H$ it follows that $p(q(H)) = H$ and that $(p|_{q(H)})^{-1} = (p|_{q(H)})^{-1} \circ p \circ q = q$.
\end{proof}

By the above proposition the compact subgroups $G \subset \gps$ are parametrized by the pairs $(H,q)$ of compact subgroups $H \subset \IAuts(E)$ and continuous homomorphism $q \colon H \to \gps$ satisfying $p \circ q = \id_H$. We will now investigate such maps $q$, for a given compact subgroup $H$ of $\IAuts(E)$. The condition $p \circ q = \id_H$ is equivalent with the existence of a map $f \colon H \to \ZAut(E)$ such that $q(\phi) = f(\phi)\phi$, for $\phi \in \IAut(E)$. We now describe the relation between the continuity of $f$ and the continuity of $q$.

\begin{lemma}\label{l:concon}
Let $E$ be a Banach lattice satisfying Assumption \ref{a:a1}, let $H \subset \IAuts(E)$ be a compact group and let $q \colon H \to \gps$ be a group homomorphism of the form $q(\phi) := f(\phi)\phi$, for some map $f \colon H \to \ZAuts(E)$. Then $q$ is continuous if and only if $f$ is continuous.
\end{lemma}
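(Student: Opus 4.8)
The plan is to prove the two implications separately, exploiting the factorization $q(\phi)=f(\phi)\phi$ and the fact that $H$, being a compact group of invertible isometries, satisfies the hypotheses needed to apply the continuity lemmas from Section~\ref{s:preliminaries}. First I would record the basic facts: $H\subset\IAuts(E)$ is uniformly bounded (indeed consists of isometries) and so is $H^{-1}$, and since $q$ is a homomorphism and $p\circ q=\id_H$, the image $q(H)$ is a subgroup of $\gps$ whose projection under $p$ is $H$; moreover $p(q(\phi))=\phi$, so $f(\phi)=q(\phi)\phi^{-1}$ is genuinely determined by $q$.

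For the direction ``$f$ continuous $\Rightarrow$ $q$ continuous'': given a net $\phi_i\to\phi$ in $H$, we have $f(\phi_i)\to f(\phi)$ in $\ZAuts(E)$ by hypothesis. Now $q(\phi_i)=f(\phi_i)\phi_i$, and I want to conclude $q(\phi_i)\to q(\phi)=f(\phi)\phi$ strongly. Since the $\phi_i$ are isometries, the family $\{f(\phi_i)\}$ is uniformly bounded (a convergent net in $\mathcal{L}_s$ is bounded, or one uses that $f(H)$ is relatively compact hence bounded), so one can write, for $x\in E$,
\[
\norm{f(\phi_i)\phi_i x - f(\phi)\phi x}\leq \norm{f(\phi_i)(\phi_i x-\phi x)}+\norm{(f(\phi_i)-f(\phi))\phi x}\leq M\norm{\phi_i x-\phi x}+\norm{(f(\phi_i)-f(\phi))\phi x},
\]
and both terms tend to zero. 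Thus $q$ is continuous. (Alternatively, one can phrase this as: multiplication in $\mathcal{L}_s(E)$ is simultaneously continuous when the first variable ranges over a uniformly bounded set, applied to $f(\phi_i)\cdot\phi_i$.)

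For the direction ``$q$ continuous $\Rightarrow$ $f$ continuous'': here I would invoke Lemma~\ref{l:projections_cont} with $A=\ZAut(E)$, $B=\IAut(E)$, and $C=q(H)$. The hypothesis ``$T_1S_1=T_2S_2$ iff $T_1=T_2$, $S_1=S_2$'' is exactly the uniqueness of the factorization in $\gp$ established just before Theorem~\ref{t:group_isom_invariant_bands} (namely $\ZAut(E)\cap\IAut(E)=\{I\}$), so $p_A=$ the map $m\phi\mapsto m$ and $p_B=p$ are well defined on $q(H)$. Now $p_B|_{q(H)}=p|_{q(H)}=q^{-1}$ composed with nothing — more precisely $p|_{q(H)}$ is continuous by Assumption~\ref{a:a1} (since $q(H)$ is a compact subgroup of $\gps$), so ``$p_B$ restricted to $C$ is continuous'' holds; the elements of $p_B(C)=H$ are surjective since they are automorphisms; and $p_A(C)=f(H)$ is uniformly bounded because $f(H)=p_A(q(H))$ is the continuous image (Lemma~\ref{l:projections_cont} is not yet available, so instead: $f(\phi)=q(\phi)\phi^{-1}$ and $q(H)$, being compact, is uniformly bounded, as is $H^{-1}$, so $f(H)$ is uniformly bounded by submultiplicativity of the norm). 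Hence Lemma~\ref{l:projections_cont} gives that $p_A|_{q(H)}$ is continuous. Composing with the continuous map $q\colon H\to q(H)$ shows $f=p_A\circ q$ is continuous on $H$.

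The main obstacle is organizational rather than deep: one must be careful that Assumption~\ref{a:a1} is what supplies the continuity of $p$ on the compact subgroup $q(H)\subset\gps$ (this is precisely why the assumption is invoked in the statement), and that the uniform boundedness hypotheses of Lemma~\ref{l:projections_cont} are verified from compactness of $q(H)$ and $H$ together with the factorization formula $f=q(\cdot)(\cdot)^{-1}$ — there is a mild circularity to avoid, in that $f(H)$'s boundedness should be derived directly, not from the very lemma one is applying. Everything else is a routine triangle-inequality estimate using that the $\phi_i$ are isometries.
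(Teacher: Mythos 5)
Your proof is correct and follows essentially the same route as the paper: for one direction, uniform boundedness of the compact set $f(H)$ together with simultaneous continuity of multiplication in $\mathcal{L}_s(E)$ on uniformly bounded first factors, and for the converse, Assumption~\ref{a:a1} applied to the compact subgroup $q(H)$ combined with Lemma~\ref{l:projections_cont} to get continuity of the first-coordinate projection, whence $f$ is the composition of $q$ with that projection. One small caution: your parenthetical claim that a convergent net in $\mathcal{L}_s(E)$ is bounded is false for nets (only for sequences), but your alternative justification via compactness of $f(H)$ is exactly the paper's argument, so nothing is lost.
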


\begin{proof}
 Suppose $f$ is continuous. Then $q$ is the composition of $f$ and the identity map with the multiplication in $\Auts(E)$. Since $f(H)$ is compact it is uniformly bounded, and multiplication in the strong operator topology is simultaneously continuous if the first factor is restricted to uniformly bounded sets. Therefore $q$ is continuous.

Conversely, suppose that $q$ is continuous. Then $q(H)$ is a group and it is compact. Moreover $q(H)$ is uniformly bounded, and so $f(H)$, the set of first coordinates of $q(H)$, is also uniformly bounded, since $\norm{f(\phi)} = \norm{f(\phi) \phi} = \norm{q(\phi)}$ for $\phi \in \IAut(E)$ by the fact that $\phi$ is an isometric automorphism. Since the projection onto the second coordinate is continuous on $q(H)$ by Assumption~\ref{a:a1}, Lemma~\ref{l:projections_cont} yields the continuity on $q(H)$ of the projection onto the first coordinate. It follows that $f$ is continuous as a composition of $q$ and the projection of $q(H)$ onto the first coordinate.
\end{proof}

We continue describing the structure of maps $q$ as above. For $\phi, \psi \in H$ we have $q(\phi \psi) = f(\phi \psi)\phi \psi$ and
\[ q(\phi) q(\psi) = f(\phi)\phi f(\psi)\psi = f(\phi) \phi(f(\psi)) \phi \psi.    \]
Hence $q$ being a homomorphism is equivalent with $f(\phi \psi) = f(\phi) \phi(f(\psi))$ for all $\phi, \psi \in H$, and such maps are called crossed homomorphisms. We will first show that the image of such crossed homomorphisms is bounded from below.

\begin{lemma}\label{l:crossed_hom_bounded_below}
 Let $E$ be a Banach lattice, let $H \subset \IAuts(E)$ be a compact group and let $f \colon H \to \ZAuts(E)$ be a continuous crossed homomorphism, i.e., a continuous map such that $f(\phi \psi) = f(\phi) \phi(f(\psi))$ for all $\phi, \psi \in H$. Then there exists an $\eps > 0$ such that $f(\phi) \geq \eps I$ for all $\phi \in H$.
\end{lemma}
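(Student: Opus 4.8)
The plan is to exploit compactness of $H$ together with the identity $f(\phi^{-1}) = \phi^{-1}(f(\phi))^{-1}$, which follows from the crossed homomorphism relation applied to $\phi\psi = e$ (note $f(e) = f(e)\,e(f(e))$ forces $f(e) = I$, using that $f(e) \in \ZAut(E)$ is invertible). First I would transport the problem to $C(K)$: via Proposition~\ref{p:center_is_cont_functions} identify $Z(E) \cong C(K)$, so that each $f(\phi)$ corresponds to a strictly positive function on $K$, and $f(\phi) \geq \eps I$ becomes the assertion that $\inf_{\phi \in H} \min_{k \in K} f(\phi)(k) > 0$. Since $f$ is continuous into $\ZAuts(E)$, and $H$ is compact, $f(H)$ is a compact subset of $\ZAuts(E)$ in the strong operator topology; by the uniform boundedness principle $f(H)$ is uniformly bounded, say $\norm{f(\phi)} \leq M$ for all $\phi \in H$. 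The key point is to get a uniform \emph{lower} bound. Here I would use that $f(\phi^{-1}) = (f(\phi))^{-1}$ conjugated by $\phi^{-1}$ (using Lemma~\ref{l:conjugation_action}, conjugation is an isometric lattice-and-algebra automorphism of $Z(E)$), so $\norm{(f(\phi))^{-1}} = \norm{\phi^{-1}(f(\phi))^{-1}} = \norm{f(\phi^{-1})} \leq M$ as well. Thus for every $\phi \in H$ the element $f(\phi) \in \ZAut(E) \cong C(K)^{++}$ satisfies $\norm{(f(\phi))^{-1}} \leq M$, which in $C(K)$ means $f(\phi)(k) \geq 1/M$ for all $k \in K$. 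Taking $\eps = 1/M$ gives $f(\phi) \geq \eps I$ for all $\phi \in H$, as desired.

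The step I expect to be the main (minor) obstacle is justifying $f(\phi^{-1}) = \phi^{-1}(f(\phi))^{-1}$ cleanly and confirming $f(e) = I$: one sets $\psi = \phi^{-1}$ in the crossed homomorphism identity to obtain $f(e) = f(\phi)\,\phi(f(\phi^{-1}))$, and separately $\phi = \psi = e$ gives $f(e) = f(e)^2$, hence $f(e) = I$ by invertibility of $f(e) \in \ZAut(E)$; rearranging yields $\phi(f(\phi^{-1})) = (f(\phi))^{-1}$, i.e.\ $f(\phi^{-1}) = \phi^{-1}((f(\phi))^{-1})$. Everything else is a routine combination of the uniform boundedness principle, the isometric nature of the conjugation action from Lemma~\ref{l:conjugation_action}, and the elementary fact that in $C(K)$ a strictly positive function $g$ has $\norm{1/g}_\infty = 1/\min_K g$. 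I would state the argument directly in terms of norms of inverses, avoiding an explicit passage through $K$ if possible, but the $C(K)$ picture makes the lower bound transparent.
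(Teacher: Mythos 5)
Your proposal is correct and follows essentially the same route as the paper: both exploit the identity $\mathbf{1} = f(\phi)\,\phi(f(\phi^{-1}))$ (after noting $f(e)=I$), the uniform boundedness of the compact set $f(H)$, the isometry of the conjugation action from Lemma~\ref{l:conjugation_action}, and the $C(K)$ picture of $\ZAut(E)$ to turn the norm bound on $f(\phi)^{-1} = \phi(f(\phi^{-1}))$ into the pointwise lower bound $\eps = 1/M$.
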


\begin{proof}
 Since $f(H)$ is compact and hence uniformly bounded, there exists some $\lambda > 0$ such that, for all $\phi \in \IAut(E)$,
\begin{equation}\label{e:bound_by_lambda}
 \norm{\phi(f(\phi^{-1}))} = \norm{f(\phi^{-1})} \leq \lambda,
\end{equation}
since $\phi$ acts isometrically on $Z(E)$ by Lemma \ref{l:conjugation_action}. We identify $\ZAut(E)$ with $C(K)^{++}$ for some compact Hausdorff space $K$, using Corollary \ref{c:pos_center_determines_center}. Then \eqref{e:bound_by_lambda} implies
\begin{equation}\label{e:pointwise_bound_by_lambda}
 0 < \phi(f(\phi^{-1})) \leq \lambda
\end{equation}
pointwise on $K$.

 By taking $\phi = \psi = I$ in the definition of a crossed homomorphism, we obtain $f(I) = f(I)f(I)$ and so $\mathbf{1} = f(I)$. For arbitrary $\phi \in \IAut(E)$ we obtain
\[ \mathbf{1} = f(I) = f(\phi \phi^{-1}) = f(\phi) \cdot \phi(f(\phi^{-1})), \]
and so $f(\phi) \geq 1 / \lambda$ pointwise on $K$ by \eqref{e:pointwise_bound_by_lambda}, which establishes the lemma with $\eps = 1 / \lambda$.
\end{proof}

To characterize the continuous crossed homomorphisms, we will use the following lemma. It can be viewed as an analytic version of \cite[Lemma~4.2]{finitegroups}, which is a standard argument in group cohomology.

\begin{lemma}\label{l:vectint}
Let $E$ be a Banach lattice, and let $H \subset \IAuts(E)$ be a compact group. Let $f \colon H \to \ZAuts(E)$ be a strongly continuous map. Then $f$ is a continuous crossed homomorphism, i.e., a continuous map such that $f(\phi \psi) =  f(\phi) \phi(f(\psi))$, where $\phi(m)$ denotes the conjugation action of $\phi \in \Aut(E)$ on $m \in \ZAut(E)$, if and only if there exists an $m \in \ZAut(E)$ such that $f(\phi) = m \phi(m)^{-1}$ for all $\phi \in H$.
\end{lemma}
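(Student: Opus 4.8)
The plan is to prove the two implications separately, with the "if" direction being a short computation and the "only if" direction being the substantive part. For the "if" direction, suppose $m \in \ZAut(E)$ and set $f(\phi) := m\phi(m)^{-1}$. Continuity of $f$ follows from Lemma~\ref{l:conjugation_action}, since $\phi \mapsto \phi(m)^{-1} = \phi m^{-1} \phi^{-1}$ is continuous on $H$ (using that $H$ is uniformly bounded with uniformly bounded inverses, being compact, so the map $(T,m)\mapsto TmT^{-1}$ is continuous there) and multiplication by the fixed element $m$ is continuous. The crossed homomorphism identity is then a direct verification: $f(\phi)\phi(f(\psi)) = m\phi(m)^{-1}\phi\bigl(m\psi(m)^{-1}\bigr) = m\phi(m)^{-1}\phi(m)\phi\psi(m)^{-1} = m(\phi\psi)(m)^{-1} = f(\phi\psi)$, using that $\phi(\cdot)$ is an algebra homomorphism on $Z(E)$ and $\phi \circ \psi = (\phi\psi)$ as conjugation actions.

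For the "only if" direction, the idea is the standard cohomological averaging argument, now made analytic. Let $f \colon H \to \ZAuts(E)$ be a continuous crossed homomorphism. Since $H$ is compact, by Corollary~\ref{c:group_is_top_group} it is a compact topological group and carries a normalized Haar measure $d\psi$. I would define
\[ m := \int_H f(\psi)^{-1} \dpsi \in Z(E), \]
where the integral is the operator-valued integral of Proposition~\ref{p:central_valued_integration}; this makes sense because $\psi \mapsto f(\psi)^{-1}$ is strongly continuous (inversion is continuous on the compact group $f(H) \subset \ZAuts(E)$ by Lemma~\ref{l:group_is_top_group}, since $f(H)^{-1}$ is uniformly bounded), and because $f(\psi)^{-1}$ lies in $Z(E)$ for every $\psi$, Proposition~\ref{p:central_valued_integration} gives $m \in Z(E)$, in fact $m \in [\lambda I, \mu I]$ for suitable $\lambda, \mu$. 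Crucially, Lemma~\ref{l:crossed_hom_bounded_below} gives $f(\phi) \geq \eps I$ for all $\phi$, hence $f(\phi)^{-1} \leq \eps^{-1} I$ and, identifying $Z(E) \cong C(K)$, each $f(\psi)^{-1}$ is a strictly positive function bounded below by $\|f\|_\infty^{-1}$; averaging preserves this, so $m \in C(K)^{++}$, i.e.\ $m \in \ZAut(E)$ by Corollary~\ref{c:pos_center_determines_center}.

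It remains to show $f(\phi) = m\phi(m)^{-1}$, equivalently $f(\phi)\phi(m) = m$, for all $\phi \in H$. Here I would compute, pulling the bounded operator $f(\phi)$ and the conjugation $\phi(\cdot)$ through the integral (legitimate by \eqref{e:propintegral} and by applying functionals, respectively — conjugation $\phi(\cdot) = \phi\,\cdot\,\phi^{-1}$ is composition with bounded operators):
\[ f(\phi)\,\phi(m) = f(\phi)\int_H \phi\bigl(f(\psi)^{-1}\bigr)\dpsi = \int_H f(\phi)\,\phi(f(\psi))^{-1}\dpsi = \int_H \bigl(f(\phi)^{-1}\,f(\phi)\,\phi(f(\psi))\bigr)^{-1}\cdot f(\phi)\cdots \]
— more cleanly: the crossed homomorphism identity $f(\phi\psi) = f(\phi)\phi(f(\psi))$ gives $\phi(f(\psi)) = f(\phi)^{-1}f(\phi\psi)$, hence $\phi(f(\psi))^{-1} = f(\phi\psi)^{-1}f(\phi)$, so
\[ f(\phi)\,\phi(m) = \int_H f(\phi)\,f(\phi\psi)^{-1}\,f(\phi)\dpsi. \]
Hmm, that is not quite it; the correct manipulation is $f(\phi)\phi(f(\psi)^{-1}) = f(\phi)\phi(f(\psi))^{-1} = f(\phi)\bigl(f(\phi)^{-1}f(\phi\psi)\bigr)^{-1} = f(\phi)f(\phi\psi)^{-1}f(\phi)$, which still looks wrong, so the main obstacle I anticipate is getting the algebra of the averaging identity exactly right. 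The clean route: from $f(\phi\psi) = f(\phi)\,\phi(f(\psi))$ we get $f(\phi)^{-1} = \phi(f(\psi))\,f(\phi\psi)^{-1}$, i.e.\ $\phi(f(\psi)^{-1}) = f(\phi)^{-1}f(\phi\psi)$; therefore
\[ \phi(m) = \int_H \phi(f(\psi)^{-1})\dpsi = \int_H f(\phi)^{-1}f(\phi\psi)^{-1}\cdot(\cdots) \]
— I will instead integrate $\psi \mapsto f(\psi)^{-1}$ and use right-invariance of Haar measure after substituting $\psi \mapsto \phi^{-1}\psi$: then $\phi(m) = \int_H \phi(f(\phi^{-1}\psi)^{-1})\dpsi$, and the crossed identity $f(\psi) = f(\phi \cdot \phi^{-1}\psi) = f(\phi)\phi(f(\phi^{-1}\psi))$ yields $\phi(f(\phi^{-1}\psi)) = f(\phi)^{-1}f(\psi)$, so $\phi(f(\phi^{-1}\psi)^{-1}) = f(\psi)^{-1}f(\phi)$, giving $\phi(m) = \bigl(\int_H f(\psi)^{-1}\dpsi\bigr)f(\phi) = m f(\phi)$. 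Since $m \in \ZAut(E)$ is invertible, $f(\phi) = m^{-1}\cdot m f(\phi)\cdot$... — no: $\phi(m) = mf(\phi)$ rearranges (all factors commute, being in the commutative algebra $Z(E)$) to $f(\phi) = m^{-1}\phi(m) = (m\phi(m)^{-1})^{-1}$; to land exactly on the stated form $f(\phi) = m\phi(m)^{-1}$ I would simply replace $m$ by $m^{-1}$ at the outset, i.e.\ define $m := \bigl(\int_H f(\psi)^{-1}\dpsi\bigr)^{-1}$, which still lies in $\ZAut(E)$. The genuine difficulty is thus purely bookkeeping — getting the invariance-of-Haar-measure substitution and the direction of the crossed identity to conspire correctly, plus justifying interchange of $\phi(\cdot)$ with the integral — rather than anything conceptual.
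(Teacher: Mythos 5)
Your argument is correct and is essentially the paper's own proof: the same Haar-measure averaging of the cocycle, resting on Corollary~\ref{c:group_is_top_group} (existence of Haar measure), Proposition~\ref{p:central_valued_integration} (the center-valued integral stays in an order interval $[\lambda I,\mu I]$), Lemma~\ref{l:crossed_hom_bounded_below} (so the relevant bounds make the average land in $\ZAut(E)$), invariance of the Haar measure, and pulling conjugation and fixed operators through the integral via \eqref{e:propintegral}. The only cosmetic differences are that you average $f(\psi)^{-1}$ and invert at the end, whereas the paper defines $m:=\int_H f(\psi)\,d\psi$ and gets $\phi(m)=f(\phi)^{-1}m$ directly, and that your substitution $\psi\mapsto\phi^{-1}\psi$ is in fact an instance of left invariance (immaterial anyway, since Haar measure on a compact group is bi-invariant).
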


\begin{proof}
Suppose $f$ is a continuous crossed homomorphism. The group $H$ is a compact topological group by Corollary \ref{c:group_is_top_group}, and so we can equip $H$ with its normalized Haar measure $d\psi$. By Proposition~\ref{p:central_valued_integration} there exist $\lambda, \mu \in \R$ such that $f(H) \subset [\lambda I, \mu I]$ and by Lemma \ref{l:crossed_hom_bounded_below} we may assume that $\lambda > 0$. We use Proposition \ref{p:central_valued_integration} to define $m := \int_H f(\psi) \dpsi$ and also to conclude that this integral is in $ [\lambda I, \mu I]$. By Corollary~\ref{c:pos_center_determines_center}, $[\lambda I, \mu I] \subset \ZAut(E)$, and so $m \in \ZAut(E)$. Then, for $\phi \in H$, by the left invariance of $d\psi$ and the fact that bounded operators can be pulled through the integral by \eqref{e:propintegral},
\begin{align*}
 \phi(m) &= \phi \left( \int_H f(\psi) \dpsi \right) \\
&= \int_H \phi(f(\psi)) \dpsi \\
&= \int_H f(\phi)^{-1} f(\phi \psi) \dpsi \\
&= f(\phi)^{-1} \int_H f(\psi) \dpsi \\
&= f(\phi)^{-1} m,
\end{align*}
showing that $f(\phi) = m \phi(m)^{-1}$. Conversely, any $f$ defined as above is continuous by Lemma \ref{l:conjugation_action}, and such an $f$ is easily seen to be a crossed homomorphism.
\end{proof}

Putting everything together yields the following.

\begin{theorem}\label{t:characterization_mult_subgroups}
Let $E$ be a Banach lattice satisfying Assumption \ref{a:a1}, and let $G \subset \gps$ be a compact group. Then there exist a unique compact group $H \subset \IAuts(E)$ and an $m \in \ZAut(E)$ such that
\[ G = mHm^{-1}. \]
Conversely, if $H \subset \IAuts(E)$ is a compact subgroup and $m \in \ZAut(E)$, then $G \subset \gps$ defined by the above equation is a compact subgroup of $\gps$.
\end{theorem}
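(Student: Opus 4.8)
The plan is to read the statement off from Proposition~\ref{p:bijection_compact_subgroups}, Lemma~\ref{l:concon}, and Lemma~\ref{l:vectint}: essentially all the content is already contained in those results, and the only new observation needed is that a coboundary for the conjugation action of $H$ on $\ZAut(E)$ is exactly a conjugation.

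For the forward direction, I would start from a compact group $G \subset \gps$ and apply Proposition~\ref{p:bijection_compact_subgroups} to obtain the compact subgroup $H := p(G) \subset \IAuts(E)$ together with the continuous homomorphism $q := (p|_G)^{-1} \colon H \to \gps$ satisfying $p \circ q = \id_H$ and $q(H) = G$ (the latter being $\beta(\alpha(G)) = G$). Since $p \circ q = \id_H$, each $q(\phi)$ has a uniquely determined central part, so $q(\phi) = f(\phi)\phi$ for a well-defined map $f \colon H \to \ZAut(E)$. Then Lemma~\ref{l:concon} gives that $f$ is continuous because $q$ is, and, as observed just before Lemma~\ref{l:vectint}, $q$ being a homomorphism amounts to $f$ being a crossed homomorphism. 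Lemma~\ref{l:vectint} now produces an $m \in \ZAut(E)$ with $f(\phi) = m\,\phi(m)^{-1} = m\phi m^{-1}\phi^{-1}$ for all $\phi \in H$, and hence $q(\phi) = f(\phi)\phi = m\phi m^{-1}$, so that $G = q(H) = mHm^{-1}$.

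It remains to establish uniqueness of $H$ and the converse. For uniqueness, suppose $G = mHm^{-1}$ with $m \in \ZAut(E)$ and $H \subset \IAuts(E)$ a compact subgroup; for $\phi \in H$ one then has $m\phi m^{-1} = \big(m\,\phi(m)^{-1}\big)\phi$ with $m\,\phi(m)^{-1} \in \ZAut(E)$, since $\phi(m)^{-1} = \phi m^{-1}\phi^{-1} \in \ZAut(E)$ by Lemma~\ref{l:conjugation_action} and $\ZAut(E)$ is a group. As the factorization of an element of $\gp$ into a central and an isometric lattice automorphism is unique, this shows $p(m\phi m^{-1}) = \phi$, hence $p(G) = H$; so $H$ is forced to equal $p(G)$ and is unique. (The automorphism $m$ itself is not unique; it may be multiplied by any central lattice automorphism commuting with all of $H$.) For the converse, given a compact $H \subset \IAuts(E)$ and $m \in \ZAut(E)$, the set $G := mHm^{-1}$ is a subgroup of $\Aut(E)$ because conjugation by $m$ is a group automorphism, it lies in $\gp$ by the factorization $m\phi m^{-1} = \big(m\,\phi(m)^{-1}\big)\phi$ just used, and it is compact because $\phi \mapsto m\phi m^{-1}$ is strongly continuous (by separate continuity of multiplication in the strong operator topology), exhibiting $G$ as the continuous image of the compact space $H$.

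I expect no serious difficulty here, since the real work — the averaging over the Haar measure of $H$ in Lemma~\ref{l:vectint}, and the invocations of Assumption~\ref{a:a1} inside Proposition~\ref{p:bijection_compact_subgroups} and Lemma~\ref{l:concon} — has already been done. The only point calling for care is the uniqueness argument: one must notice that $m\phi m^{-1}$ is already displayed in the form (central lattice automorphism)(isometric lattice automorphism), so that $p$ returns $\phi$ and hence $p(G) = H$.
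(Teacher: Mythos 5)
Your proposal is correct and follows essentially the same route as the paper: Proposition~\ref{p:bijection_compact_subgroups} combined with Lemma~\ref{l:concon} and Lemma~\ref{l:vectint} yields $G = q(H) = mHm^{-1}$, and uniqueness comes down to $H = p(G)$, exactly as in the paper's proof. Your converse is handled by a slightly more direct verification (strong continuity of $\phi \mapsto m\phi m^{-1}$ and the factorization $m\phi m^{-1} = \bigl(m\,\phi(m)^{-1}\bigr)\phi$) instead of routing it back through the bijection of Proposition~\ref{p:bijection_compact_subgroups}, but this is only a cosmetic difference.
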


\begin{proof}
By Proposition~\ref{p:bijection_compact_subgroups}, the compact subgroups of $\gps$ are precisely the groups $q(H)$, where $H$ is a compact subgroup of $\IAuts(E)$ and $q \colon H \to \gps$ is a continuous homomorphism satisfying $p \circ q = \id_H$. As a consequence of Lemma~\ref{l:concon} and Lemma~\ref{l:vectint}, we see that the compact subgroups of $\gps$ are precisely the groups of the form $\{ m \phi(m)^{-1} \phi: \phi \in H \} = mHm^{-1}$, where $H$ is a compact subgroup of $\IAuts(E)$, and $m \in \ZAut(E)$. This establishes the theorem except for the uniqueness of $H$. As to this, if $G = \{m \phi(m)^{-1} \phi: \phi \in H \}$, for a compact group $H \subset \IAuts(E)$ and $m \in \ZAut(E)$, then, in the notation of Proposition~\ref{p:bijection_compact_subgroups}, $G = \beta(H, q)$, where $q(\phi) = m \phi(m)^{-1} \phi$ for $\phi \in H$. Since $(H, q) = \alpha (\beta (H,q)) = \alpha(G)$, this implies that $H = p(G)$. Hence $H$ is unique.
\end{proof}

Note that, given a compact subgroup $G \subset \gps$, the compact subgroup $H \subset \IAuts(E)$ is unique, but the element $m \in \ZAut(E)$ in Theorem~\ref{t:characterization_mult_subgroups} is obviously not unique, e.g., both $m$ and $\lambda m$ for $\lambda > 0$ generate the same $G$.

\section{Positive representations with compact image}\label{s:abstract_representations}

In this section we will apply the results from the previous section, in particular Proposition~\ref{p:bijection_compact_subgroups} and Lemma~\ref{l:vectint}, to representations of groups with compact (in the strong operator topology) image in Banach lattices satisfying Assumption \ref{a:a1}.

\begin{theorem}\label{t:characterization_rep_into_E}
 Let $E$ be a Banach lattice satisfying Assumption \ref{a:a1}, let $G$ be a group and let $\rho \colon G \to \gps$ a positive representation with compact image. Then there exist a unique positive representation $\pi \colon G \to \IAuts(E)$ and an $m \in \ZAut(E)$ such that
\[ \rho_s = m \pi_s m^{-1} \quad \forall s \in G. \]
The image of $\pi$ is compact. Conversely, any positive representation $\pi \colon G \to \IAuts(E)$ with compact image and $m \in \ZAut(E)$ define a positive representation $\rho$ with compact image in $\gps$ by the above equation.

In this correspondence between $\rho$ and $\pi$, $\rho$ is strongly continuous if and only if $\pi$ is strongly continuous.
\end{theorem}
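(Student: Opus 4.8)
The plan is to deduce the theorem from the group-level results of Section~\ref{s:abstract_characterization}, applied to the compact subgroup $\rho(G) \subset \gps$, together with the elementary fact that conjugation by a fixed element of $\ZAut(E)$ is a homeomorphism of $\mathcal{L}_s(E)$.

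First I would apply Theorem~\ref{t:characterization_mult_subgroups} to the compact group $\rho(G)$: there is a unique compact $H \subset \IAuts(E)$ and an $m \in \ZAut(E)$ with $\rho(G) = mHm^{-1}$, and moreover $H = p(\rho(G))$ in the notation of Proposition~\ref{p:bijection_compact_subgroups}. I then set $\pi := p\circ\rho$, a group homomorphism $G \to \IAuts(E)$ with compact image $H$. The one computation needed is the bookkeeping identity $p(m\phi m^{-1}) = \phi$ for $\phi \in H$: this follows from $m\phi m^{-1} = (m\,\phi(m)^{-1})\,\phi$ together with $m\,\phi(m)^{-1} \in \ZAut(E)$ (using that $\ZAut(E)$ is a group and that conjugation by $\phi$ preserves it, Corollary~\ref{c:pos_center_determines_center} and Lemma~\ref{l:conjugation_action}) and the uniqueness of the factorization in $\gp$. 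Applying this with $\rho_s = m\phi m^{-1}$ gives $\pi_s = \phi$, hence $\rho_s = m\pi_s m^{-1}$ for all $s \in G$.

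For uniqueness of $\pi$: if also $\rho_s = m'\pi'_s(m')^{-1}$ with $\pi'$ a positive representation with compact image and $m' \in \ZAut(E)$, then $\rho(G) = m'\pi'(G)(m')^{-1}$, so the uniqueness clause of Theorem~\ref{t:characterization_mult_subgroups} forces $\pi'(G) = H = p(\rho(G))$, and applying $p$ to $\rho_s = m'\pi'_s(m')^{-1}$ exactly as above yields $\pi'_s = p(\rho_s) = \pi_s$; the element $m$ is, of course, not unique. For the converse direction, given such $\pi$ and $m$ one checks directly that $\rho_s := m\pi_s m^{-1} = (m\,\pi_s(m)^{-1})\,\pi_s$ lies in $\gp$ and that $\rho(G) = m\pi(G)m^{-1}$ is compact by the converse half of Theorem~\ref{t:characterization_mult_subgroups}.

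For the final equivalence, the key observation is that for fixed $m \in \ZAut(E)$ the maps $T \mapsto mTm^{-1}$ and $T \mapsto m^{-1}Tm$ are continuous from $\mathcal{L}_s(E)$ to itself, since $mTm^{-1}x = m(T(m^{-1}x))$ and $m$, $m^{-1}$ are bounded, so strong convergence is preserved. Thus $\rho$ is the composition of $\pi$ with a continuous map and $\pi$ is the composition of $\rho$ with a continuous map, giving both implications at once. I do not expect a genuine obstacle: the content is all in Section~\ref{s:abstract_characterization}, and the only points needing care are the identity $p(m\phi m^{-1}) = \phi$ and keeping clear that it is $\pi$, not $m$, whose uniqueness is being asserted.
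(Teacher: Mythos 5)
Your proposal is correct and follows essentially the same route as the paper: both apply the Section~\ref{s:abstract_characterization} machinery to the compact group $\rho(G)$, define $\pi := p \circ \rho$, and obtain uniqueness of $\pi$ from the uniqueness of the $\ZAut(E)\cdot\IAut(E)$ factorization (your identity $p(m\phi m^{-1})=\phi$ is exactly this point). The only cosmetic difference is that you quote the packaged Theorem~\ref{t:characterization_mult_subgroups}, while the paper invokes Proposition~\ref{p:bijection_compact_subgroups} and Lemma~\ref{l:vectint} directly to get the explicit section $q(\phi)=m\phi(m)^{-1}\phi$ and hence the pointwise formula immediately.
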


\begin{proof}
 Since $\rho(G)$ is compact, Proposition~\ref{p:bijection_compact_subgroups} applies, and so, combining this with Lemma~\ref{l:vectint}, $p \colon \rho(G) \to p \circ \rho(G)$ has an inverse of the form $q(\phi) = m \phi(m)^{-1} \phi$ for some $m \in \ZAut(E)$ and all $\phi \in p \circ \rho(G)$. We define $\pi := p \circ \rho$, then $\pi$ has compact image, and for $s \in G$,
\[ \rho_s = (q \circ p)(\rho_s) = q(\pi_s) = m \pi_s(m)^{-1} \pi_s = m \pi_s m^{-1}. \]
This shows the existence of $\pi$. The uniqueness of $\pi$ follows from the uniqueness of the factors in $\ZAut(E)$ and $\IAut(E)$ in
\[ \rho_s = m \pi_s m^{-1} = [m \pi_s(m)^{-1}] \pi_s. \]
The remaining statements are now clear.
\end{proof}

Note that, as in Theorem~\ref{t:characterization_mult_subgroups}, $\pi$ is unique, but $m$ is not. Given the positive representation with compact image $\pi$, $m_1$ and $m_2$ induce the same positive representation with compact image if and only if $m_1^{-1} m_2$ commutes with $\pi_s$ for all $s \in G$, i.e., if and only if $m_1^{-1} m_2$ intertwines $\pi$ with itself.

Any representation as in Theorem \ref{t:characterization_rep_into_E} is, by that same theorem, obviously order equivalent to an isometric representation. In fact, we can say more. The next proposition has the same proof as \cite[Proposition~4.6]{finitegroups}.

\begin{prop}\label{p:rep_isomorphic_isometric_rep}
 Let $E$ be a Banach lattice satisfying Assumption \ref{a:a1}, let $G$ be a group and, using Theorem~\ref{t:characterization_rep_into_E}, let $\rho^1 = m_1 \pi^1 m_1^{-1}$ and $\rho^2 = m_2 \pi^2 m_2^{-1}$ be positive representations of $G$ with compact image in $\gps$, where $\pi^1$ and $\pi^2$ are isometric positive representations with compact image in $\IAuts(E)$, and $m^1, m^2 \in \ZAut(E)$. Then $\rho^1$ and $\rho^2$ are order equivalent if and only if $\pi^1$ and $\pi^2$ are isometrically order equivalent.
\end{prop}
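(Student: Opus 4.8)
The proof breaks into the two implications. For ``$\pi^1$ and $\pi^2$ isometrically order equivalent $\Rightarrow$ $\rho^1$ and $\rho^2$ order equivalent'': given $U\in\IAut(E)$ with $U\pi^1_s=\pi^2_s U$ for all $s\in G$, I would set $T:=m_2 U m_1^{-1}$, a product of lattice automorphisms and hence an element of $\Aut(E)$; substituting $\rho^i_s=m_i\pi^i_s m_i^{-1}$ gives at once $T\rho^1_s=m_2 U\pi^1_s m_1^{-1}=m_2\pi^2_s U m_1^{-1}=\rho^2_s T$. For the converse, let $T\in\Aut(E)$ intertwine $\rho^1$ and $\rho^2$; substituting $\rho^i_s=m_i\pi^i_s m_i^{-1}$ and rearranging shows that $S:=m_2^{-1}Tm_1\in\Aut(E)$ satisfies $S\pi^1_s S^{-1}=\pi^2_s$ for all $s$. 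So the whole statement reduces to: two isometric positive representations with compact image that are order equivalent are already isometrically order equivalent. Writing a prospective isometric intertwiner as $U=SC$, the requirement $U\pi^1_s U^{-1}=\pi^2_s=S\pi^1_s S^{-1}$ is equivalent to $C:=S^{-1}U$ commuting with $H:=\pi^1(G)$; so it suffices to produce a lattice automorphism $C$ commuting with $H$ for which $SC$ is isometric.

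To produce $C$ I would follow the proof of \cite[Proposition~4.6]{finitegroups}, with its finite summation replaced by integration over the compact group $H$ against its normalized Haar measure, available by Corollary~\ref{c:group_is_top_group}. The ingredients are at hand: $|||x|||:=\norm{Sx}$ is an $H$-invariant equivalent lattice norm on $E$, because $ShS^{-1}$ runs through the isometric automorphisms $\pi^2(G)$; the centre $Z(E)\cong C(K)$ carries $\ZAut(E)$ as the group $C(K)^{++}$ (Proposition~\ref{p:center_is_cont_functions}, Corollary~\ref{c:pos_center_determines_center}); a continuous $\ZAut(E)$-valued crossed homomorphism on $H$ is bounded below (Lemma~\ref{l:crossed_hom_bounded_below}) and is a coboundary $\phi\mapsto m\phi(m)^{-1}$ (Lemma~\ref{l:vectint}); and central-valued integrals stay inside a bounded order interval of $Z(E)$ (Proposition~\ref{p:central_valued_integration}). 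The plan is to organize the comparison of $S$ with the isometries so that the defect preventing $S$ from being isometric is recorded by a continuous coboundary valued in $\ZAut(E)$, to average it, and to read off a central $m$ with $mS\in\IAut(E)$ and $m$ commuting with $\pi^2(G)$; then $U:=mS$ is an isometric intertwiner of $\pi^1$ and $\pi^2$.

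The step I expect to be the real obstacle is exactly this: making sure the operator produced by the averaging is an honest isometric \emph{lattice automorphism}, in particular invertible, rather than just a positive contraction. The completely naive average $\int_H h^{-1}Sh\,dh$ does commute with $H$ but in general fails to be invertible, so the averaging must be pushed into the abelian group $Z(E)\cong C(K)$, where a coboundary $\phi\mapsto m\phi(m)^{-1}$ is automatically a lattice automorphism because $C(K)^{++}$ is a group and thanks to the lower bound in Lemma~\ref{l:crossed_hom_bounded_below}. The technical heart is therefore to isolate the correct continuous $\ZAut(E)$-valued cocycle attached to $S$ and to check its continuity so that Lemma~\ref{l:vectint} applies; this is precisely where the compactness of $\pi^1(G)$ and the fact that $S$ conjugates it into $\IAut(E)$ come in, and once it is in place the uniqueness of the isometric part in Theorem~\ref{t:characterization_rep_into_E} makes the whole correspondence between the $\rho$'s and the $\pi$'s canonical.
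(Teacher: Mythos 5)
Your ``only if'' direction (from isometric order equivalence of $\pi^1,\pi^2$ to order equivalence of $\rho^1,\rho^2$) is correct and is essentially the paper's argument: your $T=m_2Um_1^{-1}$ is, up to the direction of intertwining, the same intertwiner the paper writes in semidirect-product coordinates as $(m_1\phi(m_2)^{-1},\phi)$.

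The converse, however, is where your proposal has a genuine gap, and it is also where you take an unnecessarily heavy route. Your reduction to $S:=m_2^{-1}Tm_1$ with $S\pi^1_sS^{-1}=\pi^2_s$ is fine, but from there you only outline a plan: attach to $S$ a continuous $\ZAut(E)$-valued cocycle, average it over $H=\pi^1(G)$ via Lemma~\ref{l:vectint}, and read off a central correction making $S$ isometric. You never say what this cocycle is, nor verify its continuity, and you yourself flag this as ``the real obstacle''---so the hard half is not proved. Moreover, the averaging machinery is not needed here at all, and without the key observation it is not even clear it can be set up: if the intertwiner is only known to be an arbitrary lattice automorphism, there is no reason its ``defect from being isometric'' is central. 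The decisive point, which is the entire content of the paper's proof, is that the intertwiner lies in $\gp$ and hence factors \emph{uniquely} as $T=m\phi$ with $m\in\ZAut(E)$ and $\phi\in\IAut(E)$ (equivalently $S=m'\phi$ with $m'=m_2^{-1}m\,\phi(m_1)$). Substituting $\rho^i_s=[m_i\pi^i_s(m_i)^{-1}]\pi^i_s$ into the intertwining relation and comparing the $\ZAut(E)$- and $\IAut(E)$-components via this uniqueness immediately yields $\phi\pi^1_s=\pi^2_s\phi$, so the isometric part of the given intertwiner already intertwines $\pi^1$ and $\pi^2$ isometrically. No Haar measure, crossed homomorphisms, or Lemma~\ref{l:vectint} enter at this stage; that apparatus was already spent in Theorem~\ref{t:characterization_rep_into_E} to produce the factorizations $\rho^i=m_i\pi^i m_i^{-1}$, and re-invoking it here replaces a one-line uniqueness argument by an unfinished construction.
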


\begin{proof}
 In this proof we use semidirect product notation. Suppose that $\rho^1$ and $\rho^2$ are order equivalent and let $T = (m, \phi) \in \Aut(E)$ be a positive intertwiner. Then, for all $s \in G$,
\begin{align}
\rho^1_s T &= ( m_1 \pi^1_s(m_1)^{-1}, \pi^1_s ) (m, \phi) = ( m_1 \pi^1_s(m_1)^{-1} \pi^1_s(m), \pi^1_s \phi ) \label{e:intertwiner1}\\
T \rho^2_s &= (m, \phi) ( m_2 \pi^2_s(m_2)^{-1}, \pi^2_s ) = (m \phi(m_2) \phi (\pi^2_s(m_2)^{-1}) , \phi \pi^2_s), \label{e:intertwiner2}
\end{align}
and since these are equal, $\phi$ is a positive isometric intertwiner between $\pi^1$ and $\pi^2$.

Conversely, let $\phi$ be a positive isometric intertwiner between $\pi^1$ and $\pi^2$. Then, by taking $m = m_1 \phi(m_2)^{-1}$ and $T = (m, \phi) \in \Aut(E)$, it is easily verified that, for all $s \in G$,
\[ (m_1 \pi^1_s(m_1)^{-1} \pi^1_s(m), \pi^1_s \phi ) = (m \phi(m_2) \phi (\pi^2_s(m_2)^{-1}) , \phi \pi^2_s), \]
 and so, by \eqref{e:intertwiner1} and \eqref{e:intertwiner2}, $T$ intertwines $\rho^1$ and $\rho^2$.
\end{proof}

\section{Positive representations in Banach sequence spaces}\label{s:sequence_spaces}

In this section we consider positive representations of groups in certain sequence spaces. First we show that every lattice automorphism can be written as a product of a central lattice automorphism and an isometric lattice automorphism. We are also able to show that a large class of sequence spaces satisfy Assumption~\ref{a:a1}, and an application of the results from Section~\ref{s:abstract_characterization} and Section~\ref{s:abstract_representations} then yields a description of compact groups of lattice automorphisms and of positive representations in these spaces with compact image. Using Theorem~\ref{t:group_isom_invariant_bands}, we obtain a decomposition of positive representations into band irreducibles, cf.\ Theorem~\ref{t:compsplitirreducible}. If the representation has compact image, then the irreducible bands in the decomposition in Theorem~\ref{t:compsplitirreducible} are finite dimensional.

We consider normalized symmetric Banach sequence spaces $E$, by which we mean Banach lattices of sequences equipped with the pointwise ordering and lattice operations such that if $x \in E$ and $y$ is a sequence such that $|y| \leq |x|$, then $y \in E$ and $\norm{y} \leq \norm{x}$, permutations of sequences in $E$ remain in $E$ with the same norm, and the standard unit vectors $\{e_n\}_{n \in \N}$ have norm 1. Important examples are the classical sequence spaces $c_0$ and $\ell^p$ for $1 \leq p \leq \infty$.

If $x$ is a sequence, then $x_{>N}$ denotes the sequence $x$ but with the first $N$ coordinates equal to $0$; similarly, $x_{\leq N}$ denotes the sequence $x$ with the coordinates greater than $N$ equal to $0$.

We will now show that a normalized symmetric Banach sequence space $E$ satisfies $\Aut(E) = \gp$. A lattice automorphism must obviously map positive atoms to positive atoms, so for each $T \in \Aut(E)$ and $n \in \N$, there exist a unique $m \in \N$ and $\lambda_{mn} > 0$ such that $Te_n = \lambda_{mn} e_m$. Since each $x \in E^+$ is the supremum of the $x_{\leq N}$ for $N \in \N$, the linear span of atoms is order dense, and hence the above relation determines $T$ uniquely. Therefore $T$ can be written as the product of an invertible positive multiplication operator and a permutation operator. We identify the group of permutation operators with $S(\N)$, so each $\phi \in S(\N)$ corresponds to the operator defined by $(\phi x)_n := x_{\phi^{-1}(n)}$ for $x \in E$ and $n \in \N$. The multiplication operators are identified with $\ell^\infty$, and by $\mpos$ we denote the set of elements $m \in \ell^\infty$ for which there exists a $\delta > 0$ such that $m_n \geq \delta$ for all $n \in \N$. We conclude that there exist an $m \in \mpos$ and a $\phi \in S(\N)$ such that $T = m \phi$. Conversely, an operator defined in this way is a lattice automorphism. Obviously $T$ is a central lattice automorphism if and only if its permutation part is trivial, and so the central lattice automorphisms equal $\mpos$. By Corollary \ref{c:pos_center_determines_center} the center $Z(E)$ equals $\ell^\infty$. Obviously $S(\N) = \IAut(E)$, and so $\Aut(E) = \mpos \cdot S(\N) = \ZAut(E) \cdot \IAut(E)$.

For $\phi \in S(\N)$ and $m \in \ell^\infty$, define $\phi(m) \in \ell^\infty$ by $\phi(m)_i := m_{\phi^{-1}(i)}$, the sequence $m$ permuted according to $\phi$. Then, for $n \in \N$,
\[ \phi m \phi^{-1} e_n = \phi m e_{\phi^{-1}(n)} = \phi m_{\phi^{-1}(n)} e_{\phi^{-1}(n)} = m_{\phi^{-1}(n)} e_n = \phi(m)_n e_n = \phi(m) e_n, \]
which shows that $m \mapsto \phi(m)$ equals the conjugation action of $\phi$ on $m$.

We will now show that Assumption \ref{a:a1} holds, if $E$ has order continuous norm. We will actually show more, namely that $p \colon \Aut(E) \to S(\N)$ is continuous. The following lemma is a preparation.

\begin{lemma}\label{l:SNtop}
 Suppose $E$ is a normalized symmetric Banach sequence space. Then the strong operator topology on $S(\N) \subset \Auts(E)$ is stronger than the topology of pointwise convergence. If $E$ has order continuous norm, then the topologies are equal.
\end{lemma}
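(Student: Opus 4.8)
The plan is a net argument with two halves. The strong operator topology is always the stronger of the two, and this needs only solidity of the norm: if a net $(\phi_i)$ in $S(\N)$ converges strongly to $\phi$, apply this to the unit vector $e_n$; since $\phi e_n = e_{\phi(n)}$ we get $\norm{e_{\phi_i(n)} - e_{\phi(n)}} \to 0$, and because $|e_a - e_b| = e_a + e_b \ge e_b$ whenever $a \ne b$, solidity forces $\norm{e_a - e_b} \ge \norm{e_b} = 1$; hence $\phi_i(n) = \phi(n)$ eventually for each $n$, which (as $\N$ is discrete) is exactly pointwise convergence.

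For the converse, under the order continuity hypothesis, suppose $\phi_i \to \phi$ pointwise and fix $x \in E$. Since $\phi^{-1}$ is isometric, $\norm{\phi_i x - \phi x} = \norm{\phi^{-1}\phi_i x - x}$, and $\psi_i := \phi^{-1}\phi_i$ still converges pointwise to $\id$, so I may assume $\phi = \id$. The only use of order continuity is the observation that $\norm{x_{>N}} \to 0$ for every $x \in E$: the sequence $(|x_{>N}|)_N$ decreases to $0$ in $E$ — the order infimum coinciding with the pointwise one, as $E$ carries the pointwise order — so $\norm{x_{>N}} = \norm{|x_{>N}|} \to 0$. Now, given $\eps > 0$, choose $N$ with $\norm{x_{>N}} < \eps/2$; since $\{1,\dots,N\}$ is finite and $\psi_i \to \id$ pointwise, there is $i_0$ such that $\psi_i$ fixes $1,\dots,N$ pointwise for all $i \ge i_0$. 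For such $i$, $\psi_i^{-1}$ also fixes $1,\dots,N$ pointwise and maps $\{N+1,N+2,\dots\}$ onto itself, so a direct coordinatewise check gives $\psi_i x - x = \psi_i(x_{>N}) - x_{>N}$; as $\psi_i$ is isometric this yields $\norm{\psi_i x - x} \le 2\norm{x_{>N}} < \eps$ for $i \ge i_0$, which is the required strong convergence.

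The only delicate point — the ``main obstacle,'' such as it is — lies in that last step: one must note that ``fixes $1,\dots,N$ pointwise'' is a property a permutation shares with its inverse (relevant because the action of $\psi_i$ on $E$ is through $\psi_i^{-1}$), and that it forces $\psi_i$ to permute the complementary tail $\{N+1,N+2,\dots\}$ internally, which is precisely what makes $(\psi_i x)_{>N}$ equal to $\psi_i(x_{>N})$. Everything else — the reduction to $\phi = \id$ and the tail estimate from order continuity — is routine.
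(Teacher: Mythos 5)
Your proof is correct and follows essentially the same route as the paper: distinct standard unit vectors are at norm distance at least $1$ (giving that strong convergence implies pointwise convergence), and under order continuity the tails $x_{>N}$ shrink in norm while permutations fixing $\{1,\dots,N\}$ permute the tail, giving the converse. The only cosmetic difference is that you reduce to the identity by multiplying with the isometry $\phi^{-1}$, whereas the paper invokes its Lemma~\ref{l:septopgrp} to reduce both directions to continuity at the identity; both reductions are equally valid.
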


\begin{proof}
If $\phi, \psi, \psi' \in S(\N)$ and $(\phi_i)$ is a net in $S(\N)$ converging pointwise to $\phi$, then $\psi \phi_i \psi' \to \psi \phi \psi'$ pointwise, so multiplication is separately continuous in the topology of pointwise convergence. We show that the identity map from $S(\N)$ equipped with the strong operator topology to $S(\N)$ equipped with the topology of pointwise convergence is continuous, and by Lemma \ref{l:septopgrp} we only have to verify continuity at the identity. Let $(\phi_i)$ be a net in $S(\N)$ converging strongly to the identity, and suppose there is an $n \in \N$ such that $\phi_i(n)$ does not converge to $n$. If $i$ is such that $\phi_i(n) \not= n$, then
$|\phi_i e_n - e_n| \geq e_n$ and so
\[ \norm{\phi_i e_n - e_n} \geq \norm{e_n} = 1. \]
It follows that $\phi_i e_n$ does not converge to $e_n$, which is a contradiction. This shows that $\phi_i$ converges pointwise to the identity.

Now suppose that $E$ has order continuous norm. We will show that the identity map from $S(\N)$ equipped with the topology of pointwise convergence to $S(\N)$ equipped with the strong operator topology is continuous, for which we again only have to verify continuity at the identity. Let $(\phi_i)$ be a net in $S(\N)$ converging pointwise to the identity. Let $x \in E$ and $\eps > 0$. Since $|x|_{>N} \downarrow 0$ for $N \to \infty$, by the order continuity of the norm we can choose $N$ such that $\norm{x_{>N}} = \norm{|x|_{>N}} < \eps / 2$. Choose $j$ such that  $\phi_i$ is the identity on all indices $n \leq N$, for all $i \geq j$. Then, for all $i \geq j$,
\[ \norm{\phi_i x - x} \leq \norm{\phi_i (x_{>N})} + \norm{x_{>N}} < \frac{\eps}{2} + \frac{\eps}{2} = \eps, \]
hence $\phi_i$ converges strongly to the identity.
\end{proof}

This lemma can be used to show that Assumption~\ref{a:a1} holds, if $E$ has order continuous norm.

\begin{lemma}
Let $E$ be a normalized symmetric Banach sequence space with order continuous norm. Then the homomorphism $p \colon \Auts(E) \to S(\N)$ is continuous.
\end{lemma}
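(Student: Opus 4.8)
The plan is to exploit the concrete description $\Aut(E) = \mpos \cdot S(\N)$ obtained above, under which $p(m\phi) = \phi$ (this map being well defined since $\ZAut(E) = \mpos$ and $\IAut(E) = S(\N)$ meet only in $\{I\}$), together with Lemma~\ref{l:SNtop}: as $E$ has order continuous norm, the strong operator topology on $S(\N) \subset \IAuts(E)$ coincides with the topology of pointwise convergence. So it is enough to show that whenever a net $(T_i)$ in $\Aut(E)$ converges strongly to $T \in \Aut(E)$, the permutation parts $\phi_i := p(T_i)$ satisfy $\phi_i(n) \to \phi(n)$ for every $n \in \N$, where $\phi := p(T)$. (One could instead invoke Lemma~\ref{l:septopgrp} to reduce to continuity of $p$ at the identity, since $p$ is a homomorphism and right multiplication is continuous both in $\Auts(E)$ — by separate continuity of multiplication in the strong operator topology — and in $S(\N)$ with the pointwise topology; but the direct argument below is no harder.)

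First I would write $T_i = m_i\phi_i$ with $m_i \in \mpos$, $\phi_i \in S(\N)$, and $T = m\phi$ likewise. Evaluating on a basis vector $e_n$ gives $T_i e_n = (m_i)_{\phi_i(n)}\, e_{\phi_i(n)}$ and $T e_n = m_{\phi(n)}\, e_{\phi(n)}$. Fix $n$ and assume, toward a contradiction, that $\phi_i(n) \not\to \phi(n)$; then along a subnet $\phi_i(n) \neq \phi(n)$, so $e_{\phi_i(n)}$ and $e_{\phi(n)}$ are supported on disjoint coordinates, whence
\[ |T_i e_n - T e_n| = (m_i)_{\phi_i(n)}\, e_{\phi_i(n)} + m_{\phi(n)}\, e_{\phi(n)} \geq m_{\phi(n)}\, e_{\phi(n)}, \]
using $(m_i)_{\phi_i(n)} > 0$ since $m_i \in \mpos$. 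Taking norms in the normalized symmetric sequence space $E$ yields $\norm{T_i e_n - T e_n} \geq m_{\phi(n)} > 0$ along the subnet, contradicting $T_i e_n \to T e_n$. Hence $\phi_i(n) \to \phi(n)$ for all $n$, i.e.\ $\phi_i \to \phi$ pointwise, and by Lemma~\ref{l:SNtop} also in the strong operator topology; thus $p$ is continuous.

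There is no real obstacle here: the argument is exactly the atom‑separation device already used in Lemma~\ref{l:SNtop}. The only points deserving attention are that $p$ is well defined (handled above) and that order continuity of the norm is genuinely needed — not for the disjointness estimate, but to promote pointwise convergence of the $\phi_i$ to convergence in the strong operator topology via Lemma~\ref{l:SNtop}, which is the topology on $S(\N) \subset \IAuts(E)$ against which continuity of $p$ is asserted. In particular this gives that $p|_G$ is continuous for every (compact) subgroup $G \subset \gps$, so Assumption~\ref{a:a1} holds for such $E$.
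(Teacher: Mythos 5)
Your proof is correct and follows essentially the same route as the paper: decompose $T_i = m_i\phi_i$, evaluate on the atoms $e_n$, use disjointness of distinct unit vectors to get a uniform lower bound on $\norm{T_ie_n - Te_n}$ when the permutation parts disagree, and then invoke Lemma~\ref{l:SNtop} (where order continuity enters) to pass from pointwise to strong convergence in $S(\N)$. The only cosmetic difference is that you argue at an arbitrary point (needing the lower bound $m_{\phi(n)}>0$ from $m\in\mpos$) instead of first reducing to continuity at the identity via Lemma~\ref{l:septopgrp}, as the paper does, where the corresponding bound is simply $\norm{e_n}=1$.
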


\begin{proof}
Again by Lemma~\ref{l:septopgrp} it suffices to show continuity at the identity. So let $(m_i \phi_i)$ be a net in $\Auts(E)$ that converges strongly to the identity, and suppose $\phi_i$ does not converge to the identity. Then by Lemma~\ref{l:SNtop} there is an $n \in \N$ such that $\phi_i(n)$ does not converge to $n$, and, for $i$ such that $\phi_i(n) \not= n$, we obtain
\[ \norm{m_i \phi_i e_n - e_n} = \norm{m_i e_{\phi_i^{-1}(n)} - e_n} \geq \norm{e_n} = 1. \]
This contradicts the assumption that $m_i \phi_i$ converges strongly to the identity, and so $\phi_i$ does converge to the identity.
\end{proof}

Note that $\ell^\infty$ is a normalized Banach sequence space and hence satisfies $\Aut(\ell^\infty) = \gp$. It does not have order continuous norm, but it is isometrically lattice isomorphic to $C(K)$ for some compact Hausdorff space $K$ by Kakutani's Theorem, and in Section~\ref{s:cont_functions} we will show, in Lemma~\ref{l:assumption_verified_for_coo}, that such spaces also satisfy Assumption~\ref{a:a1}.

Now we can apply the theory of the previous sections, in particular Theorem~\ref{t:characterization_mult_subgroups}, Theorem~\ref{t:characterization_rep_into_E} and Proposition~\ref{p:rep_isomorphic_isometric_rep}, to obtain the following characterization of compact subgroups of $\Auts(E)$ and of positive representations with compact image.

\begin{theorem}\label{t:charac_group_in_seqspace}
 Let $E$ be a normalized symmetric Banach sequence space with order continuous norm or $\ell^\infty$, and let $G \subset \Auts(E)$ be a compact group. Then there exist a unique compact group $H \subset S(\N)$ and an $m \in \mpos$ such that
\[ G = mHm^{-1}. \]
Conversely, if $H \subset S(\N)$ is a compact group and $m \in \mpos$, then $G \subset \Aut(E)$ defined by the above equation is a compact subgroup of $\Auts(E)$.
\end{theorem}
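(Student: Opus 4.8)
The plan is to obtain this theorem as a direct application of Theorem~\ref{t:characterization_mult_subgroups}, so that the only real work is to verify its two hypotheses for the spaces under consideration and to translate the notation. First I would recall what has already been established earlier in this section: for a normalized symmetric Banach sequence space $E$ one has $\Aut(E) = \gp$, with the identifications $\ZAut(E) = \mpos$ and $\IAut(E) = S(\N)$; consequently $\Auts(E) = \gps$ as topological groups, the strong operator topology being the relevant topology throughout. Under these identifications a compact subgroup $G \subset \Auts(E)$ is precisely a compact subgroup $G \subset \gps$, a compact subgroup $H \subset \IAuts(E)$ is a compact subgroup $H \subset S(\N)$, and the element $m \in \ZAut(E)$ appearing in Theorem~\ref{t:characterization_mult_subgroups} is an element of $\mpos$.

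Next I would check Assumption~\ref{a:a1}. If $E$ has order continuous norm, the immediately preceding lemma shows that the homomorphism $p \colon \Auts(E) \to S(\N)$ is continuous on all of $\Auts(E)$, which is in particular more than Assumption~\ref{a:a1} demands. If $E = \ell^\infty$, I would note that $\ell^\infty$ is itself a normalized symmetric Banach sequence space, so the factorization $\Aut(\ell^\infty) = \gp$ is available, and that by Kakutani's Theorem $\ell^\infty$ is isometrically lattice isomorphic to $C(K)$ for some compact Hausdorff space $K$; since Lemma~\ref{l:assumption_verified_for_coo} establishes that such $C(K)$ spaces satisfy Assumption~\ref{a:a1}, and since Assumption~\ref{a:a1} is evidently invariant under isometric lattice isomorphism, it follows that $\ell^\infty$ satisfies Assumption~\ref{a:a1} as well.

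With both hypotheses in place, Theorem~\ref{t:characterization_mult_subgroups} applies verbatim: for a compact group $G \subset \Auts(E) = \gps$ it yields a unique compact group $H \subset \IAuts(E) = S(\N)$ and an $m \in \ZAut(E) = \mpos$ with $G = mHm^{-1}$, and the converse direction is exactly the converse assertion of Theorem~\ref{t:characterization_mult_subgroups}. I do not expect a genuine obstacle here, since all the substantive arguments have already been carried out; the only points requiring a little care are the bookkeeping of the identifications $\ZAut(E) = \mpos$ and $\IAut(E) = S(\N)$, and, in the $\ell^\infty$ case, the routine transport of Assumption~\ref{a:a1} along the Kakutani isomorphism, which relies on the forward reference to Lemma~\ref{l:assumption_verified_for_coo} in Section~\ref{s:cont_functions}.
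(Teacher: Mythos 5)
Your proposal is correct and follows essentially the same route as the paper: the theorem is obtained by specializing Theorem~\ref{t:characterization_mult_subgroups}, using the identifications $\Aut(E)=\gp$, $\ZAut(E)=\mpos$, $\IAut(E)=S(\N)$ established earlier in the section, with Assumption~\ref{a:a1} verified via the preceding lemma in the order continuous case and via the Kakutani identification of $\ell^\infty$ with a $C(K)$ space together with Lemma~\ref{l:assumption_verified_for_coo} in the $\ell^\infty$ case. Your explicit remark that Assumption~\ref{a:a1} transfers along an isometric lattice isomorphism is exactly the (implicit) step the paper relies on, so there is no gap.
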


\begin{theorem}\label{t:charac_reps_into_seqspace}
 Let $E$ be a normalized symmetric Banach sequence space with order continuous norm or $\ell^\infty$, let $G$ be a group and let $\rho \colon G \to \Auts(E)$ be a positive representation with compact image. Then there exist a unique isometric positive representation $\pi \colon G \to S(\N)$ and an $m \in \mpos$ such that
\[ \rho_s = m \pi_s m^{-1}, \quad \forall s \in G. \]
The image of $\pi$ is compact. Conversely, any positive representation $\pi \colon G \to S(\N)$ with compact image and $m \in \mpos$ define a positive representation $\rho$ with compact image by the above equation. In this correspondence between $\rho$ and $\pi$, $\rho$ is strongly continuous if and only if $\pi$ is strongly continuous.

Moreover, if $\rho^1 = m_1 \pi^1 m_1^{-1}$ and $\rho^2 = m_2 \pi^2 m_2^{-1}$ are two positive representations with compact image, where $\pi^1$ and $\pi^2$ are isometric positive representations with compact image and $m^1, m^2 \in \mpos$, then $\rho^1$ and $\rho^2$ are order equivalent if and only if $\pi^1$ and $\pi^2$ are isometrically order equivalent.
\end{theorem}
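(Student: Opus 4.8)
The plan is to read off Theorem~\ref{t:charac_reps_into_seqspace} as the specialization of the abstract Theorem~\ref{t:characterization_rep_into_E} and Proposition~\ref{p:rep_isomorphic_isometric_rep} to the concrete situation at hand. By the analysis at the start of this section, $\Aut(E) = \gp$, and moreover $\ZAut(E) = \mpos$, $\IAut(E) = S(\N)$ (the permutation operators, which act isometrically on $E$ precisely because $E$ is a \emph{normalized symmetric} Banach sequence space), and $Z(E) = \ell^\infty$; the conjugation action of $\phi \in S(\N)$ on $m \in \ell^\infty$ is the permutation $\phi(m)$ of the sequence $m$. Hence $\gps = \Auts(E)$, and all the structure required in Section~\ref{s:abstract_representations} is present.

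Next I would check that $E$ satisfies Assumption~\ref{a:a1}. If $E$ has order continuous norm, the preceding lemma shows that $p \colon \Auts(E) \to S(\N)$ is continuous on all of $\Auts(E)$, which is stronger than what Assumption~\ref{a:a1} asks. If $E = \ell^\infty$, then $E$ is isometrically lattice isomorphic to $C(K)$ for some compact Hausdorff space $K$ by Kakutani's theorem, and Assumption~\ref{a:a1} holds for such spaces by Lemma~\ref{l:assumption_verified_for_coo} below (Section~\ref{s:cont_functions}). In either case Assumption~\ref{a:a1} is available.

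With these identifications, Theorem~\ref{t:characterization_rep_into_E} applies to $\rho$ verbatim: it yields the unique positive representation $\pi := p \circ \rho \colon G \to \IAuts(E) = S(\N)$ --- which is isometric, being $S(\N)$-valued --- and an $m \in \ZAut(E) = \mpos$ with $\rho_s = m \pi_s m^{-1}$ for all $s \in G$, together with compactness of the image of $\pi$, the converse direction, and the equivalence of strong continuity of $\rho$ and of $\pi$. Finally, the order-equivalence assertion is exactly Proposition~\ref{p:rep_isomorphic_isometric_rep} transported through the same dictionary $\ZAut(E) = \mpos$, $\IAut(E) = S(\N)$.

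The point of this argument is that there is essentially no obstacle left: all the analytic content --- the verification of Assumption~\ref{a:a1} (via Lemma~\ref{l:SNtop} together with the order-continuity estimate, or via the $C(K)$ case), the cohomological splitting in Lemma~\ref{l:vectint}, and the topological bookkeeping in Proposition~\ref{p:bijection_compact_subgroups} --- has already been carried out. The only thing that genuinely needs to be recorded is the translation: that ``isometric lattice automorphism of a normalized symmetric Banach sequence space'' means precisely ``permutation operator'', so that ``$\pi$ isometric and $\IAuts(E)$-valued'' and ``$\pi$ valued in $S(\N)$'' are the same statement; once that is observed, the theorem is a direct corollary.
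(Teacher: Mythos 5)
Your proposal is correct and follows exactly the route the paper takes: the paper states Theorem~\ref{t:charac_reps_into_seqspace} as a direct application of Theorem~\ref{t:characterization_rep_into_E} and Proposition~\ref{p:rep_isomorphic_isometric_rep}, after the identifications $\Aut(E)=\ZAut(E)\cdot\IAut(E)$, $\ZAut(E)=\mpos$, $\IAut(E)=S(\N)$ established at the start of Section~\ref{s:sequence_spaces}, with Assumption~\ref{a:a1} verified via the continuity of $p$ in the order continuous case and via the Kakutani isomorphism with $C(K)$ together with Lemma~\ref{l:assumption_verified_for_coo} for $\ell^\infty$. Your bookkeeping of these ingredients matches the paper's, so nothing is missing.
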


\begin{corol}\label{c:connected_rep_seq_space}
Let $E$ be a normalized symmetric Banach sequence space with order continuous norm or $\ell^\infty$, and let $G$ be a connected compact group and let $\rho \colon G \to \Auts(E)$ be a strongly continuous positive representation. Then $\rho_s = I$ for all $s \in G$.
\end{corol}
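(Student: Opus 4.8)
The plan is to reduce the statement to its isometric part via Theorem~\ref{t:charac_reps_into_seqspace} and then exploit the total disconnectedness of $S(\N)$. First I would apply Theorem~\ref{t:charac_reps_into_seqspace} to write $\rho_s = m\pi_s m^{-1}$ for all $s \in G$, where $\pi \colon G \to S(\N)$ is an isometric positive representation and $m \in \mpos$; since $\rho$ is strongly continuous, so is $\pi$ by that same theorem. As $m$ is a fixed central lattice automorphism, it then suffices to prove that $\pi_s = I$ for every $s \in G$, for this immediately yields $\rho_s = m I m^{-1} = I$.

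Next I would observe that $\pi(G)$ is a connected subset of $S(\N)$, in the strong operator topology, being the continuous image of the connected space $G$ under the strongly continuous homomorphism $\pi$. By Lemma~\ref{l:SNtop}, the strong operator topology on $S(\N)$ is at least as strong as the topology of pointwise convergence, and the latter is the subspace topology inherited from the product $\prod_{n \in \N} \N$ of discrete copies of $\N$, which is totally disconnected. Since any topology finer than a totally disconnected one is again totally disconnected (a set which is connected in the finer topology is connected in the coarser one, hence a singleton, as total disconnectedness passes to subspaces), $S(\N)$ equipped with the strong operator topology is totally disconnected as well. Therefore the connected set $\pi(G)$ must be a single point, and as it contains $\pi_e = I$ we conclude that $\pi_s = I$ for all $s \in G$, whence $\rho_s = I$.

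The proof is essentially a direct assembly of results already established, so I do not expect a serious obstacle; the only mildly non-formal ingredient is the elementary point-set topology remark that a topology finer than a totally disconnected topology remains totally disconnected, together with keeping track of the fact that the relevant conclusion about $\pi$ transfers back to $\rho$ through the fixed conjugating operator $m$.
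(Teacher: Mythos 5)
Your proof is correct and follows essentially the same route as the paper: reduce to the isometric part $\pi \colon G \to S(\N)$ via Theorem~\ref{t:charac_reps_into_seqspace} (the compactness of $\rho(G)$ needed there being immediate from strong continuity and compactness of $G$), then use connectedness of $G$ to force $\pi$ to be trivial. The only difference is cosmetic: you conclude by noting that $S(\N)$ is totally disconnected in the strong operator topology, since by Lemma~\ref{l:SNtop} that topology refines the (totally disconnected) topology of pointwise convergence, whereas the paper argues directly that each orbit $\{\pi_s e_n : s \in G\}$ is a connected subset of the norm-discrete set $\{e_m : m \in \N\}$ and hence a singleton; both arguments are valid and of the same depth.
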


\begin{proof}
 We know from Theorem~\ref{t:charac_reps_into_seqspace} that $\rho = m \pi m^{-1}$ for some strongly continuous isometric positive representation $\pi \colon G \to S(\N)$ and some $m \in \mpos$. For each $n \in \N$, by strong continuity the orbits $\{ \pi_s e_n: s \in G\} \subset \{e_m: m \in \N\}$ are connected. Since for $n \not= k$, $\norm{e_n - e_k} \geq 1$, the set $\{e_n: n \in \N \}$ is discrete. Hence the orbits consist of one point and $\pi$ is trivial. But then $\rho = m \pi m^{-1}$ is trivial as well.
\end{proof}

\begin{corol}\label{c:norm_discrete_seq_space}
 Let $E$ be a normalized symmetric Banach sequence space with order continuous norm or $\ell^\infty$, and let $G \subset \Auts(E)$ be a compact group. Then there exists a $\delta > 0$ such that, if $T,S \in G$, $T \not= S$ implies $\norm{T - S} \geq \delta$.
\end{corol}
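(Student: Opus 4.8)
The plan is to reduce the statement to the case of permutation operators by means of the structure theorem, and then to use that the standard unit vectors of $E$ all have norm one.

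First I would apply Theorem~\ref{t:charac_group_in_seqspace} to write $G = mHm^{-1}$ for a compact group $H \subset S(\N)$ and some $m \in \mpos$. Since $m$ is invertible, $\phi \mapsto m\phi m^{-1}$ is a bijection of $H$ onto $G$, so any two distinct elements of $G$ are of the form $T = m\phi m^{-1}$ and $S = m\psi m^{-1}$ with $\phi, \psi \in H \subset S(\N)$ and $\phi \neq \psi$.

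Next I would prove the key estimate $\norm{\phi - \psi} \geq 1$ for distinct $\phi, \psi \in S(\N)$. Pick $k \in \N$ with $\phi(k) \neq \psi(k)$. Then $(\phi - \psi)e_k = e_{\phi(k)} - e_{\psi(k)}$, and as these two unit vectors are disjointly supported, $\left| (\phi - \psi)e_k \right| = e_{\phi(k)} + e_{\psi(k)} \geq e_{\phi(k)} \geq 0$. By solidness of the norm on $E$ together with the normalization $\norm{e_{\phi(k)}} = 1$, this gives $\norm{(\phi - \psi)e_k} \geq 1$, and since $\norm{e_k} = 1$ we conclude $\norm{\phi - \psi} \geq 1$.

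Finally I would transfer the bound back to $G$ using that conjugation by $m$ is bi-Lipschitz on $\mathcal{L}(E)$: from $\phi - \psi = m^{-1}(T - S)m$ one gets $\norm{\phi - \psi} \leq \norm{m}\,\norm{m^{-1}}\,\norm{T - S}$, hence $\norm{T - S} \geq (\norm{m}\,\norm{m^{-1}})^{-1}$, so $\delta := (\norm{m}\,\norm{m^{-1}})^{-1}$ works. I do not expect a genuine obstacle here; the only point requiring a little care is to confirm that the reduction covers the $\ell^\infty$ case as well, but this is already part of Theorem~\ref{t:charac_group_in_seqspace}, and the remaining computations are immediate once the structure theorem is available.
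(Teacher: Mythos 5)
Your proposal is correct and follows essentially the same route as the paper: the paper's proof likewise invokes Theorem~\ref{t:charac_group_in_seqspace} to write $G=mHm^{-1}$ and uses the bound $\norm{\phi-\psi}\geq 1$ for distinct permutation operators, which you simply spell out (via the disjointly supported unit vectors and solidity of the norm) together with the explicit conjugation estimate $\delta=(\norm{m}\,\norm{m^{-1}})^{-1}$ that the paper leaves implicit.
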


\begin{proof}
 If $\phi \not= \psi \in S(\N)$, then $\norm{\phi - \psi} \geq 1$, and so the corollary follows from Theorem \ref{t:charac_group_in_seqspace}.
\end{proof}

In \cite[Corollary~3.10]{schaeferwolffarendt}, by studying the spectrum of lattice homomorphisms, this corollary is shown with $\delta = \sqrt{3} \sup\{ \norm{T}: T \in G\}$, for arbitrary uniformly bounded groups of positive operators in complex Banach lattices.

Now we will examine invariant structures under these strongly continuous positive representations. In a Banach lattice with order continuous norm, the collection of bands and the collection of closed ideal coincide by \cite[Corollary~2.4.4]{meyernieberg}. All bands in Banach sequence spaces are of the form $\{x \in E: x_n = 0 \quad \forall n \in \N \setminus A\}$ for some $A \subset \N$; this follows easily from the characterization of bands as disjoint complements. Clearly this collection coincides with the collection of projection bands and the collection of principal bands. We call a series $\sum_{n=1}^\infty x_n$ in a Riesz space unconditionally order convergent to $x$ if $\sum_{n=1}^\infty x_{\pi(n)}$ converges in order to $x$ for every permutation $\pi$ of $\N$.

\begin{theorem}\label{t:compsplitirreducible}
Let $E$ be a normalized symmetric Banach sequence space, let $G$ be a group and let $\rho \colon G \to \Aut(E)$ be a positive representation. Then $E$ splits into band irreducibles, in the sense that there exists an $\alpha$ with $1 \leq \alpha \leq \infty$ such that the set of invariant and band irreducible bands $\{B_n\}_{1 \leq n \leq \alpha}$ \textup{(}if $\alpha < \infty$\textup{)} or $\{B_n\}_{1 \leq n < \infty}$
\textup{(}if $\alpha = \infty$\textup{)} satisfies
\begin{equation}\label{e:orderconv}
  x = \sum_{n=1}^\alpha P_n x \quad \forall x \in E,
\end{equation}
where $P_n \colon E \to B_n$ denotes the band projection, and the series is unconditionally order convergent, hence, in the case that $E$ has order continuous norm, unconditionally convergent.

Moreover, if $\rho$ has compact image and $E$ has order continuous norm or $E = \ell^\infty$, then every invariant and band irreducible band is finite dimensional, and so $\alpha = \infty$.
\end{theorem}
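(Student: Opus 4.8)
The plan is to reduce everything to a combinatorial fact about the action of a permutation group on $\N$. As shown above in this section, $\Aut(E) = \gp$ with $\ZAut(E) = \mpos$ (the support-preserving positive multiplication operators) and $\IAut(E) = S(\N)$ (the permutation operators), and the factorization of an automorphism into these two parts is unique. Writing $\rho_s = m_s\phi_s$ with $m_s \in \mpos$ and $\phi_s \in S(\N)$, the set $H := \{\phi_s : s \in G\} = p(\rho(G))$ is a subgroup of $S(\N)$. The bands of $E$ are exactly the sets $B_A := \{x \in E : x_n = 0 \text{ for } n \notin A\}$ with $A \subset \N$; each is a projection band, with band projection $P_A$ given by coordinate restriction to $A$, and $\phi B_A = B_{\phi(A)}$ for $\phi \in S(\N)$. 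Since $m_s$ preserves $B_A$, we get $\rho_s B_A = B_{\phi_s(A)}$, so $B_A$ is $\rho$-invariant if and only if $\phi_s(A) = A$ for all $s \in G$. Hence the $\rho$-invariant bands correspond to the $H$-invariant subsets of $\N$, i.e.\ to unions of $H$-orbits, and such a band is in addition band irreducible precisely when the subset is a single $H$-orbit. (This correspondence also follows from Theorem~\ref{t:group_isom_invariant_bands}.)

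Next, let $\{O_n\}_{1 \le n \le \alpha}$, with $1 \le \alpha \le \infty$, enumerate the $H$-orbits on $\N$; they partition $\N$, and I put $B_n := B_{O_n}$, so that by the previous paragraph $\{B_n\}$ is precisely the family of invariant band irreducible bands. For $x \in E^+$ the finite partial sums $\sum_{n \le N} P_n x$ form an increasing sequence whose pointwise supremum, which is also its supremum in $E$, is $x$; thus $\sum_n P_n x = x$ in order, and the same holds after any rearrangement since the $O_n$ are pairwise disjoint with union $\N$. For general $x$, writing $x = x^+ - x^-$ and using $\bigl|\sum_{n \le N} P_n x - x\bigr| \le \bigl(x^+ - \sum_{n \le N} P_n x^+\bigr) + \bigl(x^- - \sum_{n \le N} P_n x^-\bigr) \downarrow 0$ gives \eqref{e:orderconv} with unconditional order convergence. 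If, in addition, the norm of $E$ is order continuous, these order-convergent partial sums converge in norm, and unconditionality is preserved.

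For the final assertion, assume $\rho$ has compact image and $E$ has order continuous norm or $E = \ell^\infty$. Then Assumption~\ref{a:a1} holds for $E$, so $p|_{\rho(G)}$ is continuous and $H = p(\rho(G))$ is a \emph{compact} subgroup of $S(\N)$ in the strong operator topology. The key point is that for each $n \in \N$ the evaluation map $\phi \mapsto \phi(n)$ is continuous from $S(\N)$, with the strong operator topology, to $\N$ with the discrete topology: if $\phi_i \to \phi$ strongly then $\norm{e_{\phi_i(n)} - e_{\phi(n)}} = \norm{\phi_i e_n - \phi e_n} \to 0$, while $\norm{e_j - e_k} \ge 1$ whenever $j \ne k$, so $\phi_i(n)$ is eventually equal to $\phi(n)$. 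Hence each orbit $O_n = \{\phi(n) : \phi \in H\}$ is the continuous image of the compact set $H$ inside a discrete space, so it is finite, and therefore $B_n = B_{O_n}$ is finite dimensional with $\dim B_n = |O_n|$. Finally, since the $O_n$ partition the infinite set $\N$ into finite sets, there are infinitely many of them, i.e.\ $\alpha = \infty$.

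I expect the last step to be the only real obstacle, and it rests on two ingredients: that $\{e_n\}_{n \in \N}$ is uniformly discrete in $E$, which makes the orbit maps locally constant in the strong operator topology, and that $H$ is compact, which is exactly where the hypothesis on $E$ (through Assumption~\ref{a:a1}) is used. One can also avoid appealing to Assumption~\ref{a:a1} here by working directly with the compact set $\rho(G) \subset \mathcal{L}_s(E)$: uniform boundedness of $\rho(G)$ together with $\rho_s^{-1} \in \rho(G)$ bounds the multipliers $m_s$ away from $0$ uniformly, so $\{\rho_s e_n : s \in G\}$ is a uniformly separated set of positive multiples of distinct unit vectors, which by total boundedness forces only finitely many of the $\phi_s(n)$ to occur.
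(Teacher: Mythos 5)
Your proof is correct and follows essentially the same route as the paper: invariant bands correspond to unions of orbits of $p(\rho(G)) \subset S(\N)$ via Theorem~\ref{t:group_isom_invariant_bands} and the parametrization of bands by subsets of $\N$, the unconditional order convergence comes from the decreasing tails, and in the compact case continuity of $p|_{\rho(G)}$ (Assumption~\ref{a:a1}) together with the discreteness of $\{e_n\}$ forces the orbits, hence the irreducible bands, to be finite. Your closing observation that compactness of $\rho(G)$ alone (via the uniform lower bound on the multipliers $m_s$) already yields finite orbits without invoking Assumption~\ref{a:a1} is a small self-contained shortcut the paper does not use, but the main argument coincides with the paper's.
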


\begin{proof}
We define the isometric positive representation $\pi := p \circ \rho \colon G \to S(\N)$, which has the same invariant bands as $\rho$ by Theorem~\ref{t:group_isom_invariant_bands}. It follows immediately from the above parametrization of the bands of $E$ that the irreducible bands are given by the orbits $\pi(G)e_n$ of the standard unit vectors $e_n$. In the case that $\rho(G)$ is compact and $E$ has order continuous norm or $E = \ell^\infty$, the map $p|_{\rho(G)}$ is continuous, so these orbits $p(\rho(G))e_n$ are compact in $E$, and hence consist of finitely many standard unit vectors, and so the irreducible bands are finite dimensional and there are countable infinitely many of them. The unconditional order convergence of the series \eqref{e:orderconv} follows from the fact that $|\pi(x)|_{\geq N} \downarrow 0$ as $N \to \infty$ for any permutation $\pi$ of $\N$.
\end{proof}

In the order continuous case, the series \eqref{e:orderconv} need not be absolutely convergent, which can be seen by taking the trivial group acting on a normalized symmetric Banach sequence space with order continuous norm not contained in $\ell^1$ and taking an $x \in E$ not in $\ell^1$.

\section{Positive representations in $C_0(\Omega)$}\label{s:cont_functions}

In this section we consider the space $C_0(\Omega)$, where $\Omega$ is a locally compact Hausdorff space. First we show that every lattice automorphism of $C_0(\Omega)$ is the product of a central lattice automorphism and an isometric lattice automorphism. We will also show that $C_0(\Omega)$ satisfies Assumption \ref{a:a1}, from which we obtain a characterization of compact groups and representations of positive groups with compact image, using the results from Section~\ref{s:abstract_characterization} and Section~\ref{s:abstract_representations}. As we will explain below, contrary to the sequence space case one cannot expect to find a direct sum type decomposition of an arbitrary strongly continuous positive representation into band irreducibles for general $C_0(\Omega)$. More investigation is necessary to determine whether these representations are still built up, in an appropriate alternative way, from band irreducible representations. As a preparation for such future research, we collect some results about the structure of invariant ideals, bands and projection bands, cf.\ Proposition~\ref{p:invariance_coo}.

Analogously to the sequence space case from Section~\ref{s:sequence_spaces}, we will start by showing that $\Aut(C_0(\Omega)) = \ZAut(C_0(\Omega)) \cdot \IAut(C_0(\Omega))$. Elements of $\Homeo$, the group of homeomorphisms of $\Omega$, are viewed as elements of $\AutC$ by $\phi x := x \circ \phi^{-1}$ for $x \in C_0(\Omega)$. The set of multiplication operators by continuous bounded functions is denoted by $C_b(\Omega)$, and by $\cbo$ we denote the elements $m \in C_b(\Omega)$ such that there exists a $\delta > 0$ such that $m(\omega) \geq \delta$ for all $\omega \in \Omega$. It follows from \cite[Theorem~3.2.10]{meyernieberg} that every $T \in \Aut(C_0(\Omega))$ can be written uniquely as a product of an element $m \in \cbo$ and an operator $\phi \in \Homeo$, so $T = m \phi$. Conversely, any $T$ defined in this way is a lattice automorphism. It is easy to see that $T \in Z(C_0(\Omega))$ if and only if its part in $\Homeo$ is trivial, so $\cbo$ is the group of central lattice automorphisms, and by Corollary \ref{c:pos_center_determines_center}, $Z(C_0(\Omega)) \cong C_b(\Omega)$. Obviously $\Homeo = \IAut(E)$, and so $\Aut(C_0(\Omega)) = \cbo \cdot \Homeo = \ZAut(C_0(\Omega)) \cdot \IAut(C_0(\Omega))$.

For $\phi \in \Homeo$ and $m \in \cbo$, define $\phi(m) := m \circ \phi^{-1} \in \cbo$. Then, for $\omega \in \Omega$ and $x \in C_0(\Omega)$,
\[ \phi m \phi^{-1} x(\omega) = m \phi^{-1} x(\phi^{-1}(\omega)) = m(\phi^{-1}(\omega)) \phi^{-1} x (\phi^{-1}(\omega)) = m(\phi^{-1}(\omega)) x(\omega), \]
so $\phi(m)$ equals the conjugation action of $\phi$ on $m$.

Our next goal is to show that Assumption \ref{a:a1} is satisfied, and for that we have to examine $\Homeo$. The topological structure of $\Homeo$ can be described by the following lemma, the proof of which is given by \cite[Definition~1.31]{williams}, \cite[Lemma~1.33]{williams} and \cite[Remark~1.32]{williams}.

\begin{lemma}\label{l:top_on_homeo}
The strong operator topology on $\Homeo$ equals the topology with as subbasis elements of the form
\[ U(K, K', V, V') := \{\phi \in \Homeo: \phi(K) \subset V \mbox{ and } \phi^{-1}(K') \subset V' \} \]
with $K$ and $K'$ compact and $V$ and $V'$ open. A net $(\phi_i)$ in $\Homeo$ converges to $\phi \in \Homeo$ if and only if $\omega_i \to \omega \in \Omega$ implies that $\phi_i(\omega_i) \to \phi(\omega)$ and $\phi_i^{-1}(\omega_i) \to \phi^{-1}(\omega)$.
\end{lemma}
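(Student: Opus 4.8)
The plan is to route the argument through the net characterisation, proving it first and then reading off the description of the strong operator topology by the subbasis $\{U(K,K',V,V')\}$. Throughout, recall that every $\phi\in\Homeo$ acts on $C_0(\Omega)$ as an isometry, since $\|x\circ\phi^{-1}\|_\infty=\|x\|_\infty$. I would begin with the implication ``strong operator convergence $\phi_i\to\phi$ $\Rightarrow$ the net condition''. Let $\omega_i\to\omega$ in $\Omega$ and let $W$ be an open neighbourhood of $\phi(\omega)$; using that $\Omega$ is locally compact Hausdorff, pick $x\in C_c(\Omega)$ with $0\le x\le 1$, $x(\omega)=1$ and $\supp x\subset\phi^{-1}(W)$. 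Evaluating $\phi_i x-\phi x$ (which tends to $0$ uniformly) at the point $\phi_i(\omega_i)$ gives $|x(\omega_i)-x(\phi^{-1}\phi_i(\omega_i))|\to 0$; since $x(\omega_i)\to x(\omega)=1$, eventually $x(\phi^{-1}\phi_i(\omega_i))>0$, so $\phi^{-1}\phi_i(\omega_i)\in\supp x\subset\phi^{-1}(W)$ and hence $\phi_i(\omega_i)\in W$ eventually; as $W$ was arbitrary, $\phi_i(\omega_i)\to\phi(\omega)$. The statement $\phi_i^{-1}(\omega_i)\to\phi^{-1}(\omega)$ is obtained in the same way, evaluating $\phi_i x-\phi x$ at $\omega_i$ itself and using continuity of $\phi^{-1}$ (alternatively, inversion is strongly continuous on $\Homeo$ by Lemma~\ref{l:group_is_top_group}, $\Homeo$ being uniformly bounded, so this follows from the first half applied to the net $(\phi_i^{-1})$).

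Next, the converse: assume the net condition and suppose $\|\phi_i x-\phi x\|_\infty\not\to 0$ for some $x\in C_0(\Omega)$. Passing to a subnet there are $\eta_i\in\Omega$ and $\delta>0$ with $|x(\phi_i^{-1}\eta_i)-x(\phi^{-1}\eta_i)|\ge\delta$ for all $i$; then for each $i$ at least one of $\phi_i^{-1}\eta_i,\ \phi^{-1}\eta_i$ lies in the compact set $\{\,|x|\ge\delta/2\,\}$, so after a further subnet we may assume that one of them, say $\phi^{-1}\eta_i$, converges to some $\zeta\in\Omega$. Then $\eta_i=\phi(\phi^{-1}\eta_i)\to\phi(\zeta)$ by continuity of $\phi$, and the net condition (applied to the inverses, with base points $\eta_i\to\phi(\zeta)$) gives $\phi_i^{-1}\eta_i\to\phi^{-1}\phi(\zeta)=\zeta$; hence $x(\phi_i^{-1}\eta_i)-x(\phi^{-1}\eta_i)\to x(\zeta)-x(\zeta)=0$, contradicting the lower bound. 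The case where instead $\phi_i^{-1}\eta_i$ stays in the compact set is symmetric, applying the net condition to the $\phi_i$ with base points $\phi_i^{-1}\eta_i$. This shows that strong operator convergence is equivalent to the net condition.

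Finally I would identify the strong operator topology with the topology $\tau$ generated by the $U(K,K',V,V')$. Since a topology is determined by which nets converge (equivalently by its closed sets), it suffices to show (a) every $U(K,K',V,V')$ is strongly open, and (b) every $\tau$-convergent net satisfies the net condition. For (a): if $\phi\in U(K,K',V,V')$ and $\phi_i\to\phi$ strongly but $\phi_i\notin U(K,K',V,V')$ cofinally, then along a subnet we may pick $\omega_i\in K$ with $\phi_i(\omega_i)\notin V$ (or the mirror statement for $K',V'$); a further subnet has $\omega_i\to\omega\in K$, and the net condition gives $\phi_i(\omega_i)\to\phi(\omega)\in\phi(K)\subset V$, so $\phi_i(\omega_i)\in V$ eventually, a contradiction; hence the complement of $U(K,K',V,V')$ is strongly closed. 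For (b): let $\phi_i\to\phi$ in $\tau$ and $\omega_i\to\omega$; given an open neighbourhood $V$ of $\phi(\omega)$, shrink it to an open $V_0\ni\phi(\omega)$ with $\overline{V_0}\subset V$ and $\overline{V_0}$ compact, and put $K:=\phi^{-1}(\overline{V_0})$, which is compact, contains $\omega$ in its interior $\phi^{-1}(V_0)$, and satisfies $\phi(K)=\overline{V_0}\subset V$. Then $\tau$-convergence yields $\phi_i(K)\subset V$ eventually, while $\omega_i\to\omega$ yields $\omega_i\in K$ eventually, so $\phi_i(\omega_i)\in V$ eventually; thus $\phi_i(\omega_i)\to\phi(\omega)$, and the statement for the inverses follows symmetrically. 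Combining (a) and (b) gives $\tau=$ the strong operator topology.

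The step I expect to be the main obstacle is the converse half of the net characterisation: a priori the points $\eta_i$ witnessing a failure of uniform convergence of $\phi_i x$ to $\phi x$ could escape every compact set, and the only leverage available is that $C_0$-functions are uniformly small off compacta, which forces $\eta_i$ — after transport by $\phi_i$ or by $\phi$ — into a compact superlevel set of $|x|$; once that compactness is secured, the net condition together with ordinary continuity of $\phi$ and $\phi^{-1}$ closes the gap. The remaining ingredients (the Urysohn-type functions, the manipulation with $\phi^{-1}\phi_i$, and the general facts relating topologies, closed sets and net convergence) are routine, the only persistent nuisance being that every extraction is merely cofinal, so one must keep careful track of which subnet each ``eventually'' refers to.
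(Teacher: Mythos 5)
The paper offers no proof of this lemma at all (it just points to \cite[Definition~1.31, Remark~1.32, Lemma~1.33]{williams}), so your self-contained argument is necessarily your own route, and most of it is sound: the forward direction (strong convergence $\Rightarrow$ the pointwise condition, via Urysohn functions and Lemma~\ref{l:group_is_top_group} for the inverses) is correct, and so are your steps (a) and (b); in (a) the appeal to ``the net condition'' for a subnet is harmless, because that subnet still converges strongly, so your Step~1 applies to it as a net in its own right.

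The genuine gap is in the converse direction, at the words ``the net condition (applied \dots with base points $\eta_i$)'': after your two extractions the points $\eta$ are indexed by the subnet's directed set, whereas the hypothesis, as stated, only yields the implication for nets $(\omega_i)$ carried by the \emph{same} directed set as the original $(\phi_i)$. This is not subnet bookkeeping: with the literal same-index reading, the implication you are proving is false. Indeed, let $\Omega = \beta\N$, $\phi = \id$, and let $\phi_n$ be the homeomorphic extension to $\beta\N$ of the transposition $\sigma_n$ of $n$ and $n+1$. Convergent sequences in $\beta\N$ are eventually constant, and $\phi_n$ fixes every free ultrafilter and every integer $k \notin \{n,n+1\}$, so every sequence $\omega_n \to \omega$ satisfies $\phi_n(\omega_n) \to \omega$ and $\phi_n^{-1}(\omega_n) \to \omega$; yet for $x \in C(\beta\N)$ extending $k \mapsto k \bmod 2$ one has $\norm{\phi_n x - x}_\infty = 1$ for all $n$, and with $K = K' = V = V'$ the clopen compact closure of the even integers, $\phi_n \notin U(K,K',V,V')$ for all $n$, so $(\phi_n)$ converges to $\id$ in neither topology. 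Hence your Step~2 (and with it the inclusion of the strong operator topology in $\tau$, which you route through Step~2) needs the criterion in its subnet-stable form --- ``for every subnet $(\phi_{i_j})$ and every net $\omega_j \to \omega$ indexed by the subnet's directed set\dots'' --- which is exactly what your Step~1 delivers in the forward direction and what the paper's later application in Lemma~\ref{l:assumption_verified_for_coo} actually verifies; alternatively, restrict to settings where a diagonal construction produces a genuine $I$-indexed witness net (e.g.\ $\Omega$ second countable), or prove the equality of the two topologies directly by a finite-cover Urysohn argument so that it does not depend on the net characterization. As written, the invocation of the hypothesis for subnet-indexed base points is a real gap, not a cosmetic one.
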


Before we can show the validity of Assumption \ref{a:a1} for $C_0(\Omega)$, we need a small lemma.

\begin{lemma}\label{l:special_convergence}
 Let $(m_i)$ be a net in $\cbo$ and $(\phi_i)$ be a net in $\Homeo$ such that $m_i \phi_i$ converges strongly to the identity. If $\omega_i \to \omega \in \Omega$, then $\phi_i^{-1}(\omega_i) \to \omega$.
\end{lemma}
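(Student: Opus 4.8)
The plan is to argue directly from the definition of strong convergence in $\coo$, testing against a bump function concentrated near $\omega$. Recall that under the identifications set up above, $(m_i\phi_i x)(\eta) = m_i(\eta)\,x(\phi_i^{-1}(\eta))$ for $x \in \coo$ and $\eta \in \Omega$, and that $m_i(\eta) > 0$ for every $\eta$ since $m_i \in \cbo$. The goal $\phi_i^{-1}(\omega_i) \to \omega$ means, by the definition of the topology on $\Omega$, that for each open neighbourhood $W$ of $\omega$ one has $\phi_i^{-1}(\omega_i) \in W$ eventually, so I would fix such a $W$ and prove exactly that.

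First, using local compactness of $\Omega$, I would choose an open set $V$ with compact closure and $\omega \in V \subset \overline V \subset W$, and then, by Urysohn's lemma for locally compact Hausdorff spaces, a function $x \in C_c(\Omega) \subset \coo$ with $0 \le x \le 1$, $x(\omega) = 1$ and $\supp x \subset V$; in particular $\{\eta : x(\eta) > 0\} \subset W$.

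Next I would examine the scalars $(m_i\phi_i x)(\omega_i) = m_i(\omega_i)\,x(\phi_i^{-1}(\omega_i))$. On the one hand $|(m_i\phi_i x)(\omega_i) - x(\omega_i)| \le \norm{m_i\phi_i x - x} \to 0$ by the assumed strong convergence of $m_i\phi_i$ to the identity, while $x(\omega_i) \to x(\omega) = 1$ because $x$ is continuous and $\omega_i \to \omega$; hence $(m_i\phi_i x)(\omega_i) \to 1$. Therefore $m_i(\omega_i)\,x(\phi_i^{-1}(\omega_i)) > 0$ eventually, and since $m_i(\omega_i) > 0$ this forces $x(\phi_i^{-1}(\omega_i)) > 0$ eventually, whence $\phi_i^{-1}(\omega_i) \in \{\eta : x(\eta) > 0\} \subset W$ eventually. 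As $W$ was an arbitrary neighbourhood of $\omega$, this proves $\phi_i^{-1}(\omega_i) \to \omega$.

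I do not expect a genuine obstacle here; the only delicate point is having enough test functions available, and this is precisely where local compactness of $\Omega$ is used — to produce a compactly supported Urysohn function that equals $1$ at $\omega$ and vanishes outside $W$. The remainder is a routine manipulation of nets and the sup-norm, requiring no explicit estimates.
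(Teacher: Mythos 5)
Your proof is correct and is essentially the paper's argument run directly rather than by contradiction: both test strong convergence against a Urysohn bump function concentrated near $\omega$ and evaluate at $\omega_i$ via $(m_i\phi_i x)(\omega_i)=m_i(\omega_i)\,x(\phi_i^{-1}(\omega_i))$. The paper instead passes to a subnet with $\phi_i^{-1}(\omega_i)$ outside a fixed neighborhood and takes $x$ identically $1$ on a compact neighborhood of $\omega$, deriving the contradiction $\norm{m_i\phi_i x - x}\geq 1$, while you use continuity of $x$ and positivity of $m_i$ to conclude directly; the underlying idea is the same.
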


\begin{proof}
 Suppose that there exists a net $(\omega_i)$ converging to $\omega$ such that $\phi_i^{-1}(\omega_i)$ does not converge to $\omega$. By passing to a subnet we may assume that there exists an open neighborhood $U$ of $x$ such that $\phi_i^{-1}(\omega_i) \notin U$ for all $i$. Take a compact neighborhood $K$ of $x$ such that $K \subset U$, then by passing to a subnet we may assume that $\omega_i \in K$ for all $i$. By \cite[Lemma~1.41]{williams}, a version of Urysohn's Lemma, there exists a function $x \in C_c(\Omega)$ such that $x$ is identically one on $K$ and zero outside of $U$. Then
\begin{align*}
 \norm{m_i \phi_i x - x} &\geq | [m_i \phi_i x] (\omega_i) - x(\omega_i) | \\
&= |m_i(\omega) x(\phi_i^{-1}(\omega)) - x(\omega_i) | = |0-1| = 1,
\end{align*}
which contradicts the strong convergence of $m_i \phi_i$ to the identity.
\end{proof}

Recall that $p \colon \Aut(C_0(\Omega)) \to \Homeo$ denotes the map $m \phi \mapsto \phi$.

\begin{lemma}\label{l:assumption_verified_for_coo}
 Let $G$ be a compact subgroup of $\AutC$. Then the map $p|_G \colon G \to \Homeo$ is continuous.
\end{lemma}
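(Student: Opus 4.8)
The plan is to reduce the claim to continuity at the identity, via Lemma~\ref{l:septopgrp}. Both $G$ and $\Homeo$ are subsets of $\Auts(C_0(\Omega))$, in which multiplication is separately continuous, so right multiplication is continuous in both; since $p|_G$ is a group homomorphism, Lemma~\ref{l:septopgrp} reduces the problem to showing that $p|_G$ is continuous at $I$. Concretely, I would take a net $(m_i\phi_i)$ in $G$, with $m_i \in \cbo$ and $\phi_i \in \Homeo$, converging strongly to $I$, and show that $\phi_i \to \id_\Omega$ in $\Homeo$. By the convergence criterion of Lemma~\ref{l:top_on_homeo}, it suffices to prove that whenever $\omega_i \to \omega$ in $\Omega$, one has both $\phi_i^{-1}(\omega_i) \to \omega$ and $\phi_i(\omega_i) \to \omega$.

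The first of these is exactly the content of Lemma~\ref{l:special_convergence}. For the second, I would pass to inverses: since $G$ is compact, Corollary~\ref{c:group_is_top_group} shows $G$ is a topological group, so inversion is continuous and $(m_i\phi_i)^{-1} \to I$ strongly. Using the factorization discussed in Section~\ref{s:abstract_characterization} together with the identity $\phi(m) = m\circ\phi^{-1}$, we have $(m_i\phi_i)^{-1} = \phi_i^{-1}m_i^{-1} = (m_i^{-1}\circ\phi_i)\,\phi_i^{-1}$, where $m_i^{-1}\circ\phi_i \in \cbo$ because $m_i^{-1} \in \cbo$ and composition with a homeomorphism preserves $\cbo$. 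Thus $(m_i\phi_i)^{-1}$ is again a net of the form (central)$\cdot$(isometric) converging strongly to $I$, now with isometric part $\phi_i^{-1}$. Applying Lemma~\ref{l:special_convergence} to this net yields, for $\omega_i \to \omega$, that $(\phi_i^{-1})^{-1}(\omega_i) = \phi_i(\omega_i) \to \omega$, which is what remained to be shown. By Lemma~\ref{l:top_on_homeo} we conclude $\phi_i \to \id_\Omega$, so $p|_G$ is continuous at $I$, and Lemma~\ref{l:septopgrp} completes the proof.

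The argument is short given the machinery already in place; the one point worth care — and the only place where compactness of $G$, rather than mere uniform boundedness, is used — is the observation that Lemma~\ref{l:special_convergence} supplies only the ``$\phi_i^{-1}$'' half of the Lemma~\ref{l:top_on_homeo} criterion, and that the complementary ``$\phi_i$'' half is recovered by applying the same lemma to the inverse net, which is legitimate precisely because inversion is continuous on the compact group $G$. Everything else is routine bookkeeping with the central/isometric factorization.
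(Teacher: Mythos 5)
Your proof is correct and follows essentially the same route as the paper: reduce to continuity at the identity via Lemma~\ref{l:septopgrp}, use Lemma~\ref{l:special_convergence} for the $\phi_i^{-1}$ half of the Lemma~\ref{l:top_on_homeo} criterion, and then apply the same lemma to the inverse net $(m_i\phi_i)^{-1}=(m_i^{-1}\circ\phi_i)\,\phi_i^{-1}$, whose strong convergence to the identity comes from continuity of inversion on the compact group $G$. The paper invokes Lemma~\ref{l:group_is_top_group} directly where you cite Corollary~\ref{c:group_is_top_group}, but this is the same argument.
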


\begin{proof}
Again by Lemma~\ref{l:septopgrp} it is enough to show continuity at the identity. So suppose that $(m_i)$ is a net in $\cbo$ and $(\phi_i)$ is a net in $\Homeo$ such that $(m_i \phi_i)$ is a net in $G$ converging strongly to the identity. By Lemma~\ref{l:group_is_top_group} the inverse map in $G$ is continuous, and $(m_i \phi_i)^{-1} = \phi_i^{-1}(m_i^{-1}) \phi_i^{-1}$ also converges to the identity. We have to show that $\phi_i$ converges to the identity in $\Homeo$. Let $\omega_i \to \omega \in \Omega$. By applying Lemma~\ref{l:special_convergence} twice we obtain $\phi_i^{-1}(\omega_i) \to \omega$ and $\phi_i(\omega_i) = (\phi_i^{-1})^{-1}(\omega_i) \to \omega$. This is precisely what we have to show by Lemma~\ref{l:top_on_homeo}.
\end{proof}

Hence Assumption \ref{a:a1} is satisfied, and once again we can apply the results of Section~\ref{s:abstract_characterization} and Section~\ref{s:abstract_representations}, in particular Theorem~\ref{t:characterization_mult_subgroups}, Theorem~\ref{t:characterization_rep_into_E} and Proposition \ref{p:rep_isomorphic_isometric_rep}, to obtain the following.

\begin{theorem}\label{t:characterization_compact_groups_in_coo}
 Let $\Omega$ be a locally compact Hausdorff space and let $G \subset \Auts(C_0(\Omega))$ be a compact group. Then there exist a unique compact group $H \subset \Homeo$ and an $m \in \cbo$ such that
\[ G = mHm^{-1}.\]
Conversely, any compact group $H \subset \Homeo$ and $m \in \cbo$ define a compact group $G \subset \Auts(C_0(\Omega))$ by the above equation.
\end{theorem}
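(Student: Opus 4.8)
The plan is simply to invoke the general machinery of Section~\ref{s:abstract_characterization} with $E = C_0(\Omega)$; all the hard work has already been done in the preceding lemmas of this section. First I would recall that in the discussion leading up to Lemma~\ref{l:top_on_homeo} it was established that $\Aut(C_0(\Omega)) = \ZAut(C_0(\Omega)) \cdot \IAut(C_0(\Omega))$, where, under the identification of central operators with multiplication operators, $\ZAut(C_0(\Omega))$ is the group $\cbo$ and $\IAut(C_0(\Omega)) = \Homeo$. In particular $\gp = \Aut(C_0(\Omega))$ and hence $\gps = \Auts(C_0(\Omega))$, so that a compact subgroup $G \subset \Auts(C_0(\Omega))$ is precisely a compact subgroup of $\gps$ in the sense of Section~\ref{s:abstract_characterization}.

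The second ingredient is that $C_0(\Omega)$ satisfies Assumption~\ref{a:a1}: this is exactly the content of Lemma~\ref{l:assumption_verified_for_coo}, which states that for every compact subgroup $G \subset \Auts(C_0(\Omega))$ the homomorphism $p|_G \colon G \to \Homeo$, $m\phi \mapsto \phi$, is continuous. That lemma itself rests on the topological description of $\Homeo$ in Lemma~\ref{l:top_on_homeo} together with the convergence statement in Lemma~\ref{l:special_convergence}, but those are all available.

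With these two facts in hand, Theorem~\ref{t:characterization_mult_subgroups} applies verbatim: for a compact group $G \subset \Auts(C_0(\Omega))$ it produces a unique compact group $H \subset \IAuts(C_0(\Omega)) = \Homeo$ and an $m \in \ZAut(C_0(\Omega)) = \cbo$ with $G = mHm^{-1}$, and conversely, for any compact $H \subset \Homeo$ and $m \in \cbo$ the set $mHm^{-1}$ is a compact subgroup of $\gps = \Auts(C_0(\Omega))$. This is exactly the assertion of the theorem, so the proof is essentially a one-line reduction.

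As for the main obstacle: at this point there is none of substance — the entire difficulty has been pushed into the earlier results, namely the structural description $\Aut(C_0(\Omega)) = \cbo \cdot \Homeo$ (via \cite[Theorem~3.2.10]{meyernieberg}) and the verification of Assumption~\ref{a:a1} in Lemma~\ref{l:assumption_verified_for_coo}. The only point requiring a modicum of care is the bookkeeping of identifications, i.e.\ checking that under $Z(C_0(\Omega)) \cong C_b(\Omega)$ of Corollary~\ref{c:pos_center_determines_center} the subgroup $\ZAut(C_0(\Omega))$ indeed corresponds to $\cbo$, so that the $m$ delivered by Theorem~\ref{t:characterization_mult_subgroups} is literally a multiplication operator by a function bounded away from $0$.
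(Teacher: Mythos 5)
Your proposal is correct and matches the paper's own argument exactly: the paper likewise deduces the theorem directly from Theorem~\ref{t:characterization_mult_subgroups}, using the identifications $\Aut(C_0(\Omega)) = \cbo \cdot \Homeo$ established at the start of Section~\ref{s:cont_functions} and the verification of Assumption~\ref{a:a1} in Lemma~\ref{l:assumption_verified_for_coo}. No gaps; nothing further is needed.
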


\begin{theorem}\label{t:characterization_rep_on_coo}
 Let $\Omega$ be a locally compact Hausdorff space, $G$ a group and $\rho \colon G \to \Auts(C_0(\Omega))$ a positive representation with compact image. Then there exist a unique isometric positive representation $\pi \colon G \to \Homeo$ and an $m \in \cbo$ such that
\[ \rho_s = m \pi_s m^{-1} \quad \forall s \in G. \]
The image of $\pi$ is compact. Conversely, any positive representation $\pi \colon G \to \Homeo$ and $m \in \cbo$ define a positive representation $\rho$ with compact image by the above equation. In this correspondence between $\rho$ and $\pi$, $\rho$ is strongly continuous if and only if $\pi$ is strongly continuous.

Moreover, if $\rho^1 = m_1 \pi^1 m_1^{-1}$ and $\rho^2 = m_2 \pi^2 m_2^{-1}$ are two positive representations with compact image, where $\pi^1$ and $\pi^2$ are isometric positive representations with compact image and $m^1, m^2 \in \cbo$, then $\rho^1$ and $\rho^2$ are order equivalent if and only if $\pi^1$ and $\pi^2$ are isometrically order equivalent.
\end{theorem}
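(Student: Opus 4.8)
The plan is to deduce the statement from the general results of Sections~\ref{s:abstract_characterization} and~\ref{s:abstract_representations}, now that $\coo$ has been shown to fit their hypotheses. First I would record the identifications established in the discussion preceding the theorem: $\Aut(\coo) = \ZAut(\coo) \cdot \IAut(\coo)$ with $\ZAut(\coo) = \cbo$ and $\IAut(\coo) = \Homeo$, so that $\Auts(\coo)$ is precisely the group written $\gps$ in the abstract sections (with $E = \coo$); moreover $\coo$ satisfies Assumption~\ref{a:a1} by Lemma~\ref{l:assumption_verified_for_coo}. In particular $\rho$ is automatically a positive representation of $G$ into $\gps$ with compact image, so Theorem~\ref{t:characterization_rep_into_E} applies verbatim.

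Applying that theorem gives a unique positive representation $\pi \colon G \to \IAuts(\coo) = \Homeo$ with compact image and an $m \in \ZAut(\coo) = \cbo$ with $\rho_s = m\pi_s m^{-1}$ for all $s \in G$; it also yields the equivalence of strong continuity of $\rho$ and $\pi$, and the converse direction that any such pair $(\pi, m)$ produces a positive representation with compact image in $\Auts(\coo)$. The final assertion about order equivalence is then exactly Proposition~\ref{p:rep_isomorphic_isometric_rep} with $E = \coo$: $\rho^1 = m_1\pi^1 m_1^{-1}$ and $\rho^2 = m_2\pi^2 m_2^{-1}$ are order equivalent if and only if $\pi^1$ and $\pi^2$ are isometrically order equivalent.

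Since the substantive work lies in the abstract sections and in verifying Assumption~\ref{a:a1} for $\coo$ (Lemma~\ref{l:assumption_verified_for_coo}), there is essentially no remaining obstacle. The one point deserving a word of care is the identification $\IAut(\coo) = \Homeo$, so that an isometric positive representation into $\Auts(\coo)$ is literally a positive representation into $\Homeo$; this rests on the Banach--Stone-type factorization of lattice automorphisms of $\coo$ via \cite[Theorem~3.2.10]{meyernieberg} recalled at the start of this section, together with the observation that a central lattice automorphism $m \in \cbo$ is isometric precisely when $m = \mathbf{1}$ (cf.\ the analogous argument in Section~\ref{s:abstract_characterization} showing $\ZAut(E) \cap \IAut(E) = \{I\}$).
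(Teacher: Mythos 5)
Your proposal is correct and follows the paper's own route exactly: the paper likewise obtains this theorem by combining the factorization $\Aut(\coo) = \cbo \cdot \Homeo$ and Lemma~\ref{l:assumption_verified_for_coo} (so that Assumption~\ref{a:a1} holds) with Theorem~\ref{t:characterization_rep_into_E} and Proposition~\ref{p:rep_isomorphic_isometric_rep}. No gaps.
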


A part of this result is obtained in \cite[Example~4.1]{dyephillips}, by using group cohomology methods on group actions on the set $\Omega$.

Contrary to the sequence space case, our results do not, in general, lead to a decomposition of positive representations with compact image into band irreducibles. Indeed, if the trivial group acts on $C[0,1]$, then every band in $C[0,1]$ is invariant, but since every nonzero band properly contains another nonzero band, there are no invariant band irreducible bands. However, we can still say something about the various invariant structures of such representations, and for this we need a characterization of these structures in $C_0(\Omega)$.

\begin{lemma}\label{l:structures_in_coo}
 Let $\Omega$ be a locally compact Hausdorff space. Every closed ideal $I \subset \coo$ is of the form
\[ I = I_S = \{ f \in \coo: f(S) = 0 \} \]
for a unique closed $S \subset \Omega$. Hence $S \mapsto I_S$ is an inclusion reversing bijection between the closed subsets of $\Omega$ and the closed ideals of $C_0(\Omega)$. The ideal $I_S$ is a band if and only if $S$ is regularly closed, i.e., $S = \textup{int}(\overline{S})$, and it is a projection band if and only if $S$ is clopen.
\end{lemma}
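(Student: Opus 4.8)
The plan is to prove the lemma in three steps: the bijection between closed ideals and closed subsets of $\Omega$, a computation of the disjoint complements of the ideals $I_S$, and finally the band and projection band refinements. \emph{Closed ideals.} Given a closed ideal $I \subset C_0(\Omega)$, I would take $S := \bigcap_{f \in I} f^{-1}(0)$, the hull of $I$, which is closed and clearly satisfies $I \subseteq I_S$. For the reverse inclusion, fix $f \in I_S$ and $\eps > 0$; the set $K := \{\,|f| \geq \eps\,\}$ is compact, since $f \in C_0(\Omega)$, and disjoint from $S$, so for each $\omega \in K$ there is $g \in I$ with $g(\omega) \neq 0$. Replacing $g$ by $|g| \in I$ and using compactness of $K$ yields a single $g \in I$ with $g \geq c > 0$ on $K$ for some $c$; then for $N$ large enough, $Ng \wedge |f| \in I$ agrees with $|f|$ on $K$ and is $< \eps$ off $K$, so $|f|$ lies within $\eps$ of $I$, and since $I$ is closed we get $|f| \in I$, hence $f \in I$ because $I$ is an ideal. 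Thus $I = I_S$. Injectivity of $S \mapsto I_S$, with the hull map as inverse, follows from Urysohn's lemma in the locally compact Hausdorff setting (\cite[Lemma~1.41]{williams}): for $\omega \notin S$ there is $f \in C_c(\Omega) \subseteq I_S$ with $f(\omega) \neq 0$, so the hull of $I_S$ equals $S$; inclusion-reversal is immediate.

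\emph{Bands.} For $f \in I_S$ the open set $\{f \neq 0\}$ is contained in $\Omega \setminus S$, and conversely every point of $\Omega \setminus S$ lies in $\{f \neq 0\}$ for a suitable $f \in I_S$ (Urysohn again), so $\bigcup_{f \in I_S}\{f \neq 0\} = \Omega \setminus S$. Hence $g \in I_S^\perp$ if and only if the open set $\{g \neq 0\}$ misses $\Omega \setminus S$, i.e.\ $\{g \neq 0\} \subseteq \textup{int}(S)$, i.e.\ $g \in I_{\Omega \setminus \textup{int}(S)}$; thus $I_S^\perp = I_{\Omega \setminus \textup{int}(S)}$, and iterating, $I_S^{\perp\perp} = I_{\overline{\textup{int}(S)}}$. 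Now $I_S^{\perp\perp}$ is a band, and in the Archimedean space $C_0(\Omega)$ every band equals its own double disjoint complement --- a standard fact (cf.\ \cite{posoperators}); concretely, for a band $B$ and $0 \leq x \in B^{\perp\perp}$ the set $\{\,b \in B : 0 \leq b \leq x\,\}$ is upward directed with supremum $x$, whence $x \in B$. Combining these, $I_S$ is a band if and only if $I_S = I_S^{\perp\perp} = I_{\overline{\textup{int}(S)}}$, which by the injectivity of $S \mapsto I_S$ means exactly $S = \overline{\textup{int}(S)}$, i.e.\ $S$ is regularly closed.

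\emph{Projection bands.} A band $B$ is a projection band precisely when $C_0(\Omega) = B \oplus B^\perp$, and since $B \cap B^\perp = \{0\}$ always holds, the question is whether $I_S + I_S^\perp$ is all of $C_0(\Omega)$. If $S$ is clopen then $\textup{int}(S) = S$, the function $\mathbf{1}_S$ is continuous and bounded, and multiplication by it is a band projection with kernel $I_S$ and range $I_{\Omega \setminus S} = I_S^\perp$, so $I_S$ is a projection band. Conversely, if $I_S$ is a projection band it is in particular a band, so $S$ is regularly closed and $I_S^\perp = I_{\Omega \setminus \textup{int}(S)}$; if $\partial S = S \setminus \textup{int}(S)$ were nonempty I would pick $\omega_0 \in \partial S$ and, by Urysohn, $f \in C_c(\Omega)$ with $f(\omega_0) = 1$, and then any decomposition $f = f_1 + f_2$ with $f_1 \in I_S$, $f_2 \in I_S^\perp$ would force $f(\omega_0) = f_1(\omega_0) + f_2(\omega_0) = 0$, a contradiction; hence $\partial S = \emptyset$ and $S$ is clopen.

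The only real obstacle I anticipate is the implication ``$I_S$ a band $\Rightarrow$ $S$ regularly closed'' in the second step: routing it through disjoint complements makes it painless, but a self-contained argument must produce, for a non-regularly-closed $S$, an upward directed family in $I_S$ whose supremum exists in $C_0(\Omega)$ yet lies outside $I_S$, and the delicate point is precisely the \emph{existence} of that supremum in a lattice that need not be metrizable or Dedekind complete --- which is again handled most transparently through the disjoint-complement computation. Everything else reduces to routine applications of Urysohn's lemma together with elementary lattice and pointwise manipulations in $C_0(\Omega)$.
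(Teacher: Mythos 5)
Your proof is correct, but it takes a genuinely different route from the paper: the paper disposes of the lemma in one line, citing the compact case (\cite[Proposition~2.1.9 and Corollary~2.1.10]{meyernieberg} for $C(K)$) and remarking that the proof also works for locally compact $\Omega$, whereas you prove everything directly in the locally compact setting. Your three ingredients --- the hull--kernel argument using compactness of $\{|f|\geq\eps\}$ together with the locally compact Urysohn lemma, the support computation $I_S^\perp=I_{\Omega\setminus\textup{int}(S)}$ and hence $I_S^{\perp\perp}=I_{\overline{\textup{int}(S)}}$, and the fact that in an Archimedean Riesz space every band coincides with its double disjoint complement (which you correctly attribute to \cite{posoperators} and sketch) --- are the standard ones, so in substance you are supplying exactly the verification that the paper delegates to the reader; what your route buys is a self-contained argument whose only external input is the Archimedean band fact, while the paper's route buys brevity at the cost of asking the reader to check that the $C(K)$ proofs transfer. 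Two small points: in the first step ``$f\in C_c(\Omega)\subseteq I_S$'' should read ``$f\in C_c(\Omega)$ with support contained in $\Omega\setminus S$, hence $f\in I_S$''; and your band criterion $S=\overline{\textup{int}(S)}$ is the correct reading of ``regularly closed'' --- the displayed formula $S=\textup{int}(\overline{S})$ in the statement has interior and closure interchanged (as written it would force a closed $S$ to be open, collapsing the band and projection band cases into one), so your version is the one the lemma intends.
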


\begin{proof}
 \cite[Proposition~2.1.9]{meyernieberg} and \cite[Corollary 2.1.10]{meyernieberg} show this statement for $\Omega$ compact, and the proof also works if $\Omega$ is locally compact.
\end{proof}

The next result may serve as in ingredient in the further study of positive representations in $\coo$. If $\pi \colon G \to \Homeo$ is a map, then a subset $S \subset \Omega$ is called $\pi$-invariant if $\pi_s(S) \subset S$ for all $s \in G$.

\begin{prop}\label{p:invariance_coo}
 Let $G$ be a group and let $\rho \colon G \to \Aut(\coo)$ be a positive representation, and define $\pi := p \circ \rho \colon G \to \Homeo$. Then the map $S \mapsto I_S$ from Lemma~\ref{l:structures_in_coo} restricts to a bijection between the $\pi$-invariant closed subsets $S \subset \Omega$ and the $\rho$-invariant closed ideals of $C_0(\Omega)$. By further restriction, this induces a bijection between the $\pi$-invariant regularly closed subsets of $\Omega$ and the $\rho$-invariant bands of $C_0(\Omega)$, and between the $\pi$-invariant clopen subsets of $\Omega$ and the $\rho$-invariant projection bands of $C_0(\Omega)$.
\end{prop}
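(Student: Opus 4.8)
The plan is to reduce everything to Theorem~\ref{t:group_isom_invariant_bands} together with the dictionary of Lemma~\ref{l:structures_in_coo}. First I would observe that, by Theorem~\ref{t:group_isom_invariant_bands} applied to the group $G' := \rho(G) \subset \gp$, the subgroup $p(G') = \pi(G)$ has exactly the same invariant ideals as $\rho(G)$; since bands and projection bands are particular closed ideals, $\rho$ and $\pi$ also have the same invariant bands and the same invariant projection bands. So it suffices to work with the isometric representation $\pi$, i.e.\ with a group acting on $\coo$ purely through homeomorphisms of $\Omega$.

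Next I would make the correspondence explicit. For a homeomorphism $\phi\in\Homeo$ acting by $\phi x = x\circ\phi^{-1}$ and a closed set $S\subset\Omega$, a direct computation shows $\phi(I_S) = I_{\phi(S)}$: indeed $f\in I_S$ iff $f$ vanishes on $S$ iff $f\circ\phi^{-1}$ vanishes on $\phi(S)$. Hence a closed ideal $I_S$ is $\pi$-invariant, meaning $\pi_s(I_S)\subset I_S$ for all $s\in G$, iff $I_{\pi_s(S)}\subset I_S$ for all $s$, iff (using that $S\mapsto I_S$ is inclusion-reversing) $\pi_s(S)\supset S$ for all $s$. Because each $\pi_s$ is invertible with inverse $\pi_{s^{-1}}$ also lying in the group, $\pi_s(S)\supset S$ for all $s\in G$ is equivalent to $\pi_s(S)=S$ for all $s\in G$, which is equivalent to $\pi_s(S)\subset S$ for all $s$, i.e.\ to $S$ being $\pi$-invariant in the sense defined before the proposition. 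This gives the first claimed bijection, between $\pi$-invariant closed subsets and $\rho$-invariant closed ideals.

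Finally I would restrict this bijection. By Lemma~\ref{l:structures_in_coo}, $I_S$ is a band iff $S$ is regularly closed and a projection band iff $S$ is clopen; since a homeomorphism preserves interiors, closures, and clopenness, the property of $S$ being regularly closed (resp.\ clopen) is preserved by each $\pi_s$, so the bijection $S\mapsto I_S$ carries $\pi$-invariant regularly closed sets onto $\rho$-invariant bands and $\pi$-invariant clopen sets onto $\rho$-invariant projection bands. I do not anticipate a serious obstacle here; the only point requiring a little care is the passage from $\pi_s(S)\subset S$ for all $s$ to genuine invariance $\pi_s(S)=S$, which relies on $G$ being a group (not merely a monoid) so that $\pi_{s^{-1}}$ is available — exactly the same remark makes the notion of $\pi$-invariant ideal coincide with $\pi_s(I)\subset I$ for all $s$, keeping the two uses of the word "invariant" consistent.
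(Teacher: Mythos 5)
Your proof is correct and follows essentially the same route as the paper: reduce to the isometric part $\pi$ via Theorem~\ref{t:group_isom_invariant_bands} and then translate invariance through the dictionary of Lemma~\ref{l:structures_in_coo}, with the bands/projection bands cases obtained by restriction. The paper verifies the correspondence by direct point evaluations where you use $\pi_s(I_S)=I_{\pi_s(S)}$ and the order anti-isomorphism (plus the group inverse to upgrade $\pi_s(S)\supset S$ to equality), but this is only a cosmetic difference.
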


\begin{proof}
 By Theorem~\ref{t:group_isom_invariant_bands} the invariant ideals of $\rho(G)$ are the same as the invariant ideals of $\pi(G) \subset \Homeo$, and so we may assume that $\rho = \pi$. Let $S \subset \Omega$ be a $\pi$-invariant closed subset. Then, for all $f \in I_S$, $s \in G$ and $\omega \in \Omega$, $\pi_s f(\omega) = f(\pi_s^{-1}(\omega)) = 0$, and so $I_S$ is a $\pi$-invariant closed ideal of $C_0(\Omega)$.

Conversely, let $I_S$ be a $\pi$-invariant closed ideal of $C_0(\Omega)$ for some closed $S \subset \Omega$. Let $\omega \in \Omega$, then for all $f \in I_S$ and $s \in G$, $f(\pi_s(\omega)) = \pi_s^{-1} f(\omega) = 0$, and so $\pi_s(\omega) \subset S$, hence $S$ is $\pi$-invariant. This shows the statement about closed ideals, and the statements about bands and projection bands follow immediately from Lemma~\ref{l:structures_in_coo}.
\end{proof}

\subsection*{Acknowledgements}
The authors thank Anthony Wickstead for providing Example~\ref{e:automorphisms_is_not_product}.

\comment{

\section{$C_b(\Omega)$}

In this section we will apply the theory of Section \ref{s:cont_functions} to the space $C_b(\Omega)$, where $\Omega$ is a Tychonoff space, using that $C_b(\Omega) \cong C(\beta \Omega)$. For the class of first countable Tychonoff spaces, which is a large class of topological spaces containing all metric spaces, we will obtain characterizations of representations purely in terms of $\Omega$.

If $x \in C_b(\Omega)$, then the image of $x$ in contained in $[-\norm{x}_\infty, \norm{x}_\infty]$ and hence, by the universal property of $\beta \Omega$, $x$ extends uniquely to a continuous function $\overline{x}$ from $\beta \Omega$ to $[-\norm{x}_\infty, \norm{x}_\infty]$. This defines an isometric lattice and algebra isomorphism $C_b(\Omega) \cong C(\beta \Omega)$, so this space satisfies Assumption \ref{a:a1}. Moreover, by the introduction of Section \ref{s:cont_functions}, we have
\[ Z(C_b(\Omega)) \cong Z(C(\beta \Omega)) \cong C(\beta \Omega)) \cong C_b(\Omega), \]
where $m \in C_b(\Omega)$ is identified with the multiplication operator on $C_b(\Omega)$ by $m$, and we have $\IAut(C_b(\Omega)) \cong \bHomeo$, where $\phi \in \bHomeo$ is identified with the operator on $C_b(\Omega)$ satisfying
\[ \phi x (\omega) := \overline{x}(\phi^{-1}(\omega)), \quad \forall \omega \in \Omega. \]
Moreover, if $\Omega$ is first countable, then by Lemma \ref{l:bijection_hom_sets} $\bHomeo \cong \Homeo$, and then $\IAut(C_b(\Omega)) \cong \Homeo$, where $\phi \in \Homeo$ is identified with the much simpler operator sending $x \in C_b(\Omega)$ to $x \circ \phi^{-1}$. We summarize our investigations in the next theorem.

\begin{theorem}
 Let $\Omega$ be a Tychonoff space. Then $C_b(\Omega)$ satisfies Assumptions \ref{a:a1}. Moreover $Z(C_b(\Omega)) \cong C_b(\Omega)$ and $\IAut(C_b(\Omega)) \cong \bHomeo$, and if additionally $\Omega$ is first countable, then $\IAut(C_b(\Omega)) \cong \Homeo$.
\end{theorem}

By Theorems \ref{t:characterization_mult_subgroups} and \ref{t:characterization_rep_into_E} we obtain the following.

\begin{theorem}\label{t:C_b_characterization_compact_groups}
 Let $\Omega$ be a Tychonoff space. Then for every compact subgroup $G \subset \Aut(C_b(\Omega))$, there exist an $m \in C_b(\Omega)^{++}$ and a unique compact subgroup $H \subset \bHomeo$ such that $G = mHm^{-1}$. If $\Omega$ is first countable, then $H \subset \Homeo$.

 Conversely, any $m \in C_b(\Omega)^{++}$ and $H \subset \bHomeo$ (if $\Omega$ is first countable, then $H \subset \Homeo$) define a compact subgroup of $\Aut(C_b(\Omega))$.
\end{theorem}

\begin{theorem}\label{t:characterization_reps_into_C_b}
 Let $\Omega$ be a Tychonoff space and let $\rho \colon G \to \Aut(C_b(\Omega))$ be a strongly continuous positive representation. Then there is a unique strongly continuous isometric positive representation $\pi \colon G \to \bHomeo$ (if $\Omega$ is first countable, then $\pi \colon G \to \Homeo$) and an $m \in C_b(\Omega)^{++}$ such that
\[ \rho_s = m \pi_s m^{-1}, \quad \forall s \in G. \]
 Conversely, for any $\phi$ and $m$ as above, the above equation defines a strongly continuous positive representation $\rho \colon G \to \Aut(C_b(\Omega))$.
\end{theorem}

Now we can apply this to the Banach lattice $\ell^\infty$, which is a normalized symmetric Banach sequence space without order continuous norm, for which we could not verify the second part of Assumption~\ref{a:a1} in Section~\ref{s:sequence_spaces}. We have that $\ell^\infty = C_b(\N)$, and $\N$ is first countable, hence the above two theorems specialize to the two theorems below.

\begin{theorem}
 Let $G$ be a compact subgroup of $\Aut(\ell^\infty)$. Then there exist an $m \in (\ell^\infty)^{++}$ and a unique compact subgroup $H \subset S(\N) = \HomN$ such that $G = mHm^{-1}$.

 Conversely, any $m \in (\ell^\infty)^{++}$ and compact subgroup $H \subset S(\N)$ define a compact subgroup of $\Aut(\ell^\infty)$.
\end{theorem}

\begin{theorem}
 Let $G$ be a compact group and $\rho \colon G \to \Aut(\ell^\infty)$ be a strongly continuous positive representation. Then there is a unique strongly continuous isometric positive representation $\pi \colon G \to S(\N)$ and an $m \in (\ell^\infty)^{++}$ such that
\[ \rho_s = m \pi_s m^{-1}. \]
 Conversely, for any strongly continuous isometric positive representation $\pi \colon G \to S(\N)$ and $m \in (\ell^\infty)^{++}$, the above equation defines a strongly continuous positive representation $\rho \colon G \to \Aut(\ell^\infty)$.
\end{theorem}

As in Section \ref{s:sequence_spaces} we obtain a decomposition of strongly continuous positive representations of compact groups in $\ell^\infty$. As in the order continuous case, bands in $\ell^\infty$ are of the form $\{x \in \ell^\infty: x_n = 0 \quad \forall n \in \N \setminus A\}$ for some $A \subset \N$, which follows from the characterization of bands as disjoint complements. The proof of the next theorem is the same as the proof of \ref{t:compsplitirreducible}, the only difference is that we do not obtain norm convergence since the norm is not order continuous.

\begin{theorem}\label{t:compsplitirreducible_l_infty}
Let $G$ be a compact group and let $\rho \colon G \to \Aut(\ell^\infty)$ be a strongly continuous positive representation. Then all invariant and band irreducible bands are finite dimensional, and $\ell^\infty$ splits into band irreducibles, in the sense that the set of invariant band irreducible bands $\{B_n\}_{n \in \N}$ satisfies
\begin{equation}\label{e:orderconv_infty}
  x = \sum_{n=1}^\infty P_n x \quad \forall x \in E,
\end{equation}
where $P_n \colon E \to B_n$ denotes the band projection, and the series is unconditionally order convergent.
\end{theorem}

}

\bibliographystyle{amsplain}
\bibliography{compactgroups}

\end{document}